\newcommand{\NN}{\mathbb{N}}
\newcommand{\RR}{\mathbb{R}}
\newcommand{\CC}{\mathbb{C}}
\newcommand{\p}{{\raisebox{1.3pt}{{$\scriptscriptstyle\bullet$}}}}
\newcommand{\Tt}{(T(t))_{t\ge0}}
\DeclareMathOperator{\diag}{diag}
\colorlet{change}{red}
\colorlet{draft}{blue}
\newtheorem{thm}{Theorem}[subsection]
\newtheorem{lem}[thm]{Lemma}
\newtheorem{prop}[thm]{Proposition}
\newtheorem{rem}[thm]{Remark}
\begin{document}
\mainmatter     
\title{Journey Through the World \\ of Dynamical Systems on Networks}
\titlerunning{Journey Through the World of Dynamical Systems on Networks}  
%
\author{M. N. Cartier van Dissel\inst{1} \and P. Gora\inst{2} \and
M. Iskrzyński\inst{3} \and M. Kramar Fijav\v{z}\inst{4} \and D. Manea\inst{5} \and A. Mauroy\inst{6} \and I. Naki\'c\inst{7} \and S. Nicaise\inst{8} \and M. B. Paradowski\inst{9} \and A. Puchalska\inst{2} \and G. Rotundo\inst{11} \and E. Sikolya\inst{12}}
\authorrunning{Puchalska et al.} 
%
%
\institute{Complexity Science Hub, Vienna, Austria
\and
Institute of Applied Mathematics and Mechanics,\\ University of Warsaw, Warsaw, Poland
\and
Systems Research Institute, Polish Academy of Sciences, Warsaw, Poland
\and
Faculty of Civil and Geodetic Engineering, University of Ljubljana / Institute of Mathematics, Physics, and~Mechanics,  Ljubljana, Slovenia
\and
Institute of Mathematics, Romanian Academy of Sciences, Bucharest, Romania
\and
Department of Mathematics and Namur Institute for Complex Systems (naXys), University of Namur, Namur, Belgium
\and
Department of Mathematics, Faculty of Science, \\ University of Zagreb, Zagreb, Croatia
\and
Université Polytechnique Hauts-de-France, Valenciennes, France
\and
Institute of Applied Linguistics, University of Warsaw, Warsaw, Poland
\and
Department of Statistical Sciences, Sapienza University of Rome, Rome, Italy
\and
Department of Applied Analysis and Computational Mathematics, \protect\\ E\"otv\"os Lor\'and University, Budapest, Hungary
}

\maketitle              

\begin{abstract}
We present a subjective selection of methods for complex systems analysis ranging from statistical tools through numerical methods based on AI to both linear and non-linear ODEs and PDEs. All the notions apply the network structure and are presented in the context of applied problems to visualise the strengths and drawbacks of the approach. The major aim of capturing such a broad overview is to understand the interrelations between network theories that  seem to be distant from the mathematical perspective.
\keywords{complex systems, graph measures, spectral theory, semigroup theory, conservation laws}
\end{abstract}

 \setcounter{tocdepth}{1}
 \tableofcontents

\section{Introduction}

In the modelling of real-life phenomena, the first challenge that one needs to face is the choice of appropriate mathematical tools. Aiming for accuracy, researchers strive to include essential information about the complex phenomenon while also eliminating details that have only minor effects on the whole process.

Especially in fields where social and biological aspects play a role, individual variability, an abundance of interconnected sub-components of the phenomena and uncertainty about their actual interactions render mathematisation of these branches of science difficult. 
This can lead to a situation where the choice of methods seems to lack a clear basis, with the modeller's area of expertise sometimes playing a larger role than any specific, underlying rationale. The question naturally arises of the interplay between different techniques and lies at the core of mathematical modelling.

In response to this issue, we propose to examine a spectrum of open problems in a wide range of topics such as language studies, finances, economy, ecology, epidemiology, and traffic flow. All these questions are united by the need to characterise complex dynamics at different levels of description. Consequently, a wide range of mathematical methods can be applied ranging from statistical analysis through numerical methods based on AI to both linear and non-linear ODEs and PDEs. All of them, however, developed their own mathematical language to incorporate the notion of a network that permits clustering entities that join certain dynamical properties on the one hand, and stating interdependence on the other. 

The authors do not claim to present an optimal methodology for dealing with the presented open problems, or to give a comprehensive summary of the toolbox for dealing with problems of dynamical systems on networks. 
However, by presenting a range of methods that have proven useful in specific applied problems outlined in Section \ref{sec:problems}, the author aims to highlight the wide array of methods available for application. These are divided based on the type of network that lies at the core of the model and can be assigned to one of four categories (defined formally in Section \ref{sec:prelimi}), namely: (combinatorial) 
graphs (Section \ref{sec:combinat}), graphs with dynamics on vertices (Section \ref{sec:ODEs}), metric graphs (Section \ref{sec:PDEs}) and embedded metric graphs (Section \ref{sec:embed_PDEs}). 

As a result, in this paper we deal with a wide range of models. Many of them offer different ways of space and time discretisation, they also vary in incorporating subsystems into the main structure. Even so, one can encounter clear mathematical bridges that fasten some parts of theories with others. 
This supports the idea that there are notable similarities between network models that should be explored further. Our reflections on this can be found in Section \ref{sec:interdep}, where we summarise discussions held among a wide range of experts in dynamical systems on networks, as part of the COST Action Mathematical models of interacting dynamics on networks.

To the best of our knowledge, this is the first paper that encompasses such a broad perspective on the world of dynamical systems on networks. We invite the reader on a journey where we avoid highways of general theories and choose country roads to look in detail into some very specific aspects of selected diverse topics. We hope that on this trip many network specialists will find the home base of their expertise. This may be a good point of departure. Depending on the reader’s interests, for some topics taking in the view from the window might be enough, but any time one decides on a stopover, there are references that direct from this small prospectus to more detailed mathematical studies of the topic. 

Finally, we hope that doing the entire tour brings us closer to the understanding of what are the major factors that should be taken into account when choosing different mathematical apparatuses. The wider perspective should highlight gains and losses in the choice of different mathematical settings. Finally, including such different methods in one study indicates that our world of networked dynamical systems is indeed a global village and we can draw inspirations and exchange concepts from one another. Buon viaggio!

\section{Problem Descriptions}\label{sec:problems}
Let us commence by presenting examples of applied problems that involve complex phenomena and can be tackled using the theory presented in the further sections. For each problem, we propose one specific model that offers potential solutions, providing at least partial insights into the challenges these problems present. 
\subsection{Ecology Inspiring Economy}

One of the natural examples of networks in life sciences is food webs. They describe the physical foundation of an ecosystem as mass flows between functional groups of species and their exchange with the environment. They can be regarded as akin to economic networks, as both consist of vertices processing and exchanging matter with each other~\cite{hannon1973structure,LEONTIEF1991181}. This enables us to compare properties of these two types of networks and draw inspiration from nature for man-made systems. For example, comparing cycling in these systems is motivated by the goal of a circular economy that aims to make production processes more sustainable~\cite{ghisellini2016review,webster2017circular}. Such attempts at mimicking food webs were postulated in the context of thermodynamic power cycles~\cite{Layton2012}, industrial parks~\cite{Layton2016}, recycling networks~\cite{schwarz1997implementing,Layton2016Bras}, and the whole economy in view of the limits to growth~\cite{FathLimits}. What can we learn from natural network processes? See Section~\ref{eco_net}.

\subsection{The Riddle of Language Acquisition}

Despite recent advances in neural machine translation and large language models facilitating successful communication across and text rendition into other languages with the help of an app, foreign language learning is still a valid goal for large numbers of students. In today’s increasingly interconnected world, more and more are deciding to spend at least part of their academic journey at a foreign university. Alongside being attracted by fresh course offerings, the opportunity to travel, experience a new culture and to gain new friends, as well as scoring points for their future résumés, one of the chief motivations behind study abroad is advancing one’s skills in the language of the target country. However, during their stay abroad not all sojourners make visible linguistic progress, and considerable variation has been evidenced among those that do. It is commonsensical to expect that a crucial role in second language development will be played by students’ interaction networks, but how exactly does participants’ position/centrality affect their progress across different linguistic competencies \cite{Paradowski1,Paradowski2}, what is the impact of the subcommunities/clusters that regularly form in peer cohorts \cite{Cierpich}, and how do students’ language learning trajectories and social behaviours pattern over time \cite{Brodka}? We address these questions in Section \ref{language}.

\subsection{Missing Data in Financial Stability Modelling}
\label{subsec:problem_missing_financial_data}

The Bank for International Settlements and the related database (owned by 60 central banks and maintained by 48 reporting countries) supports central banks in their pursuit of monetary and financial stability. In 2017 it represented 94\% of all cross-border claims of banks and covered around 95\% of the world’s Gross Domestic Product.
The Bank for International Settlements database is incomplete, since only some estimates can be obtained from the non-reporting countries.
Can the available dataset be used for estimating the effects of financial contagions, i.e.\ a cascade failure? Will the result be reliable? Or, wording it differently, do the missing links play a vital role in the contagions dynamic \cite{Cinelli}? Above questions are examined in Section \ref{bank}.

\subsection{Subscribing to a Pension Plan}
\label{subsec:problem_pension_plan}
For the sustainability of the future pensions, pension funds need to propose investments based on stock markets. 
Should a potential subscriber of a specific pension fund have 
clear preferences regarding the reliability of the pension fund when examining such investments? Are the actual investments overall a good choice for the stability of the pension fund system?
The present work contributes to showing how complex network analysis helps to understand the risk involving the whole set of Italian pension funds. In our analysis each  pension fund is a network, and its set of investment is represented through a list of  declared benchmarks. A link joins two nodes of the network anytime they declare a common benchmark. 
In Section \ref{pension_fund}, it is shown how clustering algorithms can help in understanding addressed problem. The results show quite a remarkable overlap among pension funds’ investments. These findings prove that eventual fluctuations of even a few benchmarks may cause serious consequences in the financial wealth of pension funds's investments, so the system is very fragile \cite{GRAMDJBRLevantesi}.
Moreover, information on pension funds's investments based on the stock markets provides very limited indications for the selection of the best pension fund to subscribe to.

\subsection{Network Synchronisation}
\label{subsec:network_sync}
 An important problem in the study of dynamical systems over networks concerns the emergence of coherent behaviour in which the elements of the system follow some dynamical pattern, i.e.\ are synchronised. In the presence of external disturbances of the system, one may want to measure or reduce the impact of disturbances on the synchronisation; the corresponding problem is called almost synchronisation. Specific applications include flights of satellite formations, distributed computing, robotics, surveillance and reconnaissance systems, electric power systems, cooperative attacks involving multiple missiles, intelligent transportation systems, and neural networks. To quantitatively analyse synchronisation properties,in Section \ref{synchron} 
 the following question is addressed: if one is able to disturb/bolster merely one agent in order to maximally disturb/bolster the entire system, which agent should one choose? This question is obviously related to the almost synchronisation property, and in this way one can find an ordering of agents according to their sensitivity to the external disturbances.

 \subsection{Network identification with few measurements}
\label{subsec:network_ident_prob}

 Network identification roughly consists in revealing the structure of a graph from data generated by agents located at the vertices of the graph and connected to other agents according to the network topology. This problem arises in many contexts, such as neuroscience (e.g. capturing the connections between cerebral regions from brain imaging data), genetics (e.g. inferring gene regulatory networks from gene expression data), social sciences (e.g. measuring influences between individuals on social media), and finance (e.g. detecting interconnectedness between financial institutions), to list a few. Those situations usually involve large-scale networks, where measuring all the vertices is out of reach. Moreover, since measurements can possibly be costly or time-consuming, there is a need for methods that only require \emph{local} measurements at a few vertices, but which are still capable of revealing \emph{global} topological properties in large networks. Specific problems include, for instance, inferring the average number of connections between the agents from sparse measurements, detecting a change in the network topology from a remote measurement, and assessing the mutual influence between agents (e.g., community detection). This problem will be tackled in Section \ref{subsec:spectral_net_ident} in the context of spectral network identification. It is also related to the problems presented in Sections \ref{subsec:problem_missing_financial_data} and \ref{subsec:problem_pension_plan}.

\subsection{Tracing Virus Variants in a Fragmented Environment}\label{subsec:virus}
In 2019, due to the outbreak of the SARS-CoV-2 pandemic, the problem of monitoring the spread of infectious diseases changed into a global challenge. In the first years of the pandemic, the major question was how to properly estimate the real number of colonised and infected patients in order to first estimate the basic reproduction number, known as the $\mathcal{R}_0$ parameter and, consequently, to navigate wisely in policy responses. Since December 2020, when the first COVID-19 vaccine was released, more and more attention has been focused on the problem of tracing new variants, in particular those 
that exhibit reduced effectiveness to the vaccine. Modelling the prevalence of patients colonised by the particular variant of a SARS-CoV-2 virus in the local environment allows for either the production of more tailor-made vaccines, similarly to flu vaccines, or at least for the wider availability thereof among different vaccines that should be more efficient for particular groups of variants. One possible form of such model is presented in Section \ref{subsec:virus2}.

\subsection{Optimizing Road Traffic}
Urban road transport poses significant challenges to civilization and economic activity, impacting quality of life and productivity. Addressing these issues requires interdisciplinary approaches and sustainable urban planning strategies to alleviate traffic-related negative effects. One such approach involves optimizing traffic signal settings for given traffic conditions, but this problem has been proven to be NP-hard even for relatively simple traffic models \cite{chen2007algorithms}. Also, even evaluating the quality of different traffic signal settings can be time-consuming, especially on a large scale. 
Answering a need to develop new traffic modelling and optimization methods the new approach based on both fluid dynamics' model and AI tools is proposed in Section \ref{subsec:traffic}.

\section{Preliminary Network Description}\label{sec:prelimi}

\subsection{Combinatorial Digraphs}\label{sec:combi_net}

The basic object of our consideration is a \emph{digraph} (also known as a \emph{directed graph}) $G=(V,E,\Phi^{\pm},W)$ with sets of \emph{vertices} $V=\{v_i\}_{i\in I}$, $I=\{1,\ldots,n\}$ and \emph{edges} $E=\{e_j\}_{j\in J}\subset V\times V$, $J=\{1,\ldots,m\}$. The graph structure is encoded by \emph{in- and out-incidence matrices} $\Phi^{\pm}=(\phi^{\pm}_{i,j})_{i\in I,j\in J}$ defined as
\begin{equation}\label{eq:inci_mtx}
\phi^{+}_{ij}=\left\{\begin{array}{ll}1 &\text{if}\quad \stackrel{e_j}{\rightarrow}v_i,\\
0 &\text{otherwise},\end{array}\right.
\qquad \phi^{-}_{ij}=\left\{\begin{array}{ll}1 &\text{if}\quad v_i\stackrel{e_j}{\rightarrow},\\
0 &\text{otherwise},\end{array}\right.
\end{equation}
and a \emph{weight matrix} $W=\textrm{diag}(w_j)_{j\in J}$, $w_j>0$, $j\in J$. In \eqref{eq:inci_mtx} we apply the notation $\stackrel{e_j}{\rightarrow}v_i$ (resp. $v_i\stackrel{e_j}{\rightarrow}$) for edge $e_j=(\cdot,v_i)$ (resp. $e_j=(v_i,\cdot)$) that ends (resp. starts) in a vertex $v_i$.

If all weights are equal, we call $G$ an \emph{unweighted digraph}. By an \emph{(undirected) graph} we understand a digraph having the property that for any $(v_i,v_j)\in E$, $(v_j,v_i)\in E$ and $W(v_i,v_j)=W(v_j,v_i)$. We also denote by $E_{v_i}^{\pm}$ the sets 
\begin{equation*}
E_{v_i}^+=\{e_j\in E:\,\, \stackrel{e_j}{\rightarrow}v_i\},\qquad E_{v_i}^-=\{e_j\in E:\,\, v_i\stackrel{e_j}{\rightarrow}\},
\end{equation*}
and their weights' counterparts $W_{v_i}^{\pm}$ 
\begin{equation}\label{eq:weights_vi_neigh}
W_{v_i}^{\pm}=\{w_j:\,\, e_j\in E_{v_i}^{\pm}\},\qquad W_{v_i}=W_{v_i}^++W_{v_i}^-.
\end{equation}

If $e_k=(v_i,v_j)\in E$, then $v_i$ is the \emph{tail} and $v_j$ is the \emph{head} of an edge. We say that a vertex $v_i\in V$ is a \emph{source} if $\sum_{j\in J}\Phi^+_{ij}=0$, and a \emph{sink} if $\sum_{j\in J}\Phi^-_{ij}=0$. Furthermore, $e_j$ is a \emph{loop} if there exists $v_i\in V$ being both its head and tail. By a \emph{multiple edge} in the digraph we understand at least two edges all having the head in $v_i\in V$ and the tail in $v_j\in V$.  By the $l$-length \emph{path} in the graph we understand a sequence of edges $e_1,e_2,\ldots,e_l$, $e_i\in E$, $i=1,\ldots,l$; such that for each $i=1,\ldots,l$, there exists $k_i\in I$ such that $\phi_{k_i i}^+=\phi_{k_i i+1}^-$. We say that a digraph is connected if for any two vertices $v_i,v_k\in V$ there exists a path such that its first edge has the tail in $v_i$ and the last edge has the head in $v_k$.

In this paper we restrict our consideration to connected digraphs having a finite number of vertices and edges, and no multiple edges (loops are allowed).

When considering the dynamics in the vertices of digraphs, it is convenient to define operators between vertices. By \emph{weighted in- and out-adjacency matrices}, $\mathcal{A}_w^{\pm}=(a_{ij}^{w\pm})_{i,j\in I}$, $(\mathcal{A}_w^{+})^T=\mathcal{A}_w^{-}$, we understand 
\begin{equation*}
a_{ij}^{w+}=a_{ji}^{w-}=\left\{\begin{array}{ll}w_{k} &\text{if}\quad \exists_{e_k\in E}\,\, v_j\stackrel{e_k}{\rightarrow}v_i\\
0 &\text{otherwise}.\end{array}\right.
\end{equation*}
If we replace $a_{ij}^{w\pm}\neq 0$ with $a_{ij}^{\pm}=1$, we say that $\mathcal{A}^{\pm}=(a_{ij}^{\pm})_{i,j\in I}$ is an \emph{(unweighted) in- and out- adjacency matrix}.

To give the number of edges entering or going out from the vertex and their cumulative weight we define respectively \emph{(unweighted) in- and out-degree matrices} $\mathcal{D}^{\pm}=\text{diag}(\deg^{\pm}(v_i))_{i\in I}$ and \emph{weighted in- and out-degree matrices} $\mathcal{D}_w^{\pm}=\text{diag}(\deg_w^{\pm}(v_i))_{i\in I}$ by
\begin{equation}\label{eq:deg}
\text{deg}^{\pm}(v_i)=\sum_{j\in J}\phi_{ij}^{\pm},\qquad\text{deg}_w^{\pm}(v_i)=\sum_{j\in J}\phi_{ij}^{\pm}w_j.
\end{equation}
 An \emph{undirected unweighted adjacency matrix} $\mathcal{A}=(a_{ij})_{i,j\in I}$ is defined as
 \begin{equation}\label{eq:unwundiradjacency}
     \mathcal{A}=\mathcal{A}^++\mathcal{A}^-,
 \end{equation}
and an \emph{unweighted degree matrix} $\mathcal{D}=\text{diag}(\deg(v_i))_{i\in I}$ as
 \begin{equation}\label{eq:unwdegree}
 \mathcal{D}=\mathcal{D}^++\mathcal{D}^-,
\end{equation}
while an \emph{undirected weighted adjacency matrix} $\mathcal{A}_w=(a_{ij}^{w})_{i,j\in I}$ by
 \begin{equation}\label{eq:wundiradjacency}
 \mathcal{A}_w=\mathcal{A}_w^++\mathcal{A}_w^-.
\end{equation}

Let us also mention two other basic operators that are used to characterise the dynamics associated with vertices, namely the advection and Laplacian matrix. In the case of directed graphs, these can be defined in various ways and consequently one can find many names for the same objects. In this paper we mostly follow the notation from \cite{Mugnolo:14}. Hence, \emph{weighted in- and out-advection matrices} are $\mathcal{N}_{w}^{\pm}=(N_{w,ij}^{\pm})_{i,j\in I}$ such that 
\begin{equation}\label{eq:adv}
\mathcal{N}_w^+=\mathcal{D}_w^--\mathcal{A}_w^+,\qquad 
\mathcal{N}_w^-=\mathcal{D}_w^+-\mathcal{A}_w^-.
\end{equation}
Whereas by a \emph{weighted in- and out-degree Laplacian matrix} $\mathcal{L}_w^{\pm}=(L_{w,ij}^{\pm})_{i,j\in I}$ (also known as an \emph{incoming and outgoing Kirchhoff matrix} and denoted by $\mathcal{K}_w^{\pm}=(K_{w,ij}^{\pm})_{i,j\in I}$) we understand
\begin{equation}\label{eq:laplac}
\mathcal{L}_w^{+}=\mathcal{K}_w^+=\mathcal{D}_w^+-\mathcal{A}_w^+,\qquad \mathcal{L}_w^{-}=\mathcal{K}_w^-=\mathcal{D}_w^--\mathcal{A}_w^-.
\end{equation}
 Finally, by $\mathcal{L}_w^B=(L_{w,ij}^B)_{i,j\in I}$ we denote a \emph{weighted Laplacian-Beltrami matrix}, namely
\begin{eqnarray}\label{eq:laplac-beltr}
\mathcal{L}_w^B=\mathcal{L}_w^++\mathcal{L}_w^-.
\end{eqnarray}
To obtain unweighted counterparts of advection $\mathcal{N}^{\pm}=(N_{ij}^{\pm})_{i,j\in I}$, Laplacian $\mathcal{L}^{\pm}=(L_{ij}^{\pm})_{i,j\in I}$ (known as Kirchhoff $\mathcal{K}^{\pm}=(K_{ij}^{\pm})_{i,j\in I}$) and Laplacian-Beltrami matrices, we consider unweighted degrees $\mathcal{D}^{\pm}$ and unweighted adjacency matrices $\mathcal{A}^{\pm}$ in formulae \eqref{eq:adv}, \eqref{eq:laplac}, as well as unweighted Laplacians in \eqref{eq:laplac-beltr}.

Finally, when considering dynamics defined on digraphs' edges, it is convenient to also define operators between the edges. A \emph{weighted in-adjacency matrix of a line graph of $G$} we call a matrix $\mathcal{B}_w^{\pm}=(b_{w,ij}^{\pm})_{i,j\in J}$ such that 
\begin{eqnarray*}
b_{w,ij}^+&=&\left\{\begin{array}{ll}w_j &\text{if}\quad \exists_{v_k\in V}\,\, \stackrel{e_j}{\rightarrow}v_k\stackrel{e_i}{\rightarrow}\\
0 &\text{otherwise},\end{array}\right.\\
b_{w,ij}^-&=&\left\{\begin{array}{ll}w_i &\text{if}\quad \exists_{v_k\in V}\,\, \stackrel{e_j}{\rightarrow}v_k\stackrel{e_i}{\rightarrow}\\
0 &\text{otherwise}.\end{array}\right.
\end{eqnarray*}
In the case of an \emph{unweighted adjacency matrix} $\mathcal{B}=(b_{ij})_{i,j\in J}$ \emph{of a line graph of $G$}, we consider $b_{ij}=1$ when $b_{ij}^{\pm}\neq 0$, and $b_{ij}=b_{ij}^{\pm}$ otherwise.

\subsection{Metric Graphs}\label{sec:met_net}

The second major group of networks used in the modelling of dynamical systems are metric graphs. This concept is a generalisation of digraph that permits associating each edge with one-dimensional metric space and consequently defining differential operators on the network. 

Using mathematical formalism, let $\mathcal{G}=(G,d)$ be a \emph{metric graph} where $G=(V,E,\Phi^{\pm},W)$ is a digraph defined in Section \ref{sec:combi_net} and 
$d:E\rightarrow d(E)\subset \reflectbox{D}$, for $\reflectbox{D}$ being the set of all real intervals, is a mapping of the form $ e_j \mapsto [0,l_j]$, $l_j>0$ for any $j\in J$. Note that $E=\{e_j\}_{j\in J}\subset V\times V$ is a set of edges of the digraph and $J=\{1,\ldots,m\}$. 

In the metric graph, the direction of an edge $e_j=(v_i,v_k)\in E$, $i,k\in I$, can be associated with the parametrisation of $d(e_j)$. Hence, by abuse of notation, we denote the tail (resp. head) of an edge $e_j$ by $e_j(0)=v_i$ (resp. $e_j(l_j)=v_k$). 

In metric graph models, the state space consists of intervals that are joined together by vertices in which they start and end. 
Finally, we want to associate the network with the space it is embedded into. We say that a digraph $G=(V,E,\Phi^{\pm},W)$ is a \emph{planar digraph} if it can be drawn on the plane in such a way that each edge $e_j$ is of the length $l_j$ consistent with its weight $w_j>0$, $j\in J$, and edges intersect only at their endpoints. For every such network, we can choose its planar representation by defining a mapping 
$P: V \rightarrow \mathbb{R}^2$ such that if the edge $e_j$ connects the vertices $v_i$ and $v_k$, then the Euclidean distance between $P(v_i)$ and $P(v_k)$ coincides with the length $l_j$ of $e_j$. We say that $G_P=(V,E,\Phi^{\pm},W, P)$ is a \emph{planar embedding of a digraph} $G=(V,E,\Phi^{\pm},W)$, while $\mathcal{G}_P=(G_P,d)$ is a \emph{planar embedding of a metric graph}. \newline

Finally, in the whole article we use the notation \begin{equation}\label{eq:Id&1}
I_n\in \mathbb{R}^{n\times n},\quad \mathbbm{1}_n=[1,\ldots,1]^\top\in \mathbb{R}^n,
\end{equation}
for the identity matrix and the vector that consists of all entries equal to one, respectively.

\subsection{Erdős–Rényi, Watts-Strogatz, and Barabási–Albert Networks}

A random network is a network where the edges are a subset sampled randomly from the set of all the possible edges. The independent variables of this model are the number of nodes and the number of edges. Any network with the same number of edges is equally likely to occur. 
The paper {\it Random networks} authored by 
Paul Erdős and Alfréd Rényi in 1959 fixed a milestone in the field \cite{erdHos1960evolution}. 
The random network model uses probabilistic methods for proving the existence of networks satisfying various properties, and it offers a framework for a rigorous definition of what it means for a property to hold for almost all graphs. 

Questions were addressed which are of particular interest in many fields spanning from percolation theory to modern telecommunication networks, such as the presence of a connected component
gathering the vast majority of nodes.
 Contemporaneously with and independently of Erdős and Rényi, Edgar Gilbert introduced the random network model formalized through the number of nodes and the probability of any edge to be present, instead of using the total number of edges.
 
This line of research attracted much interest in the topic. However, the model did not fit datasets, which were showing a diameter of the network relatively small if compared to the number of nodes. So-called 
{\it small world} networks became popular with the experiment of sociologist
Stanley Milgram, who in (1967) \cite{milgram1967small}
was looking for the number of intermediaries that are needed to connect two persons in the U.S. 
The experiment gave rise to the famous statement that any two persons in the U.S. have about  six degrees of separation—meaning that in principle any person can contact any other through at most six intermediaries \cite{travers1977experimental}. 
In 1998, Duncan J. Watts and Steven Strogatz elaborated on a network model 
which shows small-world properties, including short average path lengths and high clustering.
The model depends on a parameter and interpolates between a lattice and a random network.

The interest in network topology has been driven by datasets most when
{\it scale free} networks became popular. A scale-free network is a network where the empirical distribution of the node degrees can be modeled through a power law function. In 2002 Albert-L\'aszl\'o Barab\'asi and R\'eka Albert proposed a model which quickly became very famous \cite{albert2002statistical}. The model builds up a scale-free network through a mechanism of preferential attachment: starting from an initial “seed” network composed by a few nodes and links, new nodes and edges are added. The probability of an edge linking the new node to a specific other node already present in the network depends on the node degree of the latter. Scale-free networks have the property of possessing a few nodes with high degree (hubs) and many nodes with low degree. 
Datasets having a power law distribution include social networks, many kinds of computer networks, including the Internet and the webgraph of the World Wide Web, some financial networks, protein–protein interaction networks, and semantic networks.
Further extensions have considered directed networks, separating analyses of the in-degree from the out-degree.

\section{Combinatorial Digraphs}\label{sec:combinat}

In this we apply section statistical tools to reveal information encoded in network structures. Graph measures are considered  that characterise certain properties of a digraph: node degree, closeness, betweenness, PageRank, reciprocity, or Finn cycling index, to mention a few. Another approach is to extend the deterministic notion of a digraph into the Erdős–Rényi random structure or Barabási–Albert scale-free networks.

\subsection{Networked Ecosystems and Economies}\label{eco_net}
Studying the flow and cycling of matter in networks is crucial for understanding the fundamental physical structure of both ecosystems and economies. Food webs describe how biomass moves between groups of species in ecosystems, arising mostly from feeding relationships. Similarly, input-output tables capture the flow of goods and services between industries and consumers in an economy. Current data-based models capture averaged snapshots of the dynamical systems in question. They are balanced - the sum of flows into equals the sum of flows out of each vertex. Such networks typically correspond to steady states of corresponding dynamical models.

In~\cite{MatIsk_cycling}, 169 weighted food webs and 155 economic networks are compared based on the fraction of total system throughflow that is cycled, known as the Finn Cycling Index (FCI).


In order to learn more about quantification of mass' cycles, let us consider both food webs and economic networks as weighted connected digraphs $G=(V,E,\Phi^{\pm},W)$, see Section \ref{sec:combi_net}, with weights $W$ denoting the flow of matter along edges. In this study the vertices are additionally characterised by external export from each vertex $o=(o_i)_{i\in I}$, where $o_i\geq 0$ and $o$ is non-trivial. 

The amount of matter leaving a node $v_i\in V$, denoted by $h=(h_{i})_{i\in I}$, is assumed to be positive and is given by
 \begin{equation*}
     h_{i}=\sum_{j=1}^{n} a_{ji}^{w+} + o_i>0.
 \end{equation*}
A transition matrix $C=(c_{ij})_{i,j\in I}$ describes the probability that a unit of mass moves between adjacent nodes, namely
 \begin{equation*}
 c_{ij}=\frac{a_{ij}^{w+}}{h_{j}}\quad \text{for any}\,\,i,j\in I.
 \label{eq:transition}
 \end{equation*}
Finally, in order to include indirect flows between nodes, one defines overall transition probability $U=(u_{ij})_{i,j\in I}$ as the power series
\begin{equation*}
U =  \sum_{q=0}^{\infty}C^q=(I_n-C)^{-1}.
\label{eq:matseries}
\end{equation*}
The matrix $U$ is well-defined since $C$ is sub-stochastic, as digraph $G$ is connected.

\emph{The Finn Cycling Index} (FCI) \cite{finn} of a vertex is the probability that flow passing through a
node returns (via direct or indirect flow) to it at some time point, formally:
\begin{equation}
\text{FCI}_i= \frac{u_{ii}-1}{u_{ii}},\quad \text{for any}\,\,i\in I.
\label{eq:singlenode}
\end{equation}
Note that $u_{ii}\leq 1$ from the definition, and therefore FCI is a measure that ranges from 0 to 1. A value of $\textrm{FCI}=0$ indicates that there are no directed cycles in the system, meaning that no flow originating from any node returns to the same node through any path. Conversely, a value of $\textrm{FCI}=\mathbbm{1}_n$ represents a system in which all flows eventually return to their starting node. 

The most basic form of cycling occurs when two nodes exchange mass reciprocally. Network \emph{reciprocity} $r$ measures the fraction of such overlapping bilateral flows among all flows:
\begin{equation}
r= \frac{\sum_{i,j\in I} \min \{u_{ij},u_{ji}\}}{\sum_{k,l\in I} {u_{kl}}}\in[0,1].
\label{eq:reciprocity}
\end{equation}
If all flows are perfectly reciprocated, then $r = 1$. If there are no nodes connected by flows in both directions, $r = 0$. 

It turns out that the FCI of food webs has a geometric mean of 5\%, with much of the cycling attributed to reciprocal flows and the recycling of dead organic matter by detritivores. In contrast, the global economy in 2011 had an FCI of 3.7\%. Furthermore, \cite{MatIsk_cycling} highlights that unweighted network measures used in the past, such as the largest eigenvalue of the adjacency matrix, do not correlate with the actual cycling in weighted networks.

Interestingly, both food webs and economic networks exhibit a strong correlation between FCI and reciprocity, defined as the fraction of flow that is immediately returned between two nodes. This suggests that promoting reciprocity and local collaboration between network components could be a simple strategy to enhance cycling without requiring global knowledge of the system structure. The study emphasizes the importance of relying on weighted network indicators to make sound inferences about real-world systems.

A related work~\cite{MatIsk_foodwebviz} developed an open-source Python package called foodwebviz for visualizing weighted food webs. The package offers five complementary methods: 1) a heat map of flows or diet proportions, 2) an interactive graph for tracing matter flow, 3) an intuitive animation of particles moving between nodes, 4) a summary bar plot of trophic level exchanges, and 5) a heat map of trophic level flows. These tools facilitate accompanying food web publications with clear, aesthetically appealing visualizations that can engage the broader public and be incorporated into education.

Together, these studies contribute to our understanding of mass cycling in complex networks and provide practical tools for their analysis and communication. They highlight the insights gained from weighted network approaches and the potential for local strategies to enhance sustainability in both ecosystems and economies.

\subsection{Peer interactions and Second Language Acquisition during Study Abroad}\label{language}

In today’s increasingly interconnected world, more and more students are deciding to spend at least part of their academic journey at a foreign university. In this section we study factors conductive to second language (L2) development as well as possible barriers. We are also interested in the temporal evolution of students’ language learning trajectories and social behaviour patterns.

A successful line of inquiry permitting answers to these questions is offered by social network analysis 
\cite{Dewey,Hasegawa,Kennedy,Mitchell1,Mitchell2,Paradowski1,Paradowski2,Cierpich}. Particularly fruitful have been computational analyses that go beyond investigations of egocentric networks and instead attempt to reconstruct possibly complete learner graphs. 

From a mathematical perspective, one can define an undirected graph $G=(V,E,\Phi^{\pm},W)$ where vertices denote students while edges indicate social relations between them. A student's communication with a larger number of peers, and in turn those peers' mutual interactions, decreases the distance between the focal student and others and thus increases their centrality. Consequently, if by $d(v_i,v_j)$ we denote the weight of the shortest path between vertices  
$v_i,v_j\in V$, then by closeness centrality of vertex $v_i\in V$, $C_C(v_i)$ we understand
$$C_C(v_i)=\sum_{j\in I}\frac{1}{d(v_i,v_j)}.
$$
Another path-based graph measure is betweenness centrality. We denote by $\sigma_{ij}$ the number of paths between $v_i$ and $v_j$ with lengths equal to the length of a shortest path $d(v_i,v_j)$. Analogously, $\sigma_{ij}(v_k)$ is the number of paths from $\sigma_{ij}$ that pass through $v_k$. Then betweenness centrality reads
$$C_{B}(v_i)=\sum_{j\in I\setminus \{i\}}\sum_{k\in I\setminus \{i\}}\frac{\sigma_{jk}(v_i)}{\sigma_{jk}}.$$
A third standard graph measure that has found application in the context of language learning via peer interaction is the degree centrality of a vertex 
$$C_D(v_i)=\deg(v_i),$$ where $\deg(v_i)$ is defined in \eqref{eq:unwdegree}.

An analysis of a cohort of 39 Erasmus+ students at a German university employed social network analysis 
with cluster detection algorithms to examine language learning patterns. This approach proved effective for understanding how different types of social connections influence language acquisition.

The study revealed several key findings about social interactions and language learning. First, students who formed subcommunities with co-nationals and other speakers of the same native language showed reduced language acquisition. Second, high \emph{closeness centrality}—the degree to which a student was “close” to the others—also had a negative impact on language progress. This suggests that students benefit more from maintaining fewer, but deeper and longer interactions that demand use of richer lexical resources and more advanced syntactic constructions, rather than engaging in many necessarily briefer and more superficial conversations over the same amount of time available \cite{Cierpich}. Additionally, the analysis unearthed heterophily in second-language (L2) German proficiency among close contacts, revealing the understandably negligent impact of this criterion in choosing friends. The study also highlighted the importance of actively using the target language versus merely being exposed to it passively.

Additional research in two iterations of a large-scale course of the Polish language and culture in Warsaw revealed how the centrality metrics best predicting measurable progress are \emph{closeness} and \emph{degree}, with outdegree—see~\eqref{eq:deg}—and \emph{betweenness} additionally explaining subjectively perceived headway in vocabulary and pronunciation, the latter negatively \cite{Paradowski1,Paradowski2}. Degree centrality reflects the number of a focal student’s alters – persons they have been able to build and stay in a relation with. A sub-dimension of degree representing the number of alters the student reported talking to is known as out-degree, while in-degree reflects how many others indicated that student as their interlocutor. In the context of language acquisition, these two metrics might be treated as imperfect proxies of the number of people exposed to a given student’s output, and the number of those from whom the student receives input, respectively. Betweenness in turn stands for the percentage of all the shortest paths in the network that pass through the student. Its negative impact in peer learner networks on the acquisition of second language vocabulary and pronunciation may be due to its reflecting higher exposure to non-native models, possibly resulting from identification with co-nationals. The study also showed background-language-based homophily among students frequently interacting with one another, with clusters in the stochastic blockmodels often aligning with learners’ shared languages (Figure \ref{fig:blockmodel}).

\begin{figure}
    \centering
    \includegraphics[width=.5\linewidth]{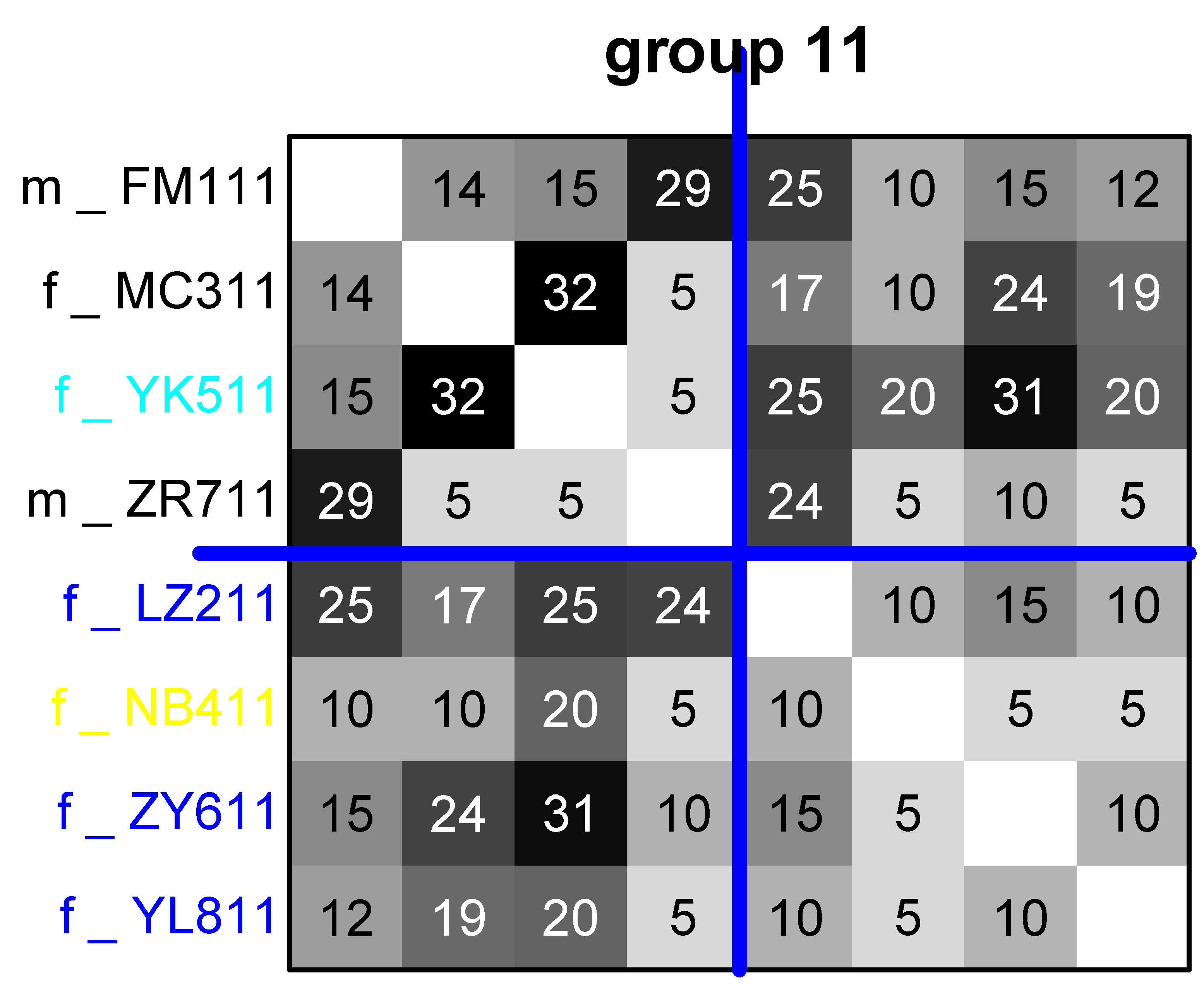}
    \caption{Class of students enrolled in an intensive course of the Polish language and culture showing clear partitioning along two shared background languages, German and Russian.}
    \label{fig:blockmodel}
\end{figure}

The static data have already yielded some knowledge, but what if we are interested in whether observable changes can be noticed in the patterns of students’ social interaction over time? Questions concerning the changing dynamics of second language development and peer communication can be answered employing longitudinal network analyses with several time points/snapshots, a new social network analysis' paradigm in quantitative L2 research. 

One recent project followed a whole group of 41 U.S. students enrolled in an intensive 3-month Arabic program in Amman, measuring their social interaction and progress after every four weeks. The findings revealed relative stability in terms of students’ positions in the network, with a stable tendency to form cliques with peers of the same gender (Figure \ref{fig:Amman}), and the gender homophily strengthening with time \cite{Brodka}. The results also showed that two out of the four significant predictors of objective progress were connected with social interaction: \emph{indegree} and perception of group integration, that presojourn proficiency in Arabic negatively influenced initiating interactions in this language, and that female students were spending considerably more time with their alma mater classmates.

\begin{figure}
    \centering
    \includegraphics[width=1\linewidth]{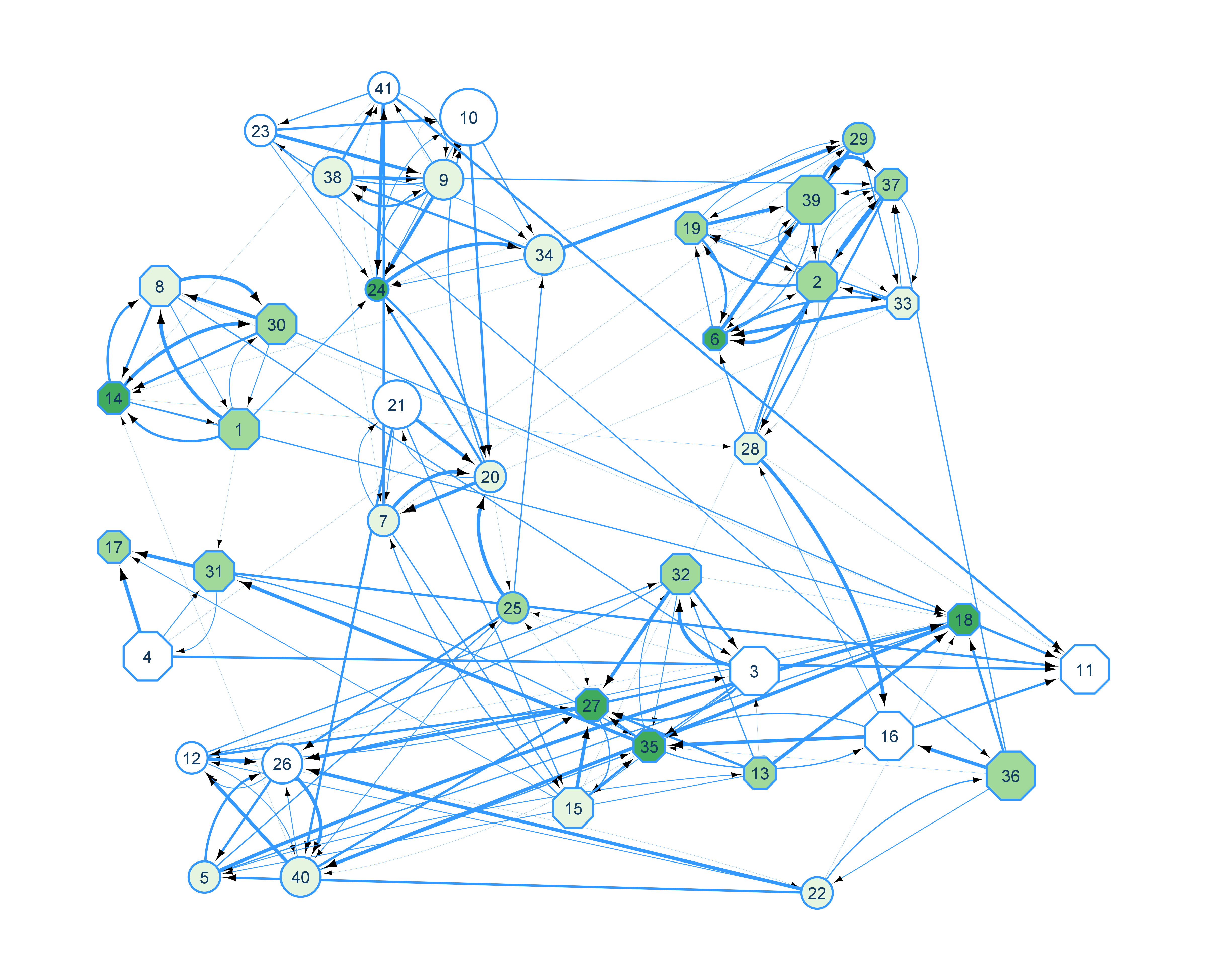}
    \caption{Students’ interactions during the middle month of their study-abroad sojourn in Jordan. Circles denote female, octagons male participants. Node color intensity reflects presojourn Oral Proficiency Interview score in Arabic; node size objectively measured postsojourn progress. Arrows show each student’s top five declared contacts/interlocutors (upper limit set for transparency), with edge thickness reflecting reported frequency/intensity of communication.}
    \label{fig:Amman}
\end{figure}

Network science has also led to epistemological contributions in other fields of linguistics \cite{Jarynowski}, shedding light on phenomena such as the social diffusion of innovation, or the relationships between constructs in educational psychology. For instance, investigations of the spread of neological tags online \cite{Jonak} showed the research benefits of gaining authorized backend access to an entire microblogging site, thus permitting insight into the degree of the “saturation” of the system rather than merely raw numbers of occurrences of expressions of interest, revealing how resistance against the adoption of novelty concentrates around relatively low values, that the vast majority of the users are moderately innovative, and that the spread of the most popular tags may be taking place during most users’ offline time. Language education research has in turn benefited from psychological network analysis \cite{Epskamp}, a variable-centered relation-intensive approach helpful in visualizing relationships between constructs and estimating the relative importance of factors in complex networks of associations. This type of network analysis has so far received relatively little attention from linguists \cite{Freeborn}, but has already been used e.g. to provide insight into the non-trivial relationship between grit and its predictors in online language learning \cite{Jelinska}.


\subsection{Network Identification from the  Bank for International Settlements Database}\label{bank}

The process of decision-making is often based on information available through databases that 
in many cases are incomplete. The
results in \cite{Cinelli}, aim at understanding how bad is the missing information in the Bank for International Settlements  database arising due to non-reporting countries.

Data are used to build up a digraph $G$, where the countries are the nodes, and the weights of the directed links report positive cross-border exposure. A contagion dynamic is set up considering two possible states of a node $v_j$ connected to a node $v_i$: either Credit (node $v_j$ has a positive credit to node $v_i$) or Debt (node $v_j$ has a debt to node $v_i$) and two possible events on the node $v_i$: either Loss (node $v_i$ loses value) or Gain (node $v_i$ gains value).
First, simulations are performed on the  network as it is from the dataset, and then on simulated \emph{random networks} (Erdős–Rényi) and \emph{scale free} networks (Barabási–Albert model).
As the second step, the total number of links in each network is increased in two different ways: either at random, or by adding more links among less-connected nodes ("organized periphery").

The performed analysis clearly shows that incomplete data leads to misleading information about 
the robustness of the network.
An increase in links supposed to be missing increases the magnitude of the contagion; the effect is at its worst in the case of organized periphery. The main conclusion is that the usage of the database as it is may lead 
to potentially biased actions which could not achieve the goal of stabilizing the system.

\subsection{Would you Subscribe to my Pension Plan?}\label{pension_fund}

Pension funds account for quite a remarkable amount of
the Gross Domestic Product, which in the OECD area averages 50.7\%. Pension funds are able to pay the pensions through investments. We focus the analysis on similarity among the investments, according to  the selection of the investments in risky assets from
the financial markets and represented through the declared benchmark. A "declared benchmark" in Italian pension funds is a performance standard or index used to compare the fund’s returns. It serves as a reference point to assess the fund's performance relative to the market or specific investment objectives. This benchmark helps ensure transparency, accountability, and provides a clear measure of whether the fund is meeting its goals.
Although economic relevance is clear, the literature on pension funds is quite limited. Our aim is to understand similarities among the pension funds through tools and 
measures proper for a complex networks approach.

The starting point is building an undirected graph $G$ in which nodes $v_i\in V$, $i\in I$ are the pension funds while edges' weights $w_j$, $j\in J$ measure the overlap among pension funds due to the overlap on the benchmarks, with $$W_V=\sum_{j\in J}\,w_j$$ being the sum of all weights in this network. For that, consider the database reporting the Italian pension funds, enumerated over $I$, and the declared benchmarks, enumerated over $L$. Data  has been provided by MEFOP (a society created by the Italian Ministry of Economics and Finance for the development of the market of pension funds) and refers to the year 2017. The set includes $61$ active sub-funds (belonging to $49$ funds) with their $72$ self-declared benchmarks. Data has been crosschecked through the Bloomberg database \cite{Bloomberg}.  
In detail, the data report information on {\it open pension funds} (that anybody can subscribe to).
Let $H=(h_{il})_{i\in I, l\in L}$ be a matrix reporting the pension funds in the rows and the benchmarks in the columns. $h_{ij}\geq 0$ is the percentage of the investment of the fund $v_i$ in the pension fund for a benchmark $j$.
Let $$\mathcal{A}_w=(a_{ij}^{w})_{i,j\in I}=H\cdot H^T$$ be a weighted undirected adjacency matrix, see \eqref{eq:wundiradjacency}, of a digraph $G$.
Note that the object is well-defined since both rows and columns of $\mathcal{A}_w$ are the pension funds, and entry $a_{ij}^w$ measures the overlap among pension funds $v_i$ and $v_j$. We are going to explore similarities through the application of two clustering methods: the \emph{Louvain method} \cite{Louvain} and $k$\emph{-shells} \cite{k-shells}.

The Louvain method follows a bottom-up approach in which
the nodes forming subnetworks as close as possible to complete subnetworks are gathered together. More formally, let us consider a partition of a graph $G$ into subnetworks $\{G_l:\,\, l\in L\}$ and define a Dirac function $\delta:V\times V \rightarrow \{0,1\}$, such that $\delta(v_i,v_j)=1$ when $v_i,v_j$ are in the same subnetwork. The communities are the elements of the partition $\{G_l:\,\, l\in L\}$ which maximises
the {\it modularity} given by the formula
\begin{equation}\label{eq:modularity}
Q\left(\{G_l:\,\, l\in L\}\right)=\frac{1}{2W_V} \sum_{i,j\in I} \left[a_{ij}^w-\frac{W_{v_i}W_{v_j}}{2W_V}\right]\delta(v_i, v_j),
\end{equation}
where $W_{v_i},W_{v_j}$ are the sums of the weights of the edges attached to nodes $v_i$ and $v_j$ respectively; see \eqref{eq:weights_vi_neigh} for definition.
The formula (\ref{eq:modularity}) represents the  aggregation because it measures to which extent a network is divided into communities compared to a random baseline. In fact, it measures the difference between the actual density of edges within communities (matrix $a_{ij}^w$) and the expected density if edges were placed at random (value $\frac{W_{v_i}W_{v_j}}{2W_V}$), while preserving the node degree distribution. The multiplication by $\delta(v_i, v_j)$ ensures that only edges within the same community $v_i=v_j$  contribute to the modularity score. The  factor $\frac{1}{2W_V}$  normalizes the modularity values to fall within the range of $[-1,1]$.
This captures the idea of community aggregation because it evaluates how tightly connected the nodes within a community are compared to connections across different communities.
The Louvain algorithm proceeds in two phases. First, the Modularity Gain: nodes are moved between communities to maximize the local contribution to modularity. Second, the Community Aggregation: Communities are treated as supernodes, and the process is repeated iteratively.
This hierarchical approach refines the community structure and produces an efficient maximization of the  modularity.

A $k$-shell of order $n$, $n\geq k$ is defined as the set of nodes which have degree at least $n$ after all the nodes with degree at maximum $k-1$ have been removed. The procedure for the identification of the $k$-shell is iterative.
The higher $k$, the higher the connection among the nodes in the same shell.

The Louvain method revealed that the Italian open pension funds are quite strongly connected.
The Louvain method distributes them across all the communities; a subgroup is also in the same community of the {\it contractual} pension funds, which are intended to specific
categories of workers, that can be set up on the basis of collective agreements. The $k$-shell method confirms the results of the Louvain method to a large extent \cite{GRAMDJBRLevantesi}.

The common reference of Italian pension funds to very few benchmarks exposes the entire set of pension funds to be highly sensitive to market fluctuations,
so the system shows to be quite fragile.

\section{ODEs on Combinatorial Graphs}\label{sec:ODEs}

In this section, we consider dynamical systems over networks, also called networked dynamical systems. 
Let us consider a (large) number $n$ of agents in interaction. Each agent indexed by $i$ is described by its internal state $x_i$ which evolves in time both because of an internal dynamics and because of coupling with the other units. A quite general form to describe the evolution of the system is given by the set of equations
\begin{equation}
    \label{eq:gen_ODE}
    \frac{dx_i}{dt} = \overline{}{F}_i(x),
\end{equation}
where $ x = (x_1,\ldots,x_n) $. 
The set of agents and their evolution equations \eqref{eq:gen_ODE} can be seen as a digraph $G=(V,E,\phi^{\pm},W)$, see Subsection \ref{sec:combi_net}, and two vertices $ v_i $ and $ v_j $ are linked by a directed edge from $v_j$ to $v_i$ if the evolution equation of agent $i$ depends on the state $ x_j $ of agent $ j $.

In particular, we focus on dynamical systems over networks that consist of
identical agents (or units) interacting through a diffusive coupling:
    \begin{equation}
            \label{eq:DSN_states}
            \frac{dx_i}{dt}  =  F(x_i) + K(x_i) \sum_{k  =  1}^{n} a^+_{ik} H(Q(x_i) - Q(x_k))  \quad x_i \in \mathbb{R}^p,\,i\in I,
    \end{equation}
with the functions $F : \mathbb{R}^p \to \mathbb{R}^p$ such that $F(0)=0$, $K : \mathbb{R}^p \to \mathbb{R}^{p\times r}$, $H : \mathbb{R}^q \to \mathbb{R}^r$, and $Q : \mathbb{R}^p \to \mathbb{R}^q$ such that $Q(0)=0$. Coefficients $a_{ik}^+$, $i,k\in I$ are the entries of a matrix $\mathcal{A}^+$, namely the unweighted in-degree adjacency matrix of $G$ defined in Section \ref{sec:combi_net}.

\subsection{Network Synchronisation}\label{synchron}

When a large number of agents are coupled through a complex network of interactions, these interactions can lead to cooperative phenomena and emergent properties of the overall dynamical system. An agent can be a model of, for example a ground/underwater vehicle, an aircraft, a satellite, or a smart sensor with microprocessors, while interactions can typically be modelled as information exchange. Problems of this type include formation control, flocking, distributed estimation, and consensus, and appear in different disciplines, including biology, physics, robotics, and control theory and problems, see e.g.\ \cite{Barrat_zbMATH05377449}. 

Here we deal with the output synchronisation problem, which can be described as follows.
Let the states of dynamical systems over networks be described by \cref{eq:DSN_states} with the underlying network structure described by the graph $G $ and with the corresponding agent outputs given by 
\begin{equation}
    \label{eq:outputs}
    y_{i} = Q (x_i).
\end{equation}
The goal of the output synchronisation \cite{lewis2013cooperative} is to reach asymptotic agreement between all agents, i.e.\ 
\begin{equation}
    \label{eq:synchronization}
    \lim_{t \to \infty} (y_i(t) - y_j(t)) = 0, \quad \textrm{for all}\,\,i,j\in I. 
\end{equation}
One can look at this notion as a natural replacement of stability for dynamical systems on networks. Indeed, take 
$ F = 0$, $K = H = Q = 1$ in \cref{eq:DSN_states};
then, the dynamical systems over network corresponds to a so-called network of single-integrators (a network without internal dynamics)
\[ \frac{dx_i}{dt} = 0,\,\, i \in I, \] 
coupled by the linear feedback with outputs being the states of the agents. This system is invariant to translations, i.e.\ if we translate the initial conditions $ x_i(0) $ of all agents by the same amount, the differences between the states will not change. One can easily check this by noting that the vector $ \mathbbm{1}$, see \eqref{eq:Id&1}, is the (right) eigenvector of $\mathcal{L}^+$. The same holds for dynamical systems over networks of double-integrators (second-order control systems of the form $\frac{d^2x}{dt^2}x=u$, $y=x$) and other important systems. This implies that such systems are always unstable and asymptotic stability does not hold. 
As an example, let us consider a general linear dynamical systems over network with second order dynamics in vertices. Let 
\[ \frac{d^2x_i}{dt^2} = - \mathcal{A} \frac{dx_i}{dt} + \mathcal{B} u_i, \quad y_i = \begin{bmatrix} x_i \\ \frac{dx_i}{dt} \end{bmatrix}  \]
where $ (\mathcal{A},\mathcal{B}) $ is a stabilizable pair, and $ u_i $ is the coupling given by 
\begin{equation}
\label{eq:protocol}
    u_i = - \begin{bmatrix} C_1 & C_2 \end{bmatrix}  \sum_{k = 1}^n a^+_{ik} 
\left( \begin{bmatrix} x_k \\ \frac{dx_k}{dt} \end{bmatrix} - \begin{bmatrix} x_i \\ \frac{dx_i}{dt} \end{bmatrix}  \right) .
\end{equation}
Then the system matrix is given by
\[ \mathcal{C} =  \mathcal{I}_{2n} \otimes \begin{bmatrix} 0 & \mathcal{I}_n \\ 0 & - \mathcal{A} \end{bmatrix} - \mathcal{L}^+ \otimes \begin{bmatrix} 0 & 0 \\ - C_1 & - C_2 \end{bmatrix}.  \] 
It is easy to check that the kernel of $ \mathcal{C} $ contains vectors of the form $ \mathbbm{1} \otimes [v \; 0]^\top $ and the system is again invariant to translations. 

In the presence of exogenous disturbances $ w_i $ of the agents, one would like to make the norm of the mapping $ \bar{y} \mapsto y_i - y_j $ smaller than some given tolerance $ \gamma $, where $ \bar{y} = [\bar{y}_1 \, \ldots \, \bar{y}_n] $. Typical choices for norms are $ H_2 $ and $ H_\infty $, as both norms measure the impact of exogenous disturbances given in the form of a stationary stochastic process. To be more precise, the $ H_2 $ norm can be interpreted as the output variance for disturbances described by white noise, while the $ H_{\infty} $ norm measures the worst-case effect a disturbance can make on the system, see \cite{dullerud2013course}. 
In this setting we talk about $ H_2 / H_\infty $\emph{almost output synchronisation}. 

If the underlying graph is undirected and connected, then one can transform the system, without loss of information, to a system with a stable system matrix. Indeed, then $\mathcal{L}^+ = \frac{1}{2}\mathcal{L}$ is symmetric and $ \mathbbm{1} $ spans the kernel of $ \mathcal{L} $. Let the matrix $\mathcal{Y}$ be such that its columns span the subspace $ \{ \mathbbm{1}\}^\perp $ and such that $ \mathcal{Y}^\top \mathcal{Y} = \mathcal{I} $. If we define $ \mathcal{Z} = \mathcal{Y}\otimes \mathcal{I} $ then the substitution $ x = \mathcal{Z} x' $ defines a reduced system with a stable system matrix. From the solution of the reduced system one can recover the state of the original system up to translation invariance. 

In the case of a directed graph $ G $, the situation is more complicated. In general, the vector $ \mathbbm{1} $ is only going to be a \emph{left} eigenvector corresponding to the eigenvalue zero. For example, the in-degree Laplacian of the following simple digraph
\begin{center}
\begin{tikzpicture}[roundnode/.style={circle, draw=black!60, fill=black!5, very thick, minimum size=7mm}]

\node[roundnode](midcircle){1};
\node[roundnode] (leftcircle)  [left=of midcircle] {2};
\node[roundnode] (rightcircle) [right=of midcircle] {3};

\draw[->] (leftcircle.east) -- (midcircle.west);
\draw[->] (rightcircle.west) -- (midcircle.east);

\end{tikzpicture}
\end{center}%
has (algebraic and geometric) multiplicity 2. Hence, a reduction similar to the one described for undirected graphs is not possible as we would lose too much information. Moreover, the corresponding dynamical systems over network does not have the property of output synchronisation. This can be easily seen by investigating a network without internal dynamics. The solution of the corresponding ODE is given by 
$ x(t) = \mathrm{e}^{-\mathcal{L}^+ t} $  where 
\[ \mathcal{L}^+ = \begin{bmatrix} 2 & 0 & 0 \\ - 1 & 0 & 0 \\ - 1 & 0 & 0 \end{bmatrix} , \]
and one obtains $ |x_2(t) - x_1(t)| \to \infty $ as $t \to \infty$ if $ x_2(0) \ne 0 $. The issue with this system is that both “leader” vertices 2 and 3 are influencing the “follower” vertex 1. It is clear that such couplings always lead to pathological systems. So the relevant dynamical systems over networks are those where the graph $ G $ is a \emph{directed forest}, i.e.\ a union of directed trees (subgraphs of graph $ G $ in which there exists a unique path from the root vertex to each node in this subgraph). Obviously, this means that it is sufficient to analyse graphs which contain a spanning directed tree, and indeed this is a standard assumption in the literature.  

Now we will address the question posed in subsection \ref{subsec:network_sync} in the special case of a network of agents modelled as  
an undirected graph with agents given as double-integrators, i.e.\ their dynamics is modeled as $$\frac{d^2 \chi_k}{dt^2} = -T_s \frac{d\chi_k}{dt} + K_s u_k +\omega_k,$$ with the coupling $u_i$ given as in \eqref{eq:protocol}, 
and using the $H_\infty$- norm as the measure of external impact. It was shown in \cite{NAKIC20229110} that one can efficiently calculate the influence of each agent on the system if, e.g.,\ $C_2\leq C_1T_S$. In fact, one can infer that the largest effect on the system will have constant disturbances and that  the greatest impact will be on the zero frequency component of the output.
Similar analyses can be made for general systems of identical linear agents. 

In \cite{NAKIC20229110}, an experimental validation of the theoretical results was performed. The experiment involved four nano quadrotors. As these drones are equipped with internal controllers that stabilize them, it is realistic to model them as double-integrators. The system identification yields parameters that satisfy $C_2\leq C_1T_S$ and the plots of the spectrum of the agents’ distance from the equilibrium (consensus) manifold given in Figure \ref{fig:amplitude_spectra} provide clear evidence that constant inputs affect this system more than signals at other frequencies and that the greatest impact is on the zero frequency component of the output.
\begin{figure}
        \centering
        \includegraphics[width=0.8\linewidth]{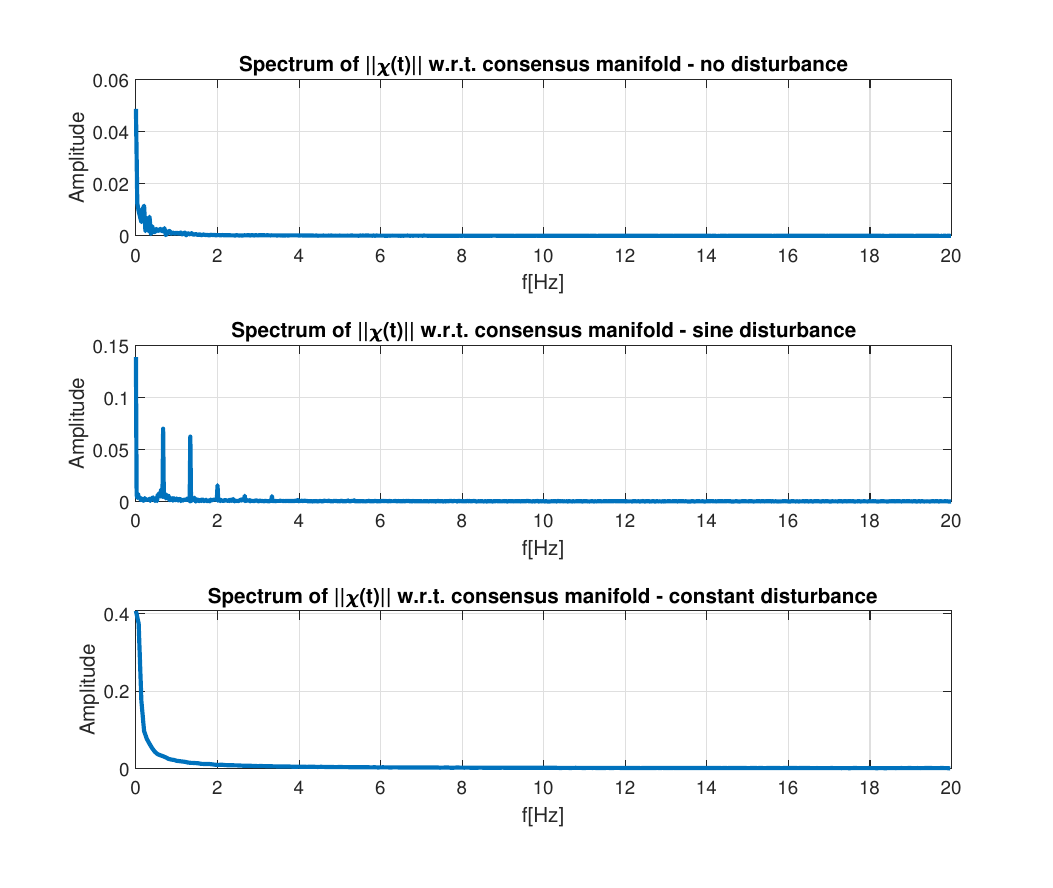}
        \caption{Amplitude spectra of signals of interest obtained via FFT.}
        \label{fig:amplitude_spectra}
    \end{figure}

\subsection{Spectral Network Identification}
\label{subsec:spectral_net_ident}

Network identification with few measurements (see Section \ref{subsec:network_ident_prob}) can be tackled through spectral methods. On the one hand, it is well-known from spectral graph theory that the eigenvalues of Laplacian matrices $\mathcal{L}^{\pm}$, see \eqref{eq:laplac}, (hereafter called \textit{Laplacian eigenvalues}) provide meaningful information on the topological structure of the graph, such as the mean vertex degree \cite{Chung1997}. Moreover, changes in the spectrum of the Laplacian matrix reveal changes in the graph structure, and communities can also be inferred from the Laplacian eigenvectors (see e.g. \cite{von2007tutorial}). On the other hand, the spectrum of the so-called Koopman generator
$$A_G:D(A_G) \to \mathcal{F},\quad A_Gf=\bar{F}_G\cdot\nabla f,$$
where $\mathcal{F}$ is a Banach space and $\bar{F}_G$ is a vector field describing a dynamical system over a network described by the graph $G$, can be estimated from data generated by the dynamics (see e.g. the Dynamic Mode Decomposition method \cite{Tu2014}). Also, it is noticeable that the data can be  measured at a few vertices (possibly one). Therefore, providing that the Laplacian spectrum can be retrieved from the Koopman operator spectrum, the global structure of the graph can be inferred through local measurements of the dynamics. 
\begin{rem}
In well-chosen spaces (depending on the dynamics), the operator $A_G$ is the infinitesimal generator of the strongly continuous semigroup of Koopman operators $(K^t)_{t\geq 0}: \mathcal{F} \to \mathcal{F}$ defined by the composition $(K^t)f(\cdot)= f \circ \varphi(t,\cdot)$ where $\varphi: \mathbb{R}^+ \times X \to X$ is the flow map generated by the dynamics \cite{Budisic2012,Koopman1931}. More details on semigroup theory can be found in Section \ref{mg_ss1}.
\end{rem}

From the above, it is clear that network identification with few measurements is equivalent to the \emph{spectral network identification} problem, defined as follows. Considering the set of all graphs $G$ with a fixed number of vertices, we aim at studying the existence of a correspondence between the set $\mathcal{S}_A$ of point spectra, $\sigma_p(A_G)$ of the Koopman generators $A_G$, and the set $\mathcal{S}_{\mathcal{L}^+}$ of spectra $\sigma(\mathcal{L}^+)$ of the in-degree Laplacian matrix $\mathcal{L}^+$ of the graphs $G$.


We now focus on the specific dynamical systems over network described by \eqref{eq:DSN_states}. The Jacobian matrix $\mathcal{J} \in \mathbb{R}^{np \times np}$ of the vector field at the origin is given by
\begin{equation*}
\mathcal{J} = \mathcal{I}_n \otimes \mathcal{B} - \mathcal{L}^+ \otimes \mathcal{D}
\end{equation*}
where $\mathcal{I}_n$ is the identity matrix, see \eqref{eq:Id&1}, $\mathcal{B}=\mathcal{J}_F \in \mathbb{R}^{p \times p}$ is the Jacobian matrix of $F$, and $\mathcal{D}=K(0) \mathcal{J}_H(0) \mathcal{J}_Q(0)$, with $\mathcal{J}_H \in \mathbb{R}^{r \times q}$ and $\mathcal{J}_Q \in \mathbb{R}^{q \times p}$ the Jacobian matrices of $H$ and $Q$, respectively. We assume that $\mathcal{J}$ has non-resonant eigenvalues $\mu_j$ ($j=1,\dots,np$) with $\Re\{\mu_j\}<0$, so that the origin is an asymptotically stable synchronised equilibrium. In this case, the point spectrum of the Koopman generator $A_G$ defined in the Hardy space $\mathcal{F}=H^2(\mathbb{D}^{np})$, where $\mathbb{D}^{np} \subset \mathbb{C}^{np}$ is a polydisk of a small enough radius, is given by 
\begin{equation*}
\sigma(A_G)=\left\{\sum_{j=1}^n \alpha_j \mu_j,\, \alpha\in \mathbb{N}^n\right\} \supset \sigma(\mathcal{J}).
\end{equation*}
It follows that there is a one-to-one correspondence between the set of spectra $\mathcal{S}_A$ and the set $\mathcal{S}_\mathcal{J}$ of spectra $\sigma(\mathcal{J})$, so that the problem boils down to an algebraic characterisation of the correspondence between the sets $\mathcal{S}_\mathcal{J}$ and $\mathcal{S}_{\mathcal{L}^+}$. We have the following result based on the spectral moments of the matrix $\mathcal{D}$ \cite{Gulina2022}.
\begin{prop}
Consider two in-degree Laplacian operators $\mathcal{L}_1^+$, $\mathcal{L}_2^+$ associated with two digraphs $G_1, G_1$. If $\mathrm{tr}(\mathcal{D}^k) \neq 0$ for all $k \in \{1,\dots,n\}$, then there is a one-to-one mapping between $\mathcal{S}_\mathcal{J}$ and $\mathcal{S}_{\mathcal{L}^+}$, that is
\begin{equation*}
                \sigma(\mathcal{J}_1) = \sigma(\mathcal{J}_2) \Leftrightarrow \sigma(\mathcal{L}^+_1) = \sigma(\mathcal{L}^+_2)
\end{equation*}
with $\mathcal{J}_1 = \mathcal{I}_n \otimes \mathcal{B} - \mathcal{L}^+_1 \otimes \mathcal{B}$ and $\mathcal{J}_2 = \mathcal{I}_n \otimes \mathcal{B} - \mathcal{L}^+_2 \otimes \mathcal{D}$.
\end{prop}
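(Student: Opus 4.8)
The statement is a purely linear-algebraic claim about the matrices $\mathcal{J}_i=\mathcal{I}_n\otimes\mathcal{B}-\mathcal{L}^+_i\otimes\mathcal{D}$, so the plan is to first reduce $\sigma(\mathcal{J})$ to the spectrum of $\mathcal{L}^+$ by block-triangularisation, and then to invert this reduction with the help of trace identities. I would start from a Schur decomposition $\mathcal{L}^+=UTU^{\ast}$, with $T$ upper triangular carrying the Laplacian eigenvalues $\lambda_1,\dots,\lambda_n$ on its diagonal; this is available even though $\mathcal{L}^+$ need not be diagonalisable and may have complex eigenvalues. Conjugating $\mathcal{J}$ by the unitary $U\otimes\mathcal{I}_p$ turns it into the block upper-triangular matrix $\mathcal{I}_n\otimes\mathcal{B}-T\otimes\mathcal{D}$ with diagonal blocks $\mathcal{B}-\lambda_i\mathcal{D}$. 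Hence $\chi_{\mathcal{J}}(x)=\prod_{i=1}^n\chi_{\mathcal{B}-\lambda_i\mathcal{D}}(x)$ and $\operatorname{tr}(\mathcal{J}^k)=\sum_{i=1}^n\operatorname{tr}\bigl((\mathcal{B}-\lambda_i\mathcal{D})^k\bigr)$ for every $k\ge1$. The implication $\sigma(\mathcal{L}^+_1)=\sigma(\mathcal{L}^+_2)\Rightarrow\sigma(\mathcal{J}_1)=\sigma(\mathcal{J}_2)$ (spectra counted with multiplicity) is then immediate from the product formula for $\chi_{\mathcal{J}}$, since $\sigma(\mathcal{J})$ is the union, with multiplicity, of the graph-independent spectra $\sigma(\mathcal{B}-\lambda\mathcal{D})$ taken over $\lambda\in\sigma(\mathcal{L}^+)$.

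For the converse I would pass to power sums. Expanding $(\mathcal{B}-\lambda\mathcal{D})^k$ shows that $q_k(\lambda):=\operatorname{tr}\bigl((\mathcal{B}-\lambda\mathcal{D})^k\bigr)$ is a polynomial $q_k(\lambda)=\sum_{j=0}^k c_{k,j}\lambda^j$ whose coefficients depend only on $\mathcal{B}$ and $\mathcal{D}$ and whose leading coefficient is $c_{k,k}=(-1)^k\operatorname{tr}(\mathcal{D}^k)$. Setting $s_j=\sum_{i=1}^n\lambda_i^j$, with $s_0=n$, the trace identity becomes $\operatorname{tr}(\mathcal{J}^k)=\sum_{j=0}^k c_{k,j}\,s_j$ for $k=1,\dots,n$. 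Under the hypothesis $\operatorname{tr}(\mathcal{D}^k)\ne0$ for $k=1,\dots,n$, the $n\times n$ matrix $\bigl(c_{k,j}\bigr)_{1\le j\le k\le n}$ is lower triangular with nonzero diagonal entries, hence invertible, so the numbers $\operatorname{tr}(\mathcal{J}^k)$, $k=1,\dots,n$, determine $s_1,\dots,s_n$ through a fixed, graph-independent linear change of variables. Since $\sigma(\mathcal{J}_1)=\sigma(\mathcal{J}_2)$ forces $\operatorname{tr}(\mathcal{J}_1^k)=\operatorname{tr}(\mathcal{J}_2^k)$ for every $k$, the power sums of the two Laplacian spectra agree for $k=1,\dots,n$, and Newton's identities then recover the elementary symmetric functions of the $\lambda_i$, i.e.\ the characteristic polynomial of $\mathcal{L}^+$, whence $\sigma(\mathcal{L}^+_1)=\sigma(\mathcal{L}^+_2)$.

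The crux is the converse, and in particular seeing why $\operatorname{tr}(\mathcal{D}^k)\ne0$ is exactly the right hypothesis: one must check that the passage from the $n$ traces $\operatorname{tr}(\mathcal{J}^k)$ to the $n$ power sums $s_k$ is governed by a triangular system whose diagonal entries are precisely $\pm\operatorname{tr}(\mathcal{D}^k)$, and that the off-diagonal data of this system is independent of the graph, so that it is literally the same system for $G_1$ and $G_2$. Everything else — the Schur reduction, the product formula for $\chi_{\mathcal{J}}$, and the closing appeal to Newton's identities — is routine; the only bookkeeping point is that the equivalence is cleanest for spectra counted with multiplicity (equivalently, for characteristic polynomials), the set-level statement being a formal consequence.
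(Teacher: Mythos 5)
The paper itself does not prove this proposition: it is imported from \cite{Gulina2022} with only the hint that it rests on the spectral moments of $\mathcal{D}$, so the closest in-paper argument is the Jordan-form block-triangularisation used for Lemma \ref{lem:spec_J}. Your reconstruction is correct and follows exactly that hinted route. Your Schur conjugation by $U\otimes\mathcal{I}_p$ is interchangeable with the paper's Jordan conjugation by $\mathcal{P}\otimes\mathcal{I}_p$ and gives the same product and trace formulas, $\chi_{\mathcal{J}}=\prod_i\chi_{\mathcal{B}-\lambda_i\mathcal{D}}$ and $\operatorname{tr}(\mathcal{J}^k)=\sum_i\operatorname{tr}\bigl((\mathcal{B}-\lambda_i\mathcal{D})^k\bigr)$; and your key observation — that $\lambda\mapsto\operatorname{tr}\bigl((\mathcal{B}-\lambda\mathcal{D})^k\bigr)$ is a polynomial with graph-independent coefficients and leading coefficient $(-1)^k\operatorname{tr}(\mathcal{D}^k)$, so that passing from $\operatorname{tr}(\mathcal{J}^k)$, $k=1,\dots,n$, to the power sums $s_1,\dots,s_n$ of the Laplacian eigenvalues is a fixed triangular system whose diagonal is exactly the moments of $\mathcal{D}$ — is precisely why the hypothesis $\operatorname{tr}(\mathcal{D}^k)\neq 0$ enters; Newton's identities then finish the converse, and you also silently correct the statement's typo $\mathcal{L}^+_1\otimes\mathcal{B}$ to $\mathcal{L}^+_1\otimes\mathcal{D}$, which is the intended reading.

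One caveat: what your converse actually establishes is the multiset (characteristic-polynomial) version, because equality of eigenvalue \emph{sets} alone does not force $\operatorname{tr}(\mathcal{J}_1^k)=\operatorname{tr}(\mathcal{J}_2^k)$, and conversely equality of the Laplacian multisets is stronger than equality of the sets. So your closing claim that the set-level statement is a formal consequence is too quick: if $\sigma(\cdot)$ were read as a bare set, the converse would require a separate argument. In the convention of the cited source, where spectra are manipulated through their moments (hence with multiplicities), this is immaterial, but it is worth stating the multiplicity convention explicitly rather than dismissing it as bookkeeping.
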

Moreover, the spectrum of $J$ can be easily expressed in terms of Laplacian eigenvalues.
\begin{lem}
\label{lem:spec_J}
The spectrum of $\mathcal{J}=\mathcal{I}_n \otimes \mathcal{B} - \mathcal{L}^+ \otimes \mathcal{D}$ is given by
\begin{equation*}
\sigma(\mathcal{J})  =\bigcup_{\lambda \in \sigma(\mathcal L^+)} \sigma(\mathcal{B}-\lambda \mathcal{D}).
\end{equation*}
\end{lem}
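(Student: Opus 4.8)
The plan is to reduce $\mathcal{J}$ to block upper triangular form by combining the Schur decomposition of $\mathcal{L}^+$ with the mixed-product rule for Kronecker products, and then read off the spectrum from the diagonal blocks.

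First I would fix a unitary matrix $U \in \mathbb{C}^{n\times n}$ such that $U^*\mathcal{L}^+ U = T$ is upper triangular, with diagonal entries $T_{11},\dots,T_{nn}$ an enumeration (with multiplicities) of $\sigma(\mathcal{L}^+)$; such $U$ exists by Schur triangularisation, and one should use the triangular rather than a diagonal form because for a directed graph $\mathcal{L}^+$ need not be diagonalisable. Setting $\mathcal{U} = U \otimes \mathcal{I}_p$, which is invertible with $\mathcal{U}^{-1} = U^* \otimes \mathcal{I}_p$, and using $(A\otimes B)(C\otimes D) = (AC)\otimes(BD)$ together with $U^*U = \mathcal{I}_n$, one computes
\[
\mathcal{U}^{-1}\mathcal{J}\,\mathcal{U} = (U^*\otimes\mathcal{I}_p)(\mathcal{I}_n\otimes\mathcal{B})(U\otimes\mathcal{I}_p) - (U^*\otimes\mathcal{I}_p)(\mathcal{L}^+\otimes\mathcal{D})(U\otimes\mathcal{I}_p) = \mathcal{I}_n\otimes\mathcal{B} - T\otimes\mathcal{D}.
\]

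Next I would observe that $\mathcal{I}_n\otimes\mathcal{B} - T\otimes\mathcal{D}$, viewed as an $n\times n$ array of $p\times p$ blocks, is block upper triangular because $T$ is, and that its $k$-th diagonal block equals $\mathcal{B} - T_{kk}\mathcal{D}$. Since the spectrum of a block triangular matrix is the union of the spectra of its diagonal blocks, and the spectrum is invariant under the similarity transformation by $\mathcal{U}$,
\[
\sigma(\mathcal{J}) = \sigma(\mathcal{I}_n\otimes\mathcal{B} - T\otimes\mathcal{D}) = \bigcup_{k=1}^{n}\sigma(\mathcal{B} - T_{kk}\mathcal{D}) = \bigcup_{\lambda\in\sigma(\mathcal{L}^+)}\sigma(\mathcal{B} - \lambda\mathcal{D}),
\]
where the last step simply collapses repeated diagonal entries of $T$ to distinct eigenvalues of $\mathcal{L}^+$.

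There is no serious obstacle here: the result is essentially the standard spectral identity for pencils of Kronecker-sum type. The only point requiring care is the non-diagonalisability of $\mathcal{L}^+$ in the directed case, which forces the use of the Schur form; once the conjugated matrix is exhibited as block triangular, the elementary fact about spectra of block triangular matrices finishes the argument.
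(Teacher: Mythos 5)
Your proof is correct and follows essentially the same route as the paper: conjugate $\mathcal{J}$ by $P\otimes\mathcal{I}_p$, where $P$ triangularises $\mathcal{L}^+$, observe that the resulting matrix is block triangular with diagonal blocks $\mathcal{B}-\lambda\mathcal{D}$ for $\lambda\in\sigma(\mathcal{L}^+)$, and conclude by the fact that the spectrum of a block triangular matrix is the union of the spectra of its diagonal blocks. The only difference is that the paper triangularises via the Jordan form of $\mathcal{L}^+$ (yielding a block-diagonal matrix whose blocks $\mathcal{I}_{s_\lambda}\otimes\mathcal{B}-\mathcal{J}_\lambda\otimes\mathcal{D}$ are themselves block triangular), whereas you use the Schur form; both correctly handle the possible non-diagonalisability of $\mathcal{L}^+$, and your unitary variant is, if anything, marginally cleaner since it avoids generalised eigenvectors.
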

\begin{proof}
Let $\mathcal{P}$ be the matrix constructed with (generalised) eigenvectors of $\mathcal{L}^+$, such that $\mathcal{P}^{-1}\mathcal{L}^+\mathcal{P}$ has a Jordan form. Then the matrix $\tilde{\mathcal{J}}=(\mathcal{P} \otimes \mathcal{I}_p)^{-1} J (\mathcal{P} \otimes \mathcal{I}_p)=\textrm{diag}(\mathcal{M}_1,\dots,\mathcal{M}_r)$ is block-diagonal, with the blocks $\mathcal{M}_j=\mathcal{I}_{s_\lambda} \otimes \mathcal{B}-\mathcal{J}_{\lambda} \otimes \mathcal{D}$, where $\mathcal{J}_\lambda$ is a Jordan block associated with an eigenvalue $\lambda \in \sigma(\mathcal{L}^+)$, and $s_\lambda$ is its dimension. It is clear that the matrices $\mathcal{M}_j$ are block triangular, with diagonal blocks $\mathcal{B}- \lambda \mathcal{D}$. This implies that $\sigma(M_j) = \sigma(\mathcal{B}- \lambda \mathcal{D})$ and the result follows from the fact that $\sigma(\mathcal{J})=\sigma(\tilde{\mathcal{J}})=\cup_{j=1}^r \sigma(\mathcal{M}_j)$. 
\end{proof}
It follows from Lemma \ref{lem:spec_J} that the Laplacian eigenvalues $\lambda$ can be obtained from the eigenvalues $\mu \in \sigma(\mathcal{J}) \subset \sigma_p(\mathcal{A})$ by solving the generalised eigenvalue problem
\begin{equation*}
(\mathcal{B}-\mu \mathcal{I}_n)v=\lambda \mathcal{D} v.
\end{equation*}
For a given eigenvalue $\mu \in \sigma(\mathcal{J})$, there are as many solutions $\lambda$ as the rank of $\mathcal{D}$, but only one solution is a Laplacian eigenvalue. Specific techniques have to be developed to circumvent this issue (see \cite{Gulina2022} for more details).

When the matrix $\mathcal{D}$ has rank one (e.g. in the case $r=1$ or $q=1$), we have the following additional result \cite{Mauroy2017}.
\begin{prop}
Suppose that $\mathrm{rank}(\mathcal{D})=1$ so that there exists $v,w \in \mathbb{R}^m$ with $\mathcal{D}=v w^T$. If $\mathrm{rank}([v,\mathcal{B}v,\dots,\mathcal{B}^{p-1}v])=\mathrm{rank}([w,\mathcal{B}^Tw,\dots,(\mathcal{B}^{p-1})^T w])=p$, then there is a one-to-one mapping between $\mathcal{S}_J$ and $\mathcal{S}_{\mathcal{L}^+}$. Moreover,
\begin{equation*}
\sigma(\mathcal{L}) = \{0\} \cup \left\{ 1/w^T(\mathcal{B}-\mu \mathcal{I}_n)^{-1}v,\, \mu \in \sigma(\mathcal{J})\setminus \sigma(\mathcal{B})\right\}.
\end{equation*}
\end{prop}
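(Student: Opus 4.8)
The plan is to reduce everything to Lemma~\ref{lem:spec_J}, which already tells us that $\sigma(\mathcal{J})=\bigcup_{\lambda\in\sigma(\mathcal{L}^+)}\sigma(\mathcal{B}-\lambda\mathcal{D})$, and then to make the correspondence $\lambda\leftrightarrow\mu$ explicit when $\mathcal{D}=vw^T$ has rank one. First I would fix an eigenvalue $\mu\in\sigma(\mathcal{J})$ with $\mu\notin\sigma(\mathcal{B})$ and, using Lemma~\ref{lem:spec_J}, pick $\lambda\in\sigma(\mathcal{L}^+)$ with $\mu\in\sigma(\mathcal{B}-\lambda\mathcal{D})=\sigma(\mathcal{B}-\lambda vw^T)$. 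The condition $\mu\notin\sigma(\mathcal{B})$ lets me write the eigenvalue equation $(\mathcal{B}-\lambda vw^T - \mu\mathcal{I})u = 0$ as $u = \lambda(\mathcal{B}-\mu\mathcal{I})^{-1}v\,(w^Tu)$; since $u\neq0$ forces $w^Tu\neq0$, I can left-multiply by $w^T$ and cancel $w^Tu$ to obtain the scalar relation $1 = \lambda\, w^T(\mathcal{B}-\mu\mathcal{I})^{-1}v$, i.e.\ $\lambda = 1/\big(w^T(\mathcal{B}-\mu\mathcal{I})^{-1}v\big)$. This is the rank-one Sherman--Morrison mechanism and it yields the displayed formula for all nonzero Laplacian eigenvalues; the eigenvalue $0\in\sigma(\mathcal{L}^+)$ is always present because $\mathbbm{1}$ lies in the (left) kernel of $\mathcal{L}^+$, hence the extra $\{0\}$ in the union.

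The key remaining point is the \emph{one-to-one} claim, which is where the controllability/observability-type rank hypotheses on $(\mathcal{B},v)$ and $(\mathcal{B}^T,w)$ enter. Equivalently I need: (i) distinct Laplacian eigenvalues $\lambda$ produce disjoint spectra $\sigma(\mathcal{B}-\lambda vw^T)$ (so no collision loses information), and (ii) the map $\mu\mapsto 1/(w^T(\mathcal{B}-\mu\mathcal{I})^{-1}v)$ recovers $\lambda$ unambiguously. The natural tool is the scalar transfer function $g(\mu) = w^T(\mathcal{B}-\mu\mathcal{I})^{-1}v$, a strictly proper rational function of degree exactly $p$ precisely because $\mathrm{rank}([v,\mathcal{B}v,\dots,\mathcal{B}^{p-1}v]) = \mathrm{rank}([w,\mathcal{B}^Tw,\dots,(\mathcal{B}^{p-1})^Tw]) = p$ (this is the Kalman controllability/observability condition, equivalently minimality of the realisation $(\mathcal{B},v,w)$, equivalently no pole-zero cancellation in $g$). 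Minimality guarantees the numerator and denominator of $g$ are coprime of degrees $p-1$ and $p$, so the equation $g(\mu) = 1/\lambda$ has exactly $p$ solutions $\mu$ (counting multiplicity) for each $\lambda\neq 0$, and these $p$ roots are exactly $\sigma(\mathcal{B}-\lambda vw^T)$; conversely $\lambda$ is uniquely determined from any such $\mu$. Running this count over $\lambda\in\sigma(\mathcal{L}^+)$ accounts for all $np$ eigenvalues of $\mathcal{J}$ with the right multiplicities, giving the bijection between $\mathcal{S}_J$ and $\mathcal{S}_{\mathcal{L}^+}$.

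I would organise the write-up as: (1) invoke Lemma~\ref{lem:spec_J} to localise $\sigma(\mathcal{J})$ over the $\sigma(\mathcal{B}-\lambda\mathcal{D})$; (2) the Sherman--Morrison computation giving $\det(\mathcal{B}-\lambda vw^T - \mu\mathcal{I}) = \det(\mathcal{B}-\mu\mathcal{I})\big(1 - \lambda\, w^T(\mathcal{B}-\mu\mathcal{I})^{-1}v\big)$ for $\mu\notin\sigma(\mathcal{B})$, which simultaneously proves the eigenvalue formula and sets up the counting; (3) the minimality argument showing $g(\mu) = w^T(\mathcal{B}-\mu\mathcal{I})^{-1}v = n(\mu)/\det(\mathcal{B}-\mu\mathcal{I})$ is in lowest terms, so $1-\lambda g(\mu)$ has exactly $p$ roots; (4) conclude injectivity of $\lambda\mapsto\sigma(\mathcal{B}-\lambda\mathcal{D})$ and hence the bijection, adjoining $0$ separately. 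The main obstacle is step~(3): one has to argue carefully that the two rank conditions are exactly what forbids a common factor of $n(\mu)$ and $\det(\mathcal{B}-\mu\mathcal{I})$ — this is the classical equivalence between Kalman rank conditions and minimality of a SISO realisation, and it is what upgrades the easy ``surjective'' direction into a genuine one-to-one correspondence; the rest is routine linear algebra.
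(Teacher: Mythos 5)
The paper itself gives no proof of this proposition: it is quoted from the cited reference \cite{Mauroy2017}, so there is no internal argument to compare against. Your plan is correct and is exactly the route one would expect that reference to follow: Lemma~\ref{lem:spec_J} localises $\sigma(\mathcal{J})$ over the matrices $\mathcal{B}-\lambda vw^{T}$, the rank-one identity $\det(\mathcal{B}-\lambda vw^{T}-\mu\mathcal{I})=\det(\mathcal{B}-\mu\mathcal{I})\bigl(1-\lambda\, w^{T}(\mathcal{B}-\mu\mathcal{I})^{-1}v\bigr)$ converts membership of $\mu$ into the scalar equation $\lambda=1/w^{T}(\mathcal{B}-\mu\mathcal{I})^{-1}v$, and the two Kalman rank hypotheses give minimality of the realisation $(\mathcal{B},v,w^{T})$, hence coprimality of $w^{T}\operatorname{adj}(\mathcal{B}-\mu\mathcal{I})v$ with $\det(\mathcal{B}-\mu\mathcal{I})$, which is precisely what prevents eigenvalues of $\mathcal{B}-\lambda vw^{T}$ ($\lambda\neq 0$) from colliding with $\sigma(\mathcal{B})$ and makes the recovery of $\lambda$ from $\mu$ unambiguous; the $\lambda=0$ block contributes exactly $\sigma(\mathcal{B})$, consistent with its exclusion in the displayed formula. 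The only cosmetic point: under the paper's definition of $\mathcal{L}^{+}$ the vector $\mathbbm{1}$ lies in the \emph{right} kernel, not the left, but either way $0\in\sigma(\mathcal{L}^{+})$, which is all your argument uses.
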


The results presented here are restricted to dynamical systems over networks of the form \eqref{eq:DSN_states} which converge to a synchronized equilibrium. Further work could investigate the spectral identification problem for other dynamical systems over networks associated with different types of agents (e.g. oscillators) and coupling functions, and possibly exhibiting non-synchronized collective behaviors.


\section{PDEs on Metric Graphs}\label{sec:PDEs}

The notion of metric graph was introduced in Subsection \ref{sec:met_net}. In the current section we are going to investigate dynamic processes defined on this kind of structures.

\subsection{Hyperbolic Equation on Networks}
\label{mg_ss1}


The simplest hyperbolic equation models the advection or transport process on an interval. We consider it along the arcs of a metric graph $\mathcal{G}$. 
For simplicity, we rescale all edge lengths to 1. On an edge $e_j$, which we parameterise as $[0,1]$, we take the equation
\begin{equation}\label{eq:TE}
\frac{\partial}{\partial t}\, u_j(t,s) = c_j\cdot \frac{\partial}{\partial s}\, u_j(t,s)
,\quad t >0,\ s\in(0,1),\quad j\in J,
\end{equation}
where
the velocity coefficients are 
$c_{j}>0$ for all $j\in J$ and the flow is assumed to move in the direction opposite to the parametrisation, i.e., from endpoint 1 to endpoint 0.
Let us remark that one could also consider space- or time-variable coefficients $c_j$, as was done in \cite{EKF:22,KP:20,MS:07} or \cite{bkf-23}, respectively.
Throughout this section, denote
\[\mathcal{C}:=\diag({c_j}).\]
For the well-posedness of our problem, we should impose boundary conditions at the right endpoint of every edge, i.e., at 1. We shall assume that the weights $w_j$ on the edges of the underlying weighted combinatorial digraph $G$ (see Subsection \ref{sec:combi_net}) satisfy
\begin{equation}\label{eq:w}
0 < w_{j}\le 1 
\quad\text{and}\quad \sum_{e_j\in E^-_{v_i}} w_{j} = 1,
\end{equation}
for all $j\in J$ and $v_i\in V$. 
In particular, for the weighted outgoing degree matrix, we have $\mathcal{D}_w^-=I_n$.
The standard boundary conditions for our problem can be written as
\begin{equation}\label{eq:dis}
c_j u_{j}(t,1)= w_{j} \left[ \Phi^{+} \mathcal{C} u(t,0)\right]_i
\end{equation}
for every edge $e_j\in E^-_{v_i}$.
We assume without loss of generality that $G$ has no sinks or sources (see also \cite[Thm.~2.1]{BN:14}). Here and further on,
\[u(t,s)=\left(u_j(t,s)\right)_{j\in J}\]
denotes the vector of function values on the edges.

Note that conditions \eqref{eq:w} guarantee the conservation of mass in every vertex and together with condition \eqref{eq:dis} imply the so-called \emph{Kirchhoff's law} (“inflow = outflow”)  that can be formulated in terms of incidence matrices as
\begin{equation}\label{eq:kirch}
\Phi^{-} \mathcal{C} u(t,1) = \Phi^{+} \mathcal{C} u(t,0).
\end{equation}

The following result was proved in \cite[Prop.~2.5]{KS:05} and \cite[Prop.~18.15]{BKFR:17}.
\begin{prop}\label{prop:flow-wp}
Let $G$ be a finite connected digraph with no sinks or sources, given by the incidence matrices $\Phi^-$ and $\Phi^+$, and consider the system on the arches of the corresponding metric graph $\mathcal{G}$,
\begin{equation}\label{eq:F}
\left\{\begin{array}{rcll}
\frac{\partial}{\partial t}\, u_j(t,s) &=& c_j\cdot \frac{\partial}{\partial s}\, u_j(t,s)
,&t >0,\ s\in(0,1),\\
\phi_{ij}^{-}c_j u_{j}(t,1)&=&w_{ij}\sum_{k\in J}\phi_{ik}^{+}c_k u_{k}(t,0),& t >0,
\\
u_j(0,s)&=& f_j(s), &s\in\left[0,1\right],
\end{array}
\right.
\end{equation}
with coefficients $c_j>0$ and $w_{j}$ satisfying \eqref{eq:w}, for $j\in J$, $i\in I$. Then the problem is well-posed on the space $L^1\bigl([0,1],\CC^m\bigr).$
\end{prop}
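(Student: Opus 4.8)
The plan is to realize \eqref{eq:F} as an abstract Cauchy problem on the Banach space $X:=L^1\bigl([0,1],\mathbb{C}^m\bigr)$ and to apply the Lumer--Phillips theorem. Set $A:=\mathcal{C}\tfrac{d}{ds}$ with domain $D(A)$ consisting of those $u\in W^{1,1}\bigl([0,1],\mathbb{C}^m\bigr)$ that satisfy the vertex conditions of \eqref{eq:F}; equivalently, writing $\mathcal{C}u(1)=\mathbb{B}\,\mathcal{C}u(0)$ with $\mathbb{B}=(\mathbb{B}_{jk})_{j,k\in J}$ the nonnegative matrix having $\mathbb{B}_{jk}=w_j$ whenever $e_k$ ends and $e_j$ starts at a common vertex and $\mathbb{B}_{jk}=0$ otherwise. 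Then solving \eqref{eq:F} for $f\in D(A)$ is the same as solving $\dot u(t)=Au(t)$, $u(0)=f$; moreover $D(A)$ is dense in $X$, since all functions that are smooth and compactly supported in $(0,1)$ on each edge belong to it.

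The dissipativity of $A$ is the heart of the matter. Given $u\in D(A)$, choose in the duality map of $u$ the functional $\varphi\in X^{*}=L^{\infty}$ defined by $\varphi_j(s)=\|u\|_{1}\,u_j(s)/|u_j(s)|$ (set $\varphi_j(s)=0$ where $u_j(s)=0$). Since $\Re\bigl(u_j'(s)\,\overline{u_j(s)}/|u_j(s)|\bigr)=\tfrac{d}{ds}|u_j(s)|$ for a.e.\ $s$, integration gives
\begin{equation*}
\Re\langle Au,\varphi\rangle=\|u\|_{1}\sum_{j\in J}c_j\!\int_0^1\!\frac{d}{ds}|u_j(s)|\,ds=\|u\|_{1}\Bigl(\sum_{j\in J}c_j|u_j(1)|-\sum_{j\in J}c_j|u_j(0)|\Bigr).
\end{equation*}
It remains to verify $\sum_{j}c_j|u_j(1)|\le\sum_{j}c_j|u_j(0)|$ on $D(A)$. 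From the vertex conditions, $c_j|u_j(1)|\le w_j\sum_{k}\phi^{+}_{ik}c_k|u_k(0)|$ for the unique vertex $v_i$ with $e_j\in E^-_{v_i}$; summing over $j$, swapping the order of summation, and using first $\sum_{e_j\in E^-_{v_i}}w_j=1$ from \eqref{eq:w} and then $\sum_{i\in I}\phi^{+}_{ik}=1$ (each edge has exactly one head), the estimate collapses to $\sum_{k}c_k|u_k(0)|$. Hence $\Re\langle Au,\varphi\rangle\le0$, so $A$ is dissipative.

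For the range condition, let $\lambda\in\mathbb{C}$ with $\Re\lambda$ large. Solving $(\lambda-A)u=g$ edgewise, the scalar ODE $c_ju_j'=\lambda u_j-g_j$ yields $u_j$ explicitly in terms of $c_ju_j(0)$ and $g_j$; substituting into $\mathcal{C}u(1)=\mathbb{B}\mathcal{C}u(0)$ reduces the problem to the linear system $(\mathrm{E}_\lambda-\mathbb{B})\,y=\hat g$, where $y=\mathcal{C}u(0)$, $\mathrm{E}_\lambda=\diag\bigl(e^{\lambda/c_j}\bigr)$, and $\hat g$ depends continuously on $g$. Because $\|\mathrm{E}_\lambda^{-1}\mathbb{B}\|\le e^{-\Re\lambda/\max_j c_j}\|\mathbb{B}\|\to0$ as $\Re\lambda\to+\infty$, the matrix $\mathrm{E}_\lambda-\mathbb{B}$ is invertible, so $y$, and hence $u\in D(A)$ with $(\lambda-A)u=g$, exists (and is unique). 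Therefore $\lambda-A$ is surjective for some $\lambda>0$, and by the Lumer--Phillips theorem $A$ generates a contraction $C_0$-semigroup on $X=L^1\bigl([0,1],\mathbb{C}^m\bigr)$; well-posedness of \eqref{eq:F} in the usual sense (classical solutions for $f\in D(A)$, mild solutions for general $f\in X$) follows.

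I expect the dissipativity estimate $\sum_{j}c_j|u_j(1)|\le\sum_{j}c_j|u_j(0)|$ to be the one genuinely delicate point, not because it is long but because it is precisely where the stochasticity of the vertex weights \eqref{eq:w} — i.e.\ conservation of mass at each vertex — and the absence of sinks and sources are used; without them one would need a weighted norm or would lose contractivity of the semigroup. A minor technicality is that $L^1$ is not reflexive, so one must argue with the multivalued duality map, but this is routine. As an alternative route one could avoid Lumer--Phillips altogether and build the semigroup explicitly as a flow semigroup, transporting data along the characteristics $s\mapsto s+c_jt$ and redistributing it through $\mathbb{B}$ at the vertices; the difficulty then migrates to the combinatorics of tracing characteristics through several vertices and to checking strong continuity directly.
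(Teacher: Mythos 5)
Your argument is correct, and it is worth noting that the paper itself does not prove Proposition \ref{prop:flow-wp} at all: it delegates the proof to \cite[Prop.~2.5]{KS:05} and \cite[Prop.~18.15]{BKFR:17}, where generation is obtained through the explicit resolvent of $(A_F,D(A_F))$ (the formula later invoked in Theorem \ref{thmEszterMarjeta} via \cite[Cor.~18.13]{BKFR:17}) within the boundary-perturbation framework for flows in networks. Your route is a self-contained Lumer--Phillips argument: density, $L^1$-dissipativity with the sign functional, and the range condition reduced to invertibility of $\mathrm{E}_\lambda-\mathbb{B}$ for $\Re\lambda$ large. Both proofs ultimately rest on the same two structural facts — column (sub)stochasticity of $\mathbb{B}=\mathcal{B}^+_w$ coming from \eqref{eq:w}, and the Neumann-series invertibility of the very matrix pencil $\mathrm{E}_\lambda-\mathbb{B}$ (equivalently $I-\mathcal{B}^+_{C,\lambda}$) that the paper reuses for its spectral characterization — so your range computation is essentially the resolvent computation of the cited proof, while your dissipativity step replaces their resolvent norm estimates and additionally yields that the semigroup is contractive (and, with a positivity argument, positive), which is slightly more than the bare well-posedness claimed.

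Two small corrections of emphasis. First, your closing remark that the dissipativity estimate uses ``the absence of sinks and sources'' is not quite right: what the estimate uses is that every edge has exactly one head ($\sum_{i\in I}\phi^+_{ik}=1$) and that the outgoing weights at each tail vertex sum to $1$; the no-sinks/no-sources hypothesis is the normalization the paper assumes without loss of generality (and indeed \eqref{eq:w} already forces every vertex to have outgoing edges). Dropping mass at a vertex would still give dissipativity, only not conservation. Second, trivially, the duality functional should be $\varphi_j=\|u\|_1\,\overline{u_j}/|u_j|$ (or the conjugate pairing), and your matrix $\mathbb{B}$, with the weight $w_j$ attached to the \emph{outgoing} edge $e_j$, is the reading consistent with the boundary condition in \eqref{eq:F} and with the paper's claim that $\mathcal{B}^+_w$ is column stochastic, even though the displayed definition of $\mathcal{B}^+_w$ in Section \ref{sec:combi_net} indexes the weight by the incoming edge; it is good that you followed the boundary condition rather than that display.
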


\begin{rem}
The same result was in \cite[Prop.~2.3]{MS:07} and \cite[Cor.~2.19]{EKF:22} obtained also for non-constant coefficients $c_j$.
Moreover, problem \eqref{eq:F} is well-posed also on $L^p\bigl([0,1],\CC^m\bigr)$ for $1\leq p<+\infty$, see \cite[Cor.~2.19]{EKF:22} and \cite[Prop.~3.1]{KP:20}.
\end{rem}

Recall that an abstract Cauchy problem on a Banach space $X$ associated with a closed operator $A\colon D(A)\subset X\to X$,
\begin{equation*}
\text{(ACP)} \begin{cases}
\frac{d}{dt}{x}(t)=Ax(t),& t>0,\\ 
x(0)=f\in X
\end{cases} \end{equation*}
is \emph{well-posed} if and only if $(A, D(A))$ is the (infinitesimal) \emph{generator} of a \emph{strongly continuous semigroup} $\Tt$ on $X$, cf.~\cite[Cor.~II.6.9]{EN:06}. In this case, the mild (or, in the case $f\in D(A)$, classical) solution to (ACP) is given by $x(t)=T(t)f$, $t\geq 0$, and many properties of the solution can be inferred from the respective characteristics of the semigroup, which are frequently derived from the spectral properties of the generator.
For further information we refer to \cite{EN:06}.

Problem \eqref{eq:F} can be rewritten in the form of an abstract Cauchy problem on $L^1\bigl([0,1],\CC^m\bigr)$ by defining the 
 operator
\begin{equation}\label{eq:flow-op}
A_F := \mathcal{C}\cdot\frac{d}{ds},\quad
D\left(A_F\right) :=\left\{f\in\mathrm{W}^{1,1}\bigl([0,1],\CC^m\bigr) \colon f(1) = \mathcal{B}^+_C f(0) \right\}
\end{equation}
(for the condition of the domain see \cite[Prop.~18.2]{BKFR:17}), where $\mathcal{B}^+_C$ is the weighted adjacency matrix of the line graph obtained as 
\begin{equation*}
\mathcal{B}_C^+:= \mathcal{C}^{-1} \mathcal{B}_w^+ \mathcal{C}.
\end{equation*}
Proposition \ref{prop:flow-wp} tells us that $(A_F, D(A_F))$ is the generator of a strongly continuous semigroup $\Tt$ on $L^1\bigl([0,1],\CC^m\bigr)$ and the solution to \eqref{eq:F} is given by
\[u_{f}(t)= T(t)f,\quad t\geq 0,\] where $u_f(t)=u_f(t,\cdot)$ denotes the vector of solution functions on the edges with an initial value $f$.

Under the assumption on the weights \eqref{eq:w}, the matrix $\mathcal{B}_w^+$ is column stochastic. This turns out to be important when studying further qualitative properties of the solutions. 
Many properties of the semigroup - and hence of the solutions - are given by the structure of the graph. For example, the semigroup $\Tt$ is irreducible if and only if the oriented graph $G$ is strongly connected (cf.~\cite[Prop.~18.16]{BKFR:17} and \cite[Lem.~4.5]{MS:07}).

The spectrum of the operator $(A_F, D(A_F))$ can be characterized using the weighted adjacency matrix $\mathcal{A}_w^{+}$, the weighted adjacency matrix of the line graph $\mathcal{B}_C^+$ or, equivalently, the advection matrix $\mathcal{N}^{+}_w$ as follows. We start by adjusting the nonzero coefficients of these matrices. Define 
\begin{equation}
E_{\lambda}(s):=\diag\left(e^{\frac{\lambda}{c_{k}}s } \right),\quad s\in \mathbb{R},\label{eq:te_epl}
\end{equation}
and let
\[
\mathcal{A_{\lambda}^+} := \Phi ^{+} E_{\lambda}(-1)\left( \Phi_w^{-}\right) ^{\top}, \quad \mathcal{B}_{C,\lambda}^+:= E_{\lambda}(-1)\mathcal{B}_{C}^+,\quad\text{and}\quad \mathcal{N_{\lambda}^+} :=I-\mathcal{A_{\lambda}^+}.
\]
Notice that $\mathcal{A}_0^+ = \mathcal{A}_w^+$, $\mathcal{B}_{C,0}^+ = \mathcal{B}_{C}^+$, and $\mathcal{N}_0^+ = \mathcal{N}^+_w$ (recall that $\mathcal{D}_w^-=I_n$ by \eqref{eq:w}).

\begin{thm}\label{thmEszterMarjeta}
Let $(A_F, D(A_F))$ be the operator given in \eqref{eq:flow-op}. Then 
\[\lambda\in\sigma(A_F)=\sigma_p(A_F)\Longleftrightarrow 1\in\sigma(\mathcal{A}_{\lambda}^+) \Longleftrightarrow 1\in\sigma(\mathcal{B}_{C,\lambda}^+)\Longleftrightarrow 0\in\sigma(\mathcal{N}_{\lambda}^+). \] 
Moreover, $g$ is an eigenfunction of $A_F$ corresponding to the eigenvalue $\lambda$ if and only if $g(s)=E_{\lambda}(s)d$ where $d$ is an eigenvector of $\mathcal{B}_{C,\lambda}$ corresponding to the eigenvalue 1.
\end{thm}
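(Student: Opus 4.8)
The plan is to solve the eigenvalue equation $A_F g=\lambda g$ by integrating the ODE along each edge, to rephrase the boundary condition of $D(A_F)$ as a finite-dimensional spectral condition, and then to identify the three matrices $\mathcal{A}_\lambda^+$, $\mathcal{B}_{C,\lambda}^+$, $\mathcal{N}_\lambda^+$ by elementary linear algebra; the description of the eigenfunctions will come out of the first step for free. So, first I would fix $\lambda\in\CC$ and look for $g\in D(A_F)$ with $\mathcal{C}g'=\lambda g$ on $(0,1)$. Since $\mathcal{C}=\diag(c_j)$ with $c_j>0$, this decouples into $c_j g_j'=\lambda g_j$, hence $g_j(s)=e^{(\lambda/c_j)s}g_j(0)$, i.e.\ $g(s)=E_\lambda(s)g(0)$ with $E_\lambda$ as in \eqref{eq:te_epl}. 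Membership in $D(A_F)$ then reduces to $g(1)=\mathcal{B}_C^+ g(0)$, that is $E_\lambda(1)g(0)=\mathcal{B}_C^+ g(0)$; multiplying by $E_\lambda(1)^{-1}=E_\lambda(-1)$ and recalling $\mathcal{B}_{C,\lambda}^+=E_\lambda(-1)\mathcal{B}_C^+$ turns this into $\mathcal{B}_{C,\lambda}^+ g(0)=g(0)$. Conversely, for any $d\in\ker(I-\mathcal{B}_{C,\lambda}^+)$ the function $g(s):=E_\lambda(s)d$ lies in $D(A_F)$ and satisfies $A_F g=\lambda g$. Thus $\lambda\in\sigma_p(A_F)$ iff $1\in\sigma(\mathcal{B}_{C,\lambda}^+)$, and the eigenfunctions for $\lambda$ are exactly the $g(s)=E_\lambda(s)d$ with $0\neq d\in\ker(I-\mathcal{B}_{C,\lambda}^+)$ — the last assertion of the theorem.

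Next I would show $\sigma(A_F)=\sigma_p(A_F)$. For $\lambda$ with $1\notin\sigma(\mathcal{B}_{C,\lambda}^+)$ and arbitrary $h\in L^1([0,1],\CC^m)$ I would solve $(\lambda-A_F)g=h$, i.e.\ $g'=\lambda\mathcal{C}^{-1}g-\mathcal{C}^{-1}h$, by variation of parameters, $g(s)=E_\lambda(s)d-\int_0^s E_\lambda(s-\tau)\mathcal{C}^{-1}h(\tau)\,d\tau$, and fix $d$ from the boundary condition, which (again after multiplying by $E_\lambda(-1)$) becomes the linear system $(I-\mathcal{B}_{C,\lambda}^+)d=\int_0^1 E_\lambda(-\tau)\mathcal{C}^{-1}h(\tau)\,d\tau$. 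Invertibility of $I-\mathcal{B}_{C,\lambda}^+$ yields a unique such $g\in W^{1,1}$, depending boundedly on $h$ in $L^1$ (all matrices are fixed and $E_\lambda$ is bounded on $[-1,1]$); with the injectivity observed above this gives $\lambda\in\rho(A_F)$. Hence $\CC$ splits as the disjoint union of $\rho(A_F)$ and $\{\lambda:1\in\sigma(\mathcal{B}_{C,\lambda}^+)\}\subseteq\sigma_p(A_F)$, so $\sigma(A_F)=\sigma_p(A_F)$. (One may instead simply invoke \Cref{prop:flow-wp}, which ensures $A_F$ is closed with $\rho(A_F)\neq\emptyset$.)

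Finally I would match the three matrices. The equivalence $1\in\sigma(\mathcal{A}_\lambda^+)\Leftrightarrow 0\in\sigma(\mathcal{N}_\lambda^+)$ is immediate from $\mathcal{N}_\lambda^+=I-\mathcal{A}_\lambda^+$. For $1\in\sigma(\mathcal{B}_{C,\lambda}^+)\Leftrightarrow 1\in\sigma(\mathcal{A}_\lambda^+)$, I would write $\mathcal{A}_\lambda^+=\Phi^+E_\lambda(-1)(\Phi_w^-)^\top=XY$ with $X=\Phi^+E_\lambda(-1)W$ and $Y=(\Phi^-)^\top$; using the identity $\mathcal{B}_w^+=(\Phi^-)^\top\Phi^+W$, read off the definition of the line-graph adjacency, together with the pairwise commutativity of the diagonal matrices $E_\lambda(-1)$, $W$, $\mathcal{C}$, one finds $YX=\mathcal{B}_w^+ E_\lambda(-1)$, while $\mathcal{B}_{C,\lambda}^+=E_\lambda(-1)\mathcal{C}^{-1}\mathcal{B}_w^+\mathcal{C}$ is conjugate (via $\mathcal{C}$) to $E_\lambda(-1)\mathcal{B}_w^+$, which shares its characteristic polynomial with $\mathcal{B}_w^+ E_\lambda(-1)=YX$. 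Since $XY$ and $YX$ have the same nonzero eigenvalues and $1\neq 0$, the equivalence follows, and the whole chain in the theorem is established.

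The two solvability computations are routine once the reduction to $I-\mathcal{B}_{C,\lambda}^+$ has been spotted; the step I expect to be the main obstacle is the last one — expressing $\mathcal{B}_w^+$, and hence $\mathcal{B}_{C,\lambda}^+$, through the incidence matrices and carefully tracking the several diagonal conjugations by $E_\lambda$, $W$ and $\mathcal{C}$ so that the $XY$-versus-$YX$ identification is valid, in particular pinning down the convention for $\Phi_w^-$ that makes $\mathcal{A}_0^+=\mathcal{A}_w^+$.
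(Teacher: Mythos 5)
Your proposal is correct, and it follows the same underlying mechanism as the paper, but it is genuinely more self-contained where the paper argues by citation. The paper's proof consists of two moves: it invokes the explicit formula for the resolvent $R(\lambda,A_F)$ from \cite[Cor.~18.13]{BKFR:17} (whose positivity and compactness yield $\sigma(A_F)=\sigma_p(A_F)$ and the spectral equivalences), and it records that $\ker(\lambda-A_F)$ is spanned by functions $E_\lambda(\cdot)d$ with $g(1)=\mathcal{B}_C^+g(0)$, leaving the eigenvector correspondence as an easy calculation. You instead reconstruct all of this: the eigenfunction ansatz $g(s)=E_\lambda(s)g(0)$ and the reduction of the boundary condition to $\mathcal{B}_{C,\lambda}^+g(0)=g(0)$ (identical to the paper's observation, and it delivers the ``moreover'' part exactly); the variation-of-parameters formula $g(s)=E_\lambda(s)d-\int_0^s E_\lambda(s-\tau)\mathcal{C}^{-1}h(\tau)\,d\tau$ with $(I-\mathcal{B}_{C,\lambda}^+)d=\int_0^1 E_\lambda(-\tau)\mathcal{C}^{-1}h(\tau)\,d\tau$, which is precisely the explicit resolvent the paper cites, and which lets you get $\sigma(A_F)=\sigma_p(A_F)$ from the dichotomy $1\notin\sigma(\mathcal{B}_{C,\lambda}^+)\Rightarrow\lambda\in\rho(A_F)$ rather than from compactness (the fallback to closedness of $A_F$ via \Cref{prop:flow-wp} is a legitimate shortcut for boundedness of the inverse); and the link between $\mathcal{B}_{C,\lambda}^+$, $\mathcal{A}_\lambda^+$ and $\mathcal{N}_\lambda^+$ via the factorisation $\mathcal{B}_w^+=(\Phi^-)^\top\Phi^+W$, commutation of the diagonal factors $E_\lambda(-1)$, $W$, $\mathcal{C}$, and the fact that $XY$ and $YX$ share nonzero spectrum — a standard graph/line-graph argument that the paper subsumes in the citation. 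Your bookkeeping is consistent: with $\Phi_w^-=\Phi^-W$ one indeed has $\mathcal{A}_0^+=\mathcal{A}_w^+$, $XY=\mathcal{A}_\lambda^+$ and $YX=\mathcal{B}_w^+E_\lambda(-1)\sim\mathcal{B}_{C,\lambda}^+$. The trade-off: your route is elementary and fully verifiable within the statement's data, while the paper's route is shorter and, via compactness and positivity of the resolvent, yields extra structural information used elsewhere (e.g.\ for the asymptotics in Theorem \ref{thm:flow-asy}) that your argument does not reproduce.
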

\begin{proof}
The equalities about the spectra follow from the explicit formula of the resolvent $R(\lambda,A_F)$ which is positive and compact, see \cite[Cor.~18.13]{BKFR:17}. Observe that the eigenspace $\ker (\lambda-A_F)$ is spanned by the functions of the form $g(\cdot) = E_{\lambda}(\cdot) d$ for some $d\in \mathbb{C}^{J}$ such that $g(1) = \mathcal{B}^+_C g(0)$. The given correspondence between eigenvectors can be derived by an easy calculation. 
\end{proof}
\medskip

The following condition plays a crucial role in the long-term behaviour of the solutions.
\begin{equation}\label{ldq}
\begin{aligned}
\text{There exists } &0<d\in\RR \text{ such that }d\cdot \left(\frac{1}{c_{j_1}} + \cdots + \frac{1}{c_{j_k}}\right)\in\NN \\ 
&\text{for all directed cycles }e_{j_1},\dots,e_{j_k} \text{ in }G.
\end{aligned}
\end{equation}
If \eqref{ldq} is satisfied, 
the boundary spectrum of $A_F$ is cyclic (see \cite[Prop.~18.1b]{BKFR:17}). Moreover, in this case also
a so-called \emph{circular spectral mapping theorem} holds for the spectrum of the generator $(A_F, D(A_F))$ and the semigroup $\Tt$, see \cite[Prop.~3.8]{KS:05} and \cite[Prop.~4.10]{MS:07}. This states the following:
\[\Gamma\cdot e^{t\sigma(A_F)}=\Gamma\cdot\sigma (T(t))\setminus\{0\}\text{ for each }t\geq 0,\]
where $\Gamma$ denotes the unit circle.

From the spectral properties of the generator we can obtain the asymptotic behaviour of the solutions of problem \eqref{eq:F}, see \cite[Thm.~4.10]{KS:05} and \cite[Thm.~18.19]{BKFR:17}. 
We call a subgraph $G_r$ of $G$ a \emph{terminal strong component} if it is strongly connected and there are no outgoing edges of $G_r$, see \cite[page 17]{BaJeGu:01}.

\begin{thm}
\label{thm:flow-asy} 
Let $G$ be a connected graph with terminal strong components $G_{1},$ $\dots,$ $G_{{\ell}}$ and $\Tt$ the semigroup associated with the transport problem \eqref{eq:F}. Then for any initial value $f\in L^1\bigl([0,1],\CC^m\bigr)$ the (mild) solution of system \eqref{eq:F} can be written uniquely in the form of a sum
\begin{align*}
u_f(t)=T(t)f&=T_{nil}(t)f + T_{stab}(t)f + T_{r_1}(t)f + \cdots + T_{r_{\ell}}(t)f,\\
&\coloneqq u_{nil,f}(t)+u_{stab,f}(t)+u_{r_1,f}(t) + \cdots + u_{r_{\ell},f}(t),\quad t\geq 0,
\end{align*}
where $T_{nil}(\p)$, $ T_{stab}(\p)$, $ T_{r_1}(\p)$, $ \dots,$ $T_{r_{\ell}}(\p)$ are strongly continuous semigroups on the $T(t)$-invariant subspaces $X_{nil}$, $X_{stab}$, $X_{r_1}$, $\cdots$, $X_{r_{\ell}}$, respectively, with
\[L^1\bigl([0,1],\CC^m\bigr)=X_{nil} \oplus X_{stab} \oplus X_{r_1} \oplus \cdots \oplus X_{r_{\ell}},\]
and the following holds:
\begin{enumerate}
\item There exists $t_0>0$ such that $u_{nil,f}(t)=0$ for all $t\geq t_0$.
\item The solution on $X_{stab}$ is stable: $\lim\limits_{t\to+\infty}u_{stab,f}(t)=0$.
\item If for some $1\le i\le\ell$ the graph $G_{i}$ satisfies Condition \eqref{ldq} then for the period
\[ \tau_i = \frac{1}{d}\gcd\left\{ d\cdot\left( \frac{1}{c_{j_1}} + \cdots + \frac{1}{c_{j_k}}\right) \colon e_{j_1},\dots,e_{j_k} \text{ is a directed cycle in }G_i\right\},\]
the solution on $X_{r_i}$ behaves periodically: 
\[u_{r_i,f}(t+\tau_i)=u_{r_i,f}(t), \quad t\in\RR.\]
\item If for some $1\le i\le\ell$ graph $G_{i}$ does not satisfy Condition \eqref{ldq}, then for the solution on $X_{r_i}$ it holds
\[\lim_{t\to+\infty}u_{r_i,f}(t)=\mathcal{P}_{X_{r_i}}f,\] where $\mathcal{P}_{X_{r_i}}$ denotes the projection onto the one-dimensional subspace $X_{r_i}$.
\end{enumerate}
\end{thm}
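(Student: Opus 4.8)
The plan is to follow the semigroup-theoretic route of \cite[Thm.~4.10]{KS:05} and \cite[Thm.~18.19]{BKFR:17}: combine the spectral description of Theorem~\ref{thmEszterMarjeta}, the circular spectral mapping theorem, and Perron--Frobenius theory for positive $C_0$-semigroups, after splitting the state space along the strong components of $G$. First I would record the structure. By Proposition~\ref{prop:flow-wp}, $(A_F,D(A_F))$ generates a positive $C_0$-semigroup $\Tt$ on $L^1\bigl([0,1],\CC^m\bigr)$ whose resolvent is positive and compact (\cite[Cor.~18.13]{BKFR:17}), so $\sigma(A_F)=\sigma_p(A_F)$ is a discrete set of eigenvalues of finite algebraic multiplicity lying in $\{\Re\lambda\le0\}$. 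Moreover \eqref{eq:w} yields Kirchhoff's law \eqref{eq:kirch}, hence $t\mapsto\sum_{j\in J}\int_0^1u_j(t,s)\,ds$ is conserved along non-negative solutions and $\|T(t)\|=1$; since $\mathcal{B}_w^+$ is column stochastic, Theorem~\ref{thmEszterMarjeta} gives $0\in\sigma_p(A_F)$, so $s(A_F)=0$. I would then pass to the condensation of $G$, single out its sinks --- the terminal strong components $G_1,\dots,G_\ell$ into which the transport drains --- and use positivity to build a $\Tt$-invariant direct sum $L^1\bigl([0,1],\CC^m\bigr)=X_{nil}\oplus X_{stab}\oplus X_{r_1}\oplus\cdots\oplus X_{r_\ell}$: here $X_{r_i}$ is the invariant subspace associated with $G_i$ (on which $T_{r_i}(\cdot)$ acts as the transport semigroup of $G_i$), while the transient edges carry $X_{nil}\oplus X_{stab}$; splitting the transient subsystem further by its own strong-component structure separates the purely nilpotent shift dynamics on $X_{nil}$, giving statement (1) (finite-time extinction), from the remainder $X_{stab}$, where every strong component can reach some $G_i$, so mass leaks out, the generator has negative spectral bound, and a relatively-compact-orbit argument (compact resolvent plus boundedness, cf.~\cite{EN:06}) yields statement (2).

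The heart of the proof is the analysis of one terminal component. Fix $i$ and set $T_i(\cdot)=\Tt|_{X_{r_i}}$; since $G_i$ is strongly connected, $T_i(\cdot)$ is irreducible \cite[Prop.~18.16]{BKFR:17}, positive and conservative, so by the Perron--Frobenius theory of positive irreducible semigroups $0$ is a first-order pole of the resolvent with one-dimensional spectral projection $\mathcal{P}_{X_{r_i}}$ of strictly positive range. To locate the imaginary spectrum I would impose $1\in\sigma(\mathcal{B}_{C,\lambda}^+)$ from Theorem~\ref{thmEszterMarjeta} and trace the factors $e^{\lambda/c_k}$ of $E_\lambda(-1)$ around the directed cycles of $G_i$: this reduces the condition $\lambda\in\sigma(A_F|_{X_{r_i}})\cap i\RR$ to requiring $e^{\lambda(1/c_{j_1}+\cdots+1/c_{j_k})}=1$ for every directed cycle $e_{j_1},\dots,e_{j_k}$ of $G_i$. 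If $G_i$ satisfies \eqref{ldq}, these conditions are simultaneously solvable, the boundary spectrum is cyclic and equals $\tfrac{2\pi i}{\tau_i}\mathbb{Z}$ with $\tau_i$ the stated gcd, the peripheral eigenvalues are semisimple, and --- since $T_i(\cdot)$ is bounded, positive and conservative with compact resolvent --- the circular spectral mapping theorem \cite[Prop.~3.8]{KS:05}, \cite[Prop.~4.10]{MS:07} upgrades this to $T_i(\cdot)$ being a periodic group of period $\tau_i$, i.e.\ statement (3). If \eqref{ldq} fails, the same cycle analysis forces $\sigma(A_F|_{X_{r_i}})\cap i\RR=\{0\}$ with $0$ semisimple; boundedness of $T_i(\cdot)$ and relative compactness of its orbits then give $T_i(t)f\to\mathcal{P}_{X_{r_i}}f$ (a Tauberian/Arendt--Batty-type conclusion, cf.~\cite{EN:06}), which is statement (4).

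Putting the summands together gives $u_f(t)=T(t)f=T_{nil}(t)f+T_{stab}(t)f+\sum_{i=1}^{\ell}T_{r_i}(t)f$, and (1)--(4) follow directly. The positivity and compactness bookkeeping of the first step and the convergence theorems of the second are routine. The genuine obstacle is the combinatorial spectral analysis on the terminal components: showing that under \eqref{ldq} the whole spectrum there collapses onto the cyclic group $\tfrac{2\pi i}{\tau_i}\mathbb{Z}$ with the stated gcd and carries no Jordan blocks --- i.e.\ understanding exactly when $1$ is an eigenvalue of the column-stochastic matrix $\mathcal{B}_{C,\lambda}^+$ and how its eigenvectors are distributed over the cycle structure of $G_i$ --- and, dually, verifying that the transient subsystem has strictly negative spectral bound so that no peripheral spectrum is hidden there. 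This is precisely where one must invoke the detailed estimates of \cite{KS:05} and \cite{MS:07}.
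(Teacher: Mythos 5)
You should first be aware that the paper does not prove Theorem~\ref{thm:flow-asy} at all: it imports it from \cite[Thm.~4.10]{KS:05} and \cite[Thm.~18.19]{BKFR:17}, and your overall route (spectral characterisation via $\mathcal{B}_{C,\lambda}^+$ as in Theorem~\ref{thmEszterMarjeta}, cyclicity of the boundary spectrum under \eqref{ldq}, circular spectral mapping, positivity/Perron--Frobenius and a Jacobs--de Leeuw--Glicksberg-type splitting) is exactly the route of those sources, so in spirit your sketch is aligned with the proof the paper relies on. Two of your bookkeeping claims are in fact sound: $\|T(t)\|=1$ follows from positivity together with conservation of $\sum_{j}\int_0^1 u_j\,ds$ (sum Kirchhoff's law \eqref{eq:kirch} over the vertices), and relative norm-compactness of orbits does follow from boundedness plus compactness of the resolvent, since for $f\in D(A_F)$ the orbit lies in $R(\lambda,A_F)\{T(t)(\lambda-A_F)f\}$, the image of a bounded set under a compact operator, and density of $D(A_F)$ transfers total boundedness to all orbits.

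The genuine gap is your construction of the invariant decomposition, which is the actual content of the theorem. The subspace of functions supported on the transient edges is \emph{not} $T(t)$-invariant (mass drains into the terminal components), so ``the transient edges carry $X_{nil}\oplus X_{stab}$'' cannot be implemented literally; and $X_{r_i}$ cannot be ``the invariant subspace associated with $G_i$ on which $T_{r_i}(\cdot)$ acts as the transport semigroup of $G_i$''. The transport semigroup of a terminal strong component is in general only \emph{asymptotically} periodic, not a periodic group: take two loops of unit travel time attached to one vertex with the outflow split by weights $w_1,w_2$; generic data are mixed by the column-stochastic matrix and merely converge to a periodic orbit, so statement (3) fails on that whole subspace, and your own treatment of case (4), where $X_{r_i}$ is one-dimensional, contradicts the identification you made earlier. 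The correct $X_{r_i}$ is the closed span of eigenfunctions of $A_F$ with purely imaginary eigenvalues produced by $G_i$ (the range of the peripheral spectral projection), $X_{stab}$ must also absorb the stable directions supported \emph{inside} the terminal components, and $X_{nil}$ comes from the nilpotent blocks of the Frobenius normal form of $\mathcal{B}_C^+$ (edges lying on no cycle), not from ``the transient subsystem'' as a set of edges. In \cite{KS:05,MS:07,BKFR:17} these subspaces are produced from the block decomposition of the column-stochastic matrix $\mathcal{B}_C^+$ lifted to $L^1\bigl([0,1],\CC^m\bigr)$, and verifying (1)--(4) on them is precisely the work your proposal defers back to the cited papers; as written, the sketch would not assemble into a proof without repairing this step.
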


Therefore, the structure of the discrete graph strongly influences the long-term behaviour of the solutions to the given problem. 

Let us also note that one can generalize the problem and consider a system of hyperbolic equations on each graph edge. Such problems were studied in \cite{KMN:21} in the $L^2$-setting where the existence of the solutions was obtained under appropriate assumptions for the (not necessarily hermitian) coefficient matrices and admissible boundary conditions.

We further mention that stabilization results of some hyperbolic systems on graphs are available in \cite{nic:hyperbolic,Hayeketall-20}, where exponential or polynomial energy decay is obtained.

Another class of problem \eqref{eq:F} generalizations is transport on time-varying metric graphs. Depending on the types of graph modifications, one can mention \cite{Bayazit_2013} where time-dependent weights are considered, \cite{bkf-23} with both weights and velocities of flow differing in time, and finally \cite{LNP_2024} where not only the coefficients but also the structure itself can be modified.

\subsection{Virus Variant Modelling in Fragmented Environment}\label{subsec:virus2}

Let us now return to the motivating example in Subsection \ref{subsec:virus}. We build the mathematical model based on network transport equation that describes the evolution of variants of the virus. The aim of this example is answering the following research problem: What type of vaccine should be widely available in a chosen area, to optimise the effectiveness of vaccination policy?

We consider $P$ patches that can be interpreted either as cities or larger areas divided by natural barriers such as rivers, mountain ranges, etc. 
In each patch $p$, $p=1,\ldots, P$, there are people colonised by exactly one variant $q$ of the virus, $q=1,\ldots,Q-1$, as well as uncolonised persons. For the sake of simplicity, we denote the uncolonised persons by colonised by virus variant $q=Q$. Additionally, we denote by $t_{q}$, $q=1,\ldots,Q-1$, an average duration of infection caused by the $q$-th virus variant. Similarly, we choose $t_{Q}$ as the time during which uncolonised persons remain uncolonised.

We start the description by defining a combinatorial graph $G$. Let us associate vertex $v_{(q-1)P+p}\in V=\{v_i:\,i=1,\ldots,PQ\}$ with a group of people in the patch $p=1,\ldots,P$, having the virus variant $q=1,\ldots,Q$. Edges indicate the intensities of both virus transfers between patches and the emergence of mutations. The edge connecting $v_i$ and $v_j$, $i,j=1,\ldots, PQ$, has a weight denoted by $w_{ji}$, which can be interpreted in the following way:

\begin{itemize}
\item fraction of mutations from the $q_2$-th variant to the $q_1$-th variant in patch $p$
$$w_{(q_1-1)P+p,(q_2-1)P+p};\qquad p=1,\ldots,P,\,\, q_1,q_2=1,\ldots, Q;$$

\item fraction of patients colonised by the $q$-th variant traveling from patch $p_2$ to $p_1$, given by:
$$w_{(q-1)P+p_1,(q-1)P+p_2},\qquad p_1,p_2=1,\ldots,P,\,\, q=1,\ldots,Q;$$ 

\item fraction of patients colonised by the $q$-th variant suffering longer than the average time $t_{q}$ in patch $p$, given by:
$$w_{(q-1)P+p,(q-1)P+p};\qquad p=1,\ldots,P,\,\, q=1,\ldots, Q.$$ 
\end{itemize}
Assuming that mutations appear only in patches, for $p_1\neq p_2$, $q_1\neq q_2$, $p_1,p_2=1,\ldots, P$, $q_1,q_2=1,\ldots,Q$ we have 
$$w_{(q_1-1)P+p_1,(q_2-1)Q+p_2}=0.$$

We divide the dynamic process describing the spread of infection in society into two stages. The first is passing through the life-cycle of infection while the second is an instantaneous transfer of patients between patches, an instantaneous mutation of the virus, or an instantaneous prolongation of colonisation. 

The second dynamical phenomenon can be associated with edges' weights on the combinatorial graph $G$. In order to introduce the first dynamical process we extend each vertex into interval $[0,1]$, hence locally one-dimensional metric space, and define a transport equation on a metric graph $\mathcal{G}$.  Assuming that $[0,1]$ is parametrised with $s$, then position $s\in [0,1]$ in vertex $v_i$ determines the time $st_i$ that has elapsed since the colonisation. Consequently, $s$ can be regarded as a local time parameter while $t$ as the global one. 

In this way, one can identify the flow along the interval $[0,1]$ with an evolution of population distribution assigned to this particular vertex. Namely, we define a function $u(t,s)=(u_i(t,s))_{i=1,\ldots,QP}$ such that $u_i(t,s)$ describes the distribution of persons at time $t$ that were infected at $t-1+s$. We obtain a system \eqref{eq:F} with $\Phi^{\pm}$ being incidence matrices of graph $G$ and 
\begin{equation*}
c_{(q-1)P+p}=t_q^{-1},\textrm{for } p=1,\ldots, P; \,\, q=1,\ldots, Q,
\end{equation*}
the velocities of flow. Due to the inaccuracy of parameter estimation, we assume without loss of generality that $t_{q}\in \mathbb{Q}_+$, $q=1,\ldots,Q$, and consequently, \eqref{ldq} always holds.

Let us refer first to the network structure of this model. On the one hand we interpret the problem as dynamics in (one-dimensional) vertices defined by a system of PDEs. On the other, one can build a metric graph $\mathcal{G}$ such that $L(\mathcal{G})=G$. If it is possible, the problem can be associated with transport on the metric graph $\mathcal{G}$, and we say that the problem is graph-realisable, \cite{BF:15}. The graphical representation of all three objects, namely combinatorial graph $G$, metric graph's counterpart $\mathcal{G}$ and combinatorial graph with locally one-dimensional vertices is presented in Figure \ref{fig:3networks}, in the case $P=1$, $Q=2$. In the following considerations, we apply metric graph theory hence we use the nomenclature related to metric graphs.

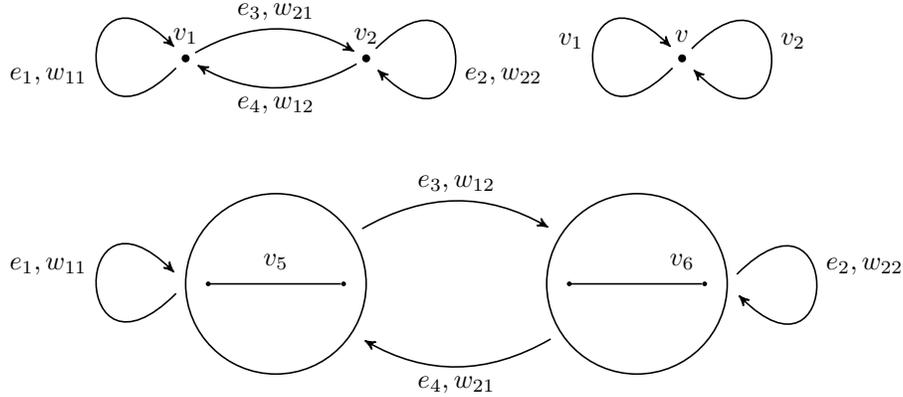
\begin{figure}
\centering
\begin{tikzpicture}[scale=0.6, >=stealth',shorten >=1pt,auto,node distance=1.5cm,  semithick]
\tikzstyle{every state}=[fill=none,draw=none,text=black]

\draw (3,-4) circle (2cm);
\draw (11,-4) circle (2cm);
\filldraw (1,1) circle (2pt);
\filldraw (5,1) circle (2pt);
\filldraw (12,1) circle (2pt);

   \node (p1) at (1, 1) {};
   \node (A) at (1,1.5) {$v_1$};
   \node (p2) at ( 5, 1) {};
   \node (B) at (5,1.5) {$v_2$};
   \node (p3) at ( 12, 1) {};
   \node (C) at (12,1.5) {$v$};
   \node (p4) at ( 1, -4) {};
   \node (p5) at ( 3, -4) {};
    \node (D) at (3,-3.5) {$v_5$};
   \node (p6) at ( 11, -4) {};
   \node (E) at (12,-3.5) {$v_6$};
   \node (p7) at ( 13, -4) {};
   \node (p8) at ( 4.7, -2.9) {};
   \node (p9) at ( 9.3, -2.9) {};
   \node (p10) at ( 4.7, -5.1) {};
   \node (p11) at (9.3, -5.1) {};

\draw (1.5,-4) -- (4.5,-4);
\draw (9.5,-4) -- (12.5,-4);
\filldraw (1.5,-4) circle (1pt);
\filldraw (4.5,-4) circle (1pt);
\filldraw (9.5,-4) circle (1pt);
\filldraw (12.5,-4) circle (1pt);
   
\path[->,every loop/.style={looseness=20}]      (p3) edge      [in=-45,out=45,loop]       node [above right] {$v_2$} (p3);
\path[->,every loop/.style={looseness=20}]      (p3) edge      [in=135,out=-135,loop]       node [above left] {$v_1$} (p3);

\path[->,every loop/.style={looseness=20}]      (p1) edge      [in=135,out=-135,loop]       node [below left] {$e_1, w_{11}$} (p1);
\path[->,every loop/.style={looseness=20}]      (p2) edge      [in=-45,out=45,loop]      node [below right] {$e_2, w_{22}$} (p2);
\path[->]     (p1) edge      [bend left]       node [above]{$e_3,w_{21}$} (p2);
\path[->]      (p2) edge      [bend left]       node [below]{$e_4,w_{12}$} (p1);

\path[->,every loop/.style={looseness=20}]     (p4) edge      [in=135,out=-135,loop]       node [above left] {$e_1, w_{11}$} (p4);
\path[->,every loop/.style={looseness=20}]     (p7) edge      [in=-45,out=45,loop]       node [above right] {$e_2, w_{22}$} (p7);
\path[->]      (p8) edge      [bend left]       node [above]{$e_3,w_{12}$} (p9);
\path[->]      (p11) edge      [bend left]       node [below]{$e_4,w_{21}$} (p10);

\end{tikzpicture}
\caption{Three network structures representing the spread of two virus variants in one patch (namely $P=1$, $Q=2$). In the upper left a combinatorial graph $G$, in the upper right a metric graph $\mathcal{G}$, such that $L(\mathcal{G})=G$. In the lower picture a combinatorial graph with
locally one-dimensional vertices.}\label{fig:3networks}
\end{figure}

Based on Theorem~\ref{thm:flow-asy}, we estimate the influence of  the $q$-th virus variant in $p$-th patch. Therefore, we look at the dynamics on edge $e_{(q-1)P+p}$, $p=1,\ldots, P$, $q=1,\ldots,Q$, of metric graph $\mathcal{G}$ (equivalently in $v_{(q-1)P+p}$  vertex of $G$). 
If this edge does not belong to any terminal strong component $G_r$, we have two possible scenarios. In the first one, the $(q-1)P+p$-th coordinate of a function $u_{stab,f}$ vanishes for any $t\geq 0$. Consequently, the $q$-th variant of a virus will become extinct in the $p$-th patch in a predictable time horizon. Otherwise, the $q$-th variant will be vanishing gradually from the $p$-th patch. Consequently, in both presented cases, the $q$-th variant should not be taken into account for vaccination policies in patch~$p$.

Now, let us consider an edge $e_{(q-1)P+p}$ that belongs to a terminal strong component $G_r$. By Theorem~\ref{thm:flow-asy}, the densities on edges from $G_r$ change periodically, with period $\tau_r$, hence one can compute the average number of patients colonised by any variant of a virus in the $r$-th terminal strong component. If $f$ is an initial distribution of patients in all patches, then we define a mean $\Pi^r: L^1\bigl([0,1],\CC^m\bigr) \rightarrow \RR^m$
$$\Pi^r f=\frac{1}{\tau_r}\int_{0}^{\tau_r}\int_0^1 (T_r(s)f)(x)dx ds, \quad \textrm{for any}\,\, t\geq 0.$$

In the last step, we compare the average proportion of persons colonised by the $q$-th virus variant with the average proportion of all persons colonised by viruses enumerated by $q=1,\ldots,Q-1$ in the patch. The vector of averaged prevalence $\textrm{Prev}_f(p)=(\textrm{Prev}_{q,f}(p))_{q=1,\ldots,Q-1}$ in the $p$-th patch, with an initial distribution $f$, is given by
$$\textrm{Prev}_{q,f}(p):=\frac{\left(\sum_{i=1}^{\ell}\Pi^{r_i}f\right) \mathbf{1}_{(q-1)P+p}}{\left(\sum_{i=1}^{\ell}\Pi^{r_i}f\right)\,\left(\sum_{k=1}^{Q-1}\mathbf{1}_{(k-1)P+p}\right)},\quad q=1,\ldots,Q-1,$$
where $\mathbf{1}_{x}\in\RR^m$ is a vector having only one non-zero entry at the $x$-th coordinate. The largest value among all coordinates of  $\textrm{Prev}_f(p)$ indicates the virus variant that dominates in the $p$-th patch. Consequently, the authorities should provide the vaccine in patch $p$ that has the highest efficacy against that particular variant. 

\subsection{Laplace Equation on Graphs}
\label{ssLaplaceongraphs}

Second-order equations on graphs have generated significant interest among mathematicians in fields like partial differential equations, control theory and spectral theory, as well as among physicists, leading to a large body of literature. Such problems arise naturally when one considers heat conduction or propagation of waves through a one-dimensional system that can be modeled by a graph. Further applications are quantum wires, photonic crystals, carbon nano-structures, etc. Most of the early results are about the spectrum of the Laplacian on graphs, see e.g.~\cite{nicaise:85}, while \cite{KMS:07} is the first to deal with the asymptotic behavior of solutions to the diffusion problem.

We now consider the diffusion process on an interval along the arcs of a metric graph $\mathcal{G}$. 
We adopt the following notation for function $u_j$, defined on the edge $e_j$, parameterized on $[0,1]$:
\begin{equation*}
u_j(v_i)=\left\{\begin{array}{ll}
u_j(0)&\textrm{if } v_i\stackrel{e_j}{\rightarrow},\\
u_j(1)& \textrm{if } \stackrel{e_j}{\rightarrow}v_i.\\
\end{array}\right.
\quad
\frac{d}{ds}u_j(v_i)=\left\{\begin{array}{ll}
\frac{d}{ds}u_j(0)&\textrm{if } v_i\stackrel{e_j}{\rightarrow},\\
\frac{d}{ds}u_j(1)& \textrm{if } \stackrel{e_j}{\rightarrow}v_i,\\
\end{array}\right.
\end{equation*}
if, in the latter case, (one-sided) derivatives exist.
Let us first recall the definition of the (continuous) Laplacian $\Delta_{\mathcal{G}}$ on a metric graph $\mathcal{G}$ with Kirchhoff boundary conditions at the vertices. In the following, $C(\mathcal{G})$ denotes the space of continuous functions on the edges being continuous across the vertices as well, that is,
\[C(\mathcal{G}):=\{u=(u_{j})_{j\in J}\in \prod_{j\in J}C[0,1]:u_j(v_i)=u_k(v_i)\text{ if }\phi_{ij}\phi_{ik}\neq 0\},\]
where
\[\Phi=(\phi_{ij})_{i\in I, j\in J}=\Phi^+ -\Phi^-\]
denotes the incidence matrix of the underlying graph $G$.
Let
\begin{equation}\label{eq:LaplC_dom}
D(\Delta_\mathcal{G}):=\{u\in C(\mathcal{G}): u_{j}\in H^2(0, 1), \forall j\in J,\; 
\sum_{l\in J} \phi_{il} \frac{d}{ds}u_l(v_i)=0, \forall i\in I\}
\end{equation}
and
\begin{equation}\label{eq:LaplC}
\Delta_{\mathcal{G}} u:=\left(-\frac{d^2}{ds^2} u_{j}\right)_{j\in J}.
\end{equation}

It is not difficult to show that $\Delta_{\mathcal{G}}$ is a non-negative self-adjoint operator with a compact resolvent.

We first recall an old result from \cite{below:85,nicaise:85} in the same spirit as Theorem \ref{thmEszterMarjeta}. In the case when the lengths of the edges are all equal to 1, it gives the relationship between the spectrum of the operator $\Delta_{\mathcal{G}}$ and the eigenvalues of the so-called  \emph{transition matrix} of $G$, that is the matrix $\widetilde{\mathcal{A}}$ , defined as
\[
\widetilde{\mathcal{A}}:=\mathcal{D}^{-1}\mathcal{A},
\]
where $\mathcal{A}$ is the adjacency matrix of $G$, see \eqref{eq:unwundiradjacency}, and $\mathcal{D}$ is the degree matrix of $G$, see \eqref{eq:unwdegree}.

\begin{thm}
Assume that the lengths of the edges of the metric graph $\mathcal{G}$ are all equal to 1. Then
the spectrum $\sigma(\Delta_{\mathcal{G}})$ consists only of eigenvalues and is given by
\[
\sigma(\Delta_{\mathcal{G}})=S_1\cup S_2,
\]
where 
\[
S_1=\{\pi^2 k^2: k\in \mathbb{N}\},
\]
the multiplicity of 0 is 1, while the multiplicity of $\pi^2 k^2$ depends on the fact of whether $G$ is bipartite or not. Furthermore, 
\[
S_2=\{\lambda\in (0,\infty): \cos\sqrt{\lambda}\in \sigma(\widetilde{\mathcal{A}})\cap(-1,1)\}.
\]
\end{thm}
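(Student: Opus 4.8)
The plan is to exploit that $\Delta_{\mathcal{G}}$ is self-adjoint, non-negative and has compact resolvent (already established), so the spectrum consists only of isolated real eigenvalues of finite multiplicity lying in $[0,\infty)$; it therefore suffices to determine, for each $\lambda\ge0$, the solution space of the edgewise system $-u_j''=\lambda u_j$ under the continuity condition defining $C(\mathcal{G})$ and the Kirchhoff condition of $D(\Delta_{\mathcal{G}})$. On each edge, parameterised by $[0,1]$, the general solution is $u_j(s)=a_j\cos(\sqrt\lambda\,s)+b_j\sin(\sqrt\lambda\,s)$ for $\lambda>0$ and $u_j(s)=a_j+b_js$ for $\lambda=0$. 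I would then split the analysis according to whether $\sin\sqrt\lambda\ne0$ or $\sin\sqrt\lambda=0$; the latter condition describes precisely the candidate set $S_1=\{\pi^2k^2:k\in\NN\}$, and on $S_1$ one has $\cos\sqrt\lambda=\pm1$, so $S_1$ and $S_2$ are automatically disjoint.

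For the case $\sin\sqrt\lambda\ne0$ (which forces $\lambda>0$ and $\cos\sqrt\lambda\in(-1,1)$) the idea is to use continuity to attach a single value $p_i:=u_j(v_i)$ to each vertex. On an edge $e_j$ with tail $v_i$ and head $v_k$ this gives $a_j=p_i$ and $b_j=(p_k-p_i\cos\sqrt\lambda)/\sin\sqrt\lambda$, i.e. $u_j(s)=(p_i\sin(\sqrt\lambda(1-s))+p_k\sin(\sqrt\lambda\,s))/\sin\sqrt\lambda$. Differentiating, substituting the endpoint derivatives into $\sum_{l\in J}\phi_{il}\tfrac{d}{ds}u_l(v_i)=0$ and cancelling the common factor $\sqrt\lambda/\sin\sqrt\lambda$ should collapse the vertex conditions to $\cos\sqrt\lambda\,\mathcal{D}p=\mathcal{A}p$; since $G$ is connected all degrees are positive, $\mathcal{D}$ is invertible, and this reads $\widetilde{\mathcal{A}}p=\cos\sqrt\lambda\,p$. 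As $p=0$ forces $u=0$ and a nonzero $p$ yields a nonzero $u$, one obtains $\lambda\in\sigma(\Delta_{\mathcal{G}})\iff\cos\sqrt\lambda\in\sigma(\widetilde{\mathcal{A}})$, necessarily with $\cos\sqrt\lambda\in(-1,1)$, and with the multiplicity of $\lambda$ equal to that of $\cos\sqrt\lambda$ as an eigenvalue of $\widetilde{\mathcal{A}}$. This produces exactly $S_2$.

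For $\lambda\in S_1$ I would treat $\lambda=0$ first: there the affine ansatz plus continuity gives $b_j=p_k-p_i$ on $e_j=(v_i,v_k)$ and the Kirchhoff condition becomes $(\mathcal{D}-\mathcal{A})p=0$, whose kernel is spanned by the constant vector because $G$ is connected, so $0$ is an eigenvalue of multiplicity $1$. For $\lambda=\pi^2k^2$ with $k\ge1$ the crucial point is that $\sin(\pi k\,s)$ vanishes at both endpoints while $\cos(\pi k\,s)$ equals $1$ and $(-1)^k$ there; consequently continuity constrains only the $a_j$ (demanding that the quantities $\varepsilon^{(i)}_j a_j$ coincide around each vertex $v_i$, where $\varepsilon^{(i)}_j=1$ if $v_i$ is the tail and $(-1)^k$ if it is the head of $e_j$), while Kirchhoff constrains only the $b_j$, because the $a_j$-part of $u_l'$ vanishes at both endpoints. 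For even $k$ the $a$-constraint forces all $a_j$ equal, a one-dimensional space, and the $b$-constraint is $\Phi b=0$; for odd $k$ the $a$-constraint is a signed vertex labelling that is solvable (one-dimensional) exactly when $G$ is bipartite and trivial otherwise, while the $b$-constraint is $(\Phi^++\Phi^-)b=0$. Using $\dim\ker\Phi=m-n+1$ for a connected graph and the fact that the unsigned incidence matrix $\Phi^++\Phi^-$ has rank $n-1$ if $G$ is bipartite and $n$ if not, one adds the two independent contributions to obtain: multiplicity $m-n+2$ for every $k\ge1$ when $G$ is bipartite, and $m-n+2$ for even $k$ but $m-n$ for odd $k$ when $G$ is not bipartite. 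Combining all cases yields $\sigma(\Delta_{\mathcal{G}})\subseteq S_1\cup S_2$ with the stated multiplicities.

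I expect the main obstacle to be the resonant case $\lambda\in S_1$, $k\ge1$: there the vertex-value parametrisation used in the generic case degenerates, so one must carefully separate the Dirichlet-type standing waves (the $b$-modes) from the residual cosine modes and compute the two dimensions independently, and this is precisely where bipartiteness enters, through the rank of the unsigned incidence matrix. A secondary technicality is the treatment of loops, for which the Kirchhoff sum written through $\Phi=\Phi^+-\Phi^-$ should be interpreted using $\Phi^\pm$ separately; restricting attention to loopless $G$ removes this nuisance without changing the statement.
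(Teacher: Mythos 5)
Your proposal is correct and follows essentially the same route as the paper: for $\sin\sqrt\lambda\neq 0$ the paper's own sketch uses exactly your vertex-value ansatz $u_j(s)=\bigl(p_i\sin(\sqrt\lambda(1-s))+p_k\sin(\sqrt\lambda\,s)\bigr)/\sin\sqrt\lambda$ and the reduction of the Kirchhoff conditions to $\widetilde{\mathcal{A}}p=\cos\sqrt\lambda\,p$, which yields $S_2$. The resonant case $\lambda=\pi^2k^2$, which the paper delegates to the cited references \cite{below:85,nicaise:85}, is treated correctly in your separation into cosine modes (constrained only by continuity) and sine modes (constrained only by Kirchhoff), and your multiplicity count via $\dim\ker\Phi=m-n+1$ and the rank of the unsigned incidence matrix reproduces the classical values $m-n+2$ and $m-n$.
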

Let us notice that the main idea for the characterization of the family $S_2$ is to look for
an eigenvector $u$ in the form
\[u_j(s)=\frac{u_j(0) \sin (\sqrt{\lambda} (1-s))+u_j(1) \sin (\sqrt{\lambda} s)}{\sin \sqrt{\lambda}}, \quad s\in [0,1],\]
which is meaningful since $\sin \sqrt{\lambda}\neq 0$. By the continuity condition at the vertices, the remaining unknowns are the values of $u$
at the vertices and we easily check that the Kirchhoff conditions at the nodes lead to
\[
\widetilde{\mathcal{A}} X=\cos\sqrt{\lambda} X,
\]
where 
\begin{equation}\label{eq:vertexvalues}
 X=(u(v))_{v\in V}
\end{equation}
denotes the vertex values of $u$ which are well-defined for a function $u\in C(\mathcal{G})$.

This method also yields an explicit relationship between the eigenvector $u$ of $\Delta_{\mathcal{G}}$ associated with $\lambda$
and the eigenvector $X$ of $\widetilde{\mathcal{A}}$ associated with $\cos\sqrt{\lambda}$.

In the following, we consider a diffusion problem on $\mathcal{G}$ of the form
\begin{equation}\label{eq:diffusiongraph}
\left\{\begin{array}{rcll}
\frac{\partial}{\partial t}u_j(t,s) &=& \frac{\partial^2}{\partial s^2}u_j(t,s)
,&t >0,\ s\in(0,1),\\
u_j(v_i)&=&u_k(v_i)&\text{if }\phi_{ij}\phi_{ik}\neq 0,\\
0&=&\sum_{l\in J}\phi_{il}\frac{d}{ds}u_{l}(t,v_i),& t >0,
\\
u_j(0,s)&=& f_j(s), &s\in\left[0,1\right],
\end{array}
\right.
\end{equation}
where $j,k\in J$, $i\in I$. It can be easily seen that the above system is equivalent to the abstract Cauchy problem
\begin{equation}\label{eq:diffG}
\begin{cases}
\frac{d}{dt}u(t)=-\Delta_{p,\mathcal{G}} u,\\
 u(0)=f,
\end{cases}
\end{equation}
with state space $L^p\bigl([0,1],\CC^m\bigr)$, where
\begin{eqnarray}\label{eq:Lapl-Lp-dom}
\begin{array}{ll}
D(\Delta_{p,\mathcal{G}})=
\left\{u\in C(\mathcal{G}):\right. &u_{j}\in W^{2,p}(0, 1),\,\forall j\in J \\
&\left.\hbox{and } \sum_{l\in J} \phi_{il} \frac{d}{ds} u_l(v_i)=0,\, \forall i\in I\right\}\end{array}
\end{eqnarray}
and
\begin{equation}\label{eq:Lapl-Lp}
\Delta_{p,\mathcal{G}} u=\left(-\frac{d^2}{ds^2} u_{j}\right)_{j\in J}.
\end{equation}

In \cite[Thm.~3.6]{KMS:07}, the following result is shown, see also \cite[Cor.~2.13]{EKF:19} and \cite[Prop.~3.3]{KP:20}.
\begin{thm}
The first-order problem \eqref{eq:diffG} is well-posed on the space $L^p\bigl([0,1],\CC^m\bigr)$ for $1\leq p<+\infty$, i.e., for all initial data $f\in L^p\bigl([0,1],\CC^m\bigr)$ problem \eqref{eq:diffG} admits a unique mild solution that continuously depends on the initial data. The solutions on the appropriate spaces have the form 
\begin{equation}\label{eq:diffusiongraphsol}
 u_{p,f}(t)=T_p(t)f,\quad t\geq 0,
\end{equation} 
where $\left(T_p(t)\right)_{t\geq 0}$ is the strongly continuous semigroup generated by $-\Delta_{p,\mathcal{G}}$ on $L^p\bigl([0,1],\CC^m\bigr)$.
\end{thm}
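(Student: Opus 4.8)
The assertion is equivalent to saying that $-\Delta_{p,\mathcal{G}}$ is the generator of a strongly continuous semigroup on $L^p\bigl([0,1],\CC^m\bigr)$: by \cite[Cor.~II.6.9]{EN:06} generation is precisely equivalent to well-posedness of the abstract Cauchy problem \eqref{eq:diffG}, the unique mild solution is then necessarily $u_{p,f}(t)=T_p(t)f$, and continuous dependence on $f$ is just the semigroup bound $\|T_p(t)\|\le Me^{\omega t}$. So the whole task reduces to proving generation, and ``the appropriate spaces'' simply means $L^p\bigl([0,1],\CC^m\bigr)$ for each $1\le p<+\infty$. The plan is to establish generation at $p=2$ and then extrapolate.

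On $L^2\bigl([0,1],\CC^m\bigr)$ consider the symmetric sesquilinear form $a(u,v)=\sum_{j\in J}\int_0^1 u_j'(s)\overline{v_j'(s)}\,ds$ with form domain $V=\{u\in C(\mathcal{G}):u_j\in H^1(0,1)\ \forall j\in J\}$. It is densely defined, non-negative, symmetric and closed, and a one-line integration by parts identifies the self-adjoint operator associated with $a$ with $\Delta_{\mathcal{G}}$ of domain \eqref{eq:LaplC_dom} — vertex-continuity is built into $V$ and the Kirchhoff condition $\sum_{l}\phi_{il}\frac{d}{ds}u_l(v_i)=0$ appears as the natural (weak) boundary condition. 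Hence $-\Delta_{\mathcal{G}}=-\Delta_{2,\mathcal{G}}$ generates an analytic $C_0$-semigroup of contractions $(T_2(t))_{t\ge0}$ on $L^2$, with compact resolvent as already noted. Next one verifies the Beurling–Deny criteria: for $u\in V$ one has $|u|\in V$ with $a(|u|,|u|)\le a(u,u)$, and for real $u\in V$ one has $u\wedge 1\in V$ with $a(u\wedge 1,u\wedge1)\le a(u,u)$. The only graph-specific observation is that continuity across a vertex survives the pointwise operations $u\mapsto|u|$ and $u\mapsto u\wedge1$, so these functions remain in $C(\mathcal{G})$; the decrease of $a$ is the classical edgewise estimate. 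Thus $(T_2(t))$ is positive and $L^\infty$-contractive, i.e.\ sub-Markovian.

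A sub-Markovian $C_0$-semigroup on $L^2$ over the finite measure space $[0,1]$ extrapolates to a consistent family of $C_0$-contraction semigroups $(T_p(t))_{t\ge0}$ on $L^p\bigl([0,1],\CC^m\bigr)$ for all $1\le p<+\infty$, strong continuity at $p=1$ following from density of $L^2\cap L^1$ in $L^1$; this is the route of \cite{KMS:07} (see also \cite{EKF:19,KP:20}). The real work — and the main obstacle — is to show that the generator of $(T_p(t))$ is exactly $-\Delta_{p,\mathcal{G}}$ with the domain \eqref{eq:Lapl-Lp-dom}, i.e.\ to rule out a strictly larger domain. For this it suffices to prove that for $\lambda>0$ large the resolvent equation $(\lambda+\Delta_{p,\mathcal{G}})u=g$ has a unique solution $u$ lying in \eqref{eq:Lapl-Lp-dom}: on each edge one solves $\lambda u_j-u_j''=g_j$ explicitly, writing $u_j$ as a hyperbolic interpolation of its endpoint values $u_j(0),u_j(1)$ plus a particular term (the $\lambda>0$ analogue of the $\sin$-formula used above for eigenfunctions of $\Delta_{\mathcal{G}}$); continuity then reduces the unknowns to the vertex values $X=(u(v))_{v\in V}$, and the Kirchhoff conditions become a finite linear system for $X$ whose matrix is, for $\lambda$ large, strictly diagonally dominant (the diagonal carries $\sqrt\lambda\coth\sqrt\lambda$ against off-diagonal contributions $\sqrt\lambda/\sinh\sqrt\lambda$) and hence invertible, together with the edgewise elliptic regularity $u_j\in W^{2,p}(0,1)$. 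Consistency of these resolvents with the $L^2$-resolvent pins down the $L^p$-generator as $-\Delta_{p,\mathcal{G}}$, and generation — hence the theorem — follows.

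A self-contained alternative avoiding extrapolation is to apply the Lumer–Phillips theorem directly on $L^p$: dissipativity of $-\Delta_{p,\mathcal{G}}$ is obtained by pairing with the duality map $u^{\ast}=\|u\|_p^{2-p}\,|u|^{p-2}\bar u$ and integrating by parts (the boundary terms cancel by continuity plus Kirchhoff, the bulk term has the correct sign), while the range condition is again the solvability of the same vertex linear system; this packages the argument without interpolation but meets essentially the same core computation.
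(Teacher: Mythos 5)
Your proposal is correct and is essentially the argument behind the paper's statement: the paper gives no proof of its own but refers to \cite[Thm.~3.6]{KMS:07} (see also \cite[Cor.~2.13]{EKF:19}, \cite[Prop.~3.3]{KP:20}), and those works proceed exactly along your lines --- a closed, symmetric, non-negative form on $L^2\bigl([0,1],\CC^m\bigr)$ whose form domain encodes vertex continuity and whose natural boundary conditions are the Kirchhoff conditions, positivity and $L^\infty$-contractivity via the Beurling--Deny criteria, and extrapolation of the resulting sub-Markovian, self-adjoint semigroup to a consistent family of $C_0$-contraction semigroups on $L^p$ for $1\le p<+\infty$. Your explicit edgewise resolvent computation with the diagonally dominant vertex system (and the Lumer--Phillips variant) is a sound, self-contained way of pinning the $L^p$-generator down to the domain \eqref{eq:Lapl-Lp-dom}, a step the references handle by analogous consistency arguments.
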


By \cite[Cor.~5.2]{KMS:07}, we can also describe the asymptotic behaviour of the solutions to \eqref{eq:diffG}. 
\begin{thm}
Let $f\in L^p\bigl([0,1],\CC^m\bigr)$, $1\leq p<+\infty$, be arbitrary. For solutions of problem \eqref{eq:diffG} with the initial value $f$ the following holds:
\begin{enumerate}
\item There exists a limit $\lim\limits_{t\to+\infty}u_{p,f}(t)=\mathcal{P}f$.
\item $\mathcal{P}$ is the strictly positive projection onto the one-dimensional subspace of $L^p\bigl([0,1],\CC^m\bigr)$ spanned by the constant function $1$, which is the kernel of $\Delta_{p,\mathcal{G}}$.
\item For every $\varepsilon>0$ there exists $M>0$ such that
\[\|u_{p,f}(t)-\mathcal{P}f\|\leq M\mathrm{e}^{(\varepsilon-\lambda_2)t},\]
where $\lambda_2$ is the smallest positive eigenvalue of $\Delta_{\mathcal{G}}$.
\end{enumerate}
\end{thm}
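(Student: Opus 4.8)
The plan is to read off the asymptotics from the spectral structure of the generator $-\Delta_{p,\mathcal G}$ and the regularity of the semigroup $(T_p(t))_{t\ge 0}$, settling everything on $L^2$ first and then transferring to general $p$. First I would identify the kernel: if $u\in\ker\Delta_{p,\mathcal G}$ then each $u_j$ is affine on its edge, and the continuity conditions at the vertices together with the Kirchhoff conditions $\sum_{l}\phi_{il}\tfrac{d}{ds}u_l(v_i)=0$ force, on the connected graph $G$, all $u_j$ to be one and the same constant; hence $\ker\Delta_{p,\mathcal G}=\mathrm{span}\{\mathbbm 1\}$ is one-dimensional. From the spectral characterisation recalled just above (for unit edge lengths, $\lambda\in\sigma(\Delta_{\mathcal G})$ outside $S_1$ iff $\cos\sqrt\lambda\in\sigma(\widetilde{\mathcal A})\cap(-1,1)$), together with self-adjointness and compactness of the resolvent of $\Delta_{\mathcal G}$, the spectrum is a discrete set of nonnegative eigenvalues, $0$ is isolated and simple, and the next one is $\lambda_2>0$: there is a genuine spectral gap. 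Let $\mathcal P$ be the Riesz spectral projection of $\Delta_{\mathcal G}$ associated with the eigenvalue $0$; it projects onto $\mathrm{span}\{\mathbbm 1\}$, commutes with the semigroup, and yields an invariant decomposition $L^p([0,1],\CC^m)=\mathrm{span}\{\mathbbm 1\}\oplus X_0$ with $X_0:=\mathrm{Range}(I-\mathcal P)$. Because the semigroups $(T_p(t))$ are consistent across $p$ and the eigenfunctions are smooth on each edge, $\sigma(-\Delta_{p,\mathcal G})=\{0,-\lambda_2,-\lambda_3,\dots\}$ is independent of $p$ and $\mathcal P$ is the same operator on every $L^p$.

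For the decay (items (1) and (3)): on $L^2$ the spectral theorem gives $\|T_2(t)(I-\mathcal P)\|=e^{-\lambda_2 t}$ outright, so $u_{2,f}(t)\to\mathcal Pf$ even with $\varepsilon=0$. For general $1\le p<+\infty$ I would use that the diffusion semigroup on a compact metric graph is immediately (hence eventually) compact — for $t>0$ it has a bounded continuous integral kernel, equivalently it maps $L^p$ into $D(\Delta_{p,\mathcal G})$, which embeds compactly into $L^p$. An eventually compact semigroup satisfies the spectral mapping theorem, so its growth bound equals the spectral bound of the generator; applying this to the part of $-\Delta_{p,\mathcal G}$ in the invariant subspace $X_0$, whose spectrum is $\{-\lambda_2,-\lambda_3,\dots\}$ with spectral bound $-\lambda_2$, gives, for each $\varepsilon>0$, a constant $M$ with $\|T_p(t)(I-\mathcal P)\|\le M e^{(\varepsilon-\lambda_2)t}$. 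Since $T_p(t)\mathcal Pf=\mathcal Pf$, writing $u_{p,f}(t)=\mathcal Pf+T_p(t)(I-\mathcal P)f$ delivers both the limit and the rate.

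For strict positivity of $\mathcal P$ (item (2)): the semigroup $(T_p(t))$ is positive (the associated form is a Dirichlet form, or use the parabolic maximum principle) and irreducible — on a connected metric graph the only closed $T_p(t)$-invariant ideals of $L^p$ are $\{0\}$ and the whole space, since such an ideal has the form $\{u:u=0\text{ a.e.\ on }\omega\}$ and invariance under diffusion over the connected graph forces $\omega$ to be null or co-null. By the Perron--Frobenius theory for positive irreducible semigroups with compact resolvent, the spectral bound $s(-\Delta_{p,\mathcal G})=0$ is a dominant, algebraically simple eigenvalue with a strictly positive eigenvector and a strictly positive eigenvector of the adjoint; as $\ker\Delta_{p,\mathcal G}=\mathrm{span}\{\mathbbm 1\}$ and the adjoint is $-\Delta_{p',\mathcal G}$ with kernel $\mathrm{span}\{\mathbbm 1\}$ as well, one obtains $\mathcal Pf=\tfrac1{m}\left(\int_{\mathcal G}f\right)\mathbbm 1$, which is strictly positive.

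Everything is transparent on $L^2$; the actual work — and the step I expect to be the main obstacle — is transporting the sharp decay rate to all $p\in[1,+\infty)$. This requires the spectral mapping theorem for $(T_p(t))$ (via immediate/eventual compactness, or analyticity, the latter being delicate at $p=1$) together with $p$-independence of the spectrum, and, interlocked with it, irreducibility of the diffusion semigroup on the connected metric graph, which is what upgrades simplicity of the eigenvalue $0$ to strict positivity of $\mathcal P$. Establishing the eventual norm-continuity / compactness of the $L^p$-semigroup for all $p$ (or, pragmatically, invoking the cited $L^p$-results) is the technical heart of the argument.
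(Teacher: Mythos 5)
Your proposal is correct and follows essentially the same route as the proof behind this statement: the paper itself gives no argument but cites \cite[Cor.~5.2]{KMS:07}, whose proof rests, like yours, on form/semigroup methods — self-adjointness on $L^2$, positivity and irreducibility of the heat semigroup on the connected metric graph, compactness/analyticity giving the spectral mapping theorem and $p$-independence of the spectrum, the one-dimensional kernel spanned by $\mathbbm{1}$, and the spectral gap $\lambda_2$, extrapolated to all $L^p\bigl([0,1],\CC^m\bigr)$, $1\leq p<+\infty$. No essential step is missing.
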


Note that the situation here essentially differs from the asymptotic behavior for the transport equation presented in Theorem \ref{thm:flow-asy}. In the case of transport, the behavior was heavily based on condition \eqref{ldq}, which encompasses the structure of the graph and the rational dependence of the corresponding velocity coefficients. In the case of diffusion, we always observe convergence towards projection to a one-dimensional subspace, independently of the structure or possible diffusion coefficients on the edges of the graph (for simplicity, we have taken all the coefficients equal to 1 but the asymptotic result holds even in the case of different time-dependent coefficients, see \cite[Thm.~5.4]{ADK:14}).

In \cite{KS23}, the Kirchhoff boundary condition of the diffusion problem \eqref{eq:diffusiongraph} in each vertex were perturbed by noise, that is, the following system was considered:
\begin{equation}\label{eq:diffusiongraphnoise}
\left\{\begin{array}{rcll}
\frac{\partial}{\partial t}u_j(t,s) &=& \frac{\partial^2}{\partial s^2}u_j(t,s)
,&t >0,\ s\in(0,1),\\
u_j(v_i)&=&u_k(v_i)&\text{if }\phi_{ij}\phi_{ik}\neq 0,\\
\dot{\beta}_{v_i}(t) &=&\sum_{l\in J}\phi_{il}\frac{d}{ds}u_{l}(t,v_i),& t\in (0,T],
\\
u_j(0,s)&=& f_j(s), &s\in\left(0,1\right),
\end{array}
\right.
\end{equation}
where $j,k\in J$, $i\in I$ and 
\[(\beta(t))_{t\in [0,T]}=\left(\left(\beta_{v_i}(t)\right)_{t\in [0,T]}\right)_{i\in I},\] 
is an $\RR^{n}$-valued Brownian motion (Wiener process) on an appropriate complete probability space. For an initial function $f\in L^2\bigl([0,1],\CC^m\bigr)$, the mild solution of problem \eqref{eq:diffusiongraphnoise} has the form
\begin{equation}
 \tilde{u}_{f}(t)=u_{2,f}(t)+\int_0^t (\lambda+\Delta_{\mathcal{G}}) S(t-s)D_{\lambda}\, d\beta (s),\quad t\geq 0,
\end{equation}
where $u_{2,f}$ is the solution of the diffusion problem on $L^2$ from \eqref{eq:diffusiongraphsol}, $\lambda>0$ is arbitrary, and $D_{\lambda}$ is the so-called Dirichlet-operator. In \cite[Thm.~3.8]{KS23}  the following result was proved.

\begin{thm}
 Let $(f_k)$ be a complete orthonormal system consisting of the eigenfunctions of $\Delta_{\mathcal{G}}$ in $L^2\bigl([0,1],\CC^m\bigr)$, and assume that there exists a positive constant $c$ such that 
 \[\sup\{|f_k(v)|:v\in V,\, k\in \NN\}\leq c.\]
Then, for $\alpha<\frac{1}{4}$ the mild solution $\tilde{u}_{f}$ has a continuous version in the fractional domain space $D\left(\Delta_{\mathcal{G}}^{\alpha}\right)$.
\end{thm}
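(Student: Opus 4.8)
The plan is to treat the two summands of $\tilde u_f$ separately, the whole difficulty lying in the stochastic convolution. The deterministic part $u_{2,f}(t)=T_2(t)f$ is controlled by the analyticity of the diffusion semigroup generated by $-\Delta_{\mathcal{G}}$ on $L^2\bigl([0,1],\CC^m\bigr)$: it sends $L^2$ into $D(\Delta_{\mathcal{G}}^{\alpha})$ smoothly for $t>0$, so $t\mapsto u_{2,f}(t)$ is continuous into $D(\Delta_{\mathcal{G}}^{\alpha})$ for positive times (and on all of $[0,T]$ once $f\in D(\Delta_{\mathcal{G}}^{\alpha})$). It thus remains to produce a continuous $D(\Delta_{\mathcal{G}}^{\alpha})$-valued version of
\[
W_\Delta(t):=\int_0^t(\lambda+\Delta_{\mathcal{G}})S(t-s)D_{\lambda}\,d\beta(s),\qquad t\ge 0,
\]
and for this I would use the factorization method. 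Writing $G(\tau):=(\lambda+\Delta_{\mathcal{G}})S(\tau)D_{\lambda}$ and noting that $(\lambda+\Delta_{\mathcal{G}})$ commutes with the semigroup, so that $S(\sigma)G(\tau)=G(\sigma+\tau)$, one obtains for every $\delta\in(0,\tfrac12)$ the identity
\[
W_\Delta(t)=\frac{\sin\pi\delta}{\pi}\int_0^t(t-\sigma)^{\delta-1}S(t-\sigma)\,Y(\sigma)\,d\sigma,
\qquad
Y(\sigma):=\int_0^{\sigma}(\sigma-r)^{-\delta}\,G(\sigma-r)\,d\beta(r),
\]
the $\sigma$-integral of the kernel being a Beta integral. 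It then suffices to show that $Y\in L^p\bigl(0,T;D(\Delta_{\mathcal{G}}^{\alpha})\bigr)$ almost surely for some large $p$, and to invoke the standard fact that, for an analytic semigroup, the operator $R_\delta$ given by $(R_\delta g)(t):=\int_0^t(t-\sigma)^{\delta-1}S(t-\sigma)g(\sigma)\,d\sigma$ is bounded from $L^p\bigl(0,T;D(\Delta_{\mathcal{G}}^{\alpha})\bigr)$ to $C\bigl([0,T];D(\Delta_{\mathcal{G}}^{\alpha})\bigr)$ whenever $\delta>1/p$.

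The core estimate is a bound on $G(\tau)$ in $D(\Delta_{\mathcal{G}}^{\alpha})$. Expanding in the orthonormal eigenbasis with $\Delta_{\mathcal{G}}f_k=\lambda_k f_k$, a Green's identity on each edge — exploiting that $v:=D_\lambda\xi$ solves $(\lambda+\Delta_{\mathcal{G}})v=0$ edge by edge with prescribed Kirchhoff defect $\xi$ at the vertices, while each $f_k$ is continuous across the vertices and satisfies the Kirchhoff conditions — gives
\[
\langle D_\lambda\xi,f_k\rangle_{L^2}=\frac{1}{\lambda+\lambda_k}\sum_{v\in V}\xi_v\,\overline{f_k(v)},\qquad \xi\in\RR^{n}.
\]
The hypothesis $\sup_{k,v}|f_k(v)|\le c$ turns this into $|\langle D_\lambda\xi,f_k\rangle|\le c\sqrt{n}\,|\xi|/(\lambda+\lambda_k)$, so the factor $(\lambda+\lambda_k)$ present in $G(\tau)$ is cancelled and
\[
\bigl\|(\mathrm{Id}+\Delta_{\mathcal{G}})^{\alpha}G(\tau)\bigr\|
\le C\Bigl(\sum_k(1+\lambda_k)^{2\alpha}e^{-2\tau\lambda_k}\Bigr)^{1/2}
\le C'\,\tau^{-(\alpha+\frac14)},\qquad 0<\tau\le T,
\]
where the last step uses the Weyl asymptotics $\lambda_k\asymp k^{2}$ for the metric-graph Laplacian, so that $\sum_k(1+\lambda_k)^{2\alpha}e^{-2\tau\lambda_k}\asymp\sum_k k^{4\alpha}e^{-c\tau k^{2}}\lesssim\tau^{-(2\alpha+1/2)}$; the same bound holds for the Hilbert--Schmidt norm since $\beta$ is finite-dimensional.

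Granting this estimate, the integrability of $Y$ is routine. Since $\beta$ is an $\RR^{n}$-valued Brownian motion, the It\^o isometry together with the equivalence of Gaussian moments gives, for every $p\ge 2$,
\[
\mathbb{E}\,\|Y(\sigma)\|_{D(\Delta_{\mathcal{G}}^{\alpha})}^{\,p}
\le C_p\Bigl(\int_0^{\sigma}(\sigma-r)^{-2\delta}\bigl\|(\mathrm{Id}+\Delta_{\mathcal{G}})^{\alpha}G(\sigma-r)\bigr\|^2\,dr\Bigr)^{p/2}
\le C_p\Bigl(\int_0^{\sigma}(\sigma-r)^{-2\delta-2\alpha-\frac12}\,dr\Bigr)^{p/2},
\]
and the right-hand side is finite and bounded uniformly on $[0,T]$ exactly when $2\delta+2\alpha+\tfrac12<1$, i.e.\ $\delta<\tfrac14-\alpha$, which is possible precisely because $\alpha<\tfrac14$. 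Fixing such a $\delta$ and using Fubini yields $\mathbb{E}\int_0^T\|Y(\sigma)\|^p_{D(\Delta_{\mathcal{G}}^{\alpha})}\,d\sigma<\infty$, hence $Y\in L^p\bigl(0,T;D(\Delta_{\mathcal{G}}^{\alpha})\bigr)$ a.s.; choosing $p>1/\delta$ and applying $R_\delta$ produces the continuous version of $W_\Delta$, and adding back $u_{2,f}$ completes the argument.

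I expect the main obstacle to be the Green's-identity computation: one must set up the Dirichlet (Neumann-data) operator $D_\lambda$ correctly and check that integrating by parts twice on each edge makes all boundary contributions collapse to $\sum_{v}\xi_v\overline{f_k(v)}$ — the continuity of $f_k$ across the vertices eliminates the term pairing $D_\lambda\xi$ with $f_k'$, whereupon the Kirchhoff condition on $f_k$ makes that term vanish, while the Kirchhoff defect of $D_\lambda\xi$ produces the surviving term — and then verify $\lambda_k\asymp k^{2}$ in the metric-graph setting. It is exactly here that the uniform eigenfunction bound is used, converting the $(\lambda+\lambda_k)^{-1}$ gain into the decay $\tau^{-(\alpha+1/4)}$ that forces the threshold $\alpha<\tfrac14$; everything else is standard stochastic-convolution bookkeeping.
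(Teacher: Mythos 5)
The paper does not actually prove this theorem---it is quoted verbatim from the reference \cite[Thm.~3.8]{KS23}---so there is no in-text proof to compare against; judged on its own merits, your proposal reconstructs what is essentially the standard (and, as far as the stated ingredients go, the cited) argument, and it is sound. The three pillars are all in place and correctly used: the Green's-identity computation giving $\langle D_\lambda\xi,f_k\rangle=(\lambda+\lambda_k)^{-1}\sum_{v\in V}\xi_v\overline{f_k(v)}$, which is exactly where the hypothesis $\sup_{k,v}|f_k(v)|\le c$ enters and cancels the unbounded factor $(\lambda+\Delta_{\mathcal{G}})$ in the stochastic convolution; the Weyl asymptotics $\lambda_k\asymp k^2$ for the Kirchhoff Laplacian on a compact metric graph, which turn the eigenvalue sum into the decay $\tau^{-(\alpha+1/4)}$; and the Da Prato--Zabczyk factorization method, which converts square-integrability of that singularity (possible precisely when $\alpha<\tfrac14$, with $\delta<\tfrac14-\alpha$ and $p>1/\delta$) into a continuous $D(\Delta_{\mathcal{G}}^{\alpha})$-valued version. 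Two minor points to tidy up: in your closing paragraph the roles of the vertex conditions are swapped---the boundary term pairing $D_\lambda\xi$ with $f_k'$ dies because $D_\lambda\xi$ is continuous across the vertices and $f_k$ satisfies the Kirchhoff condition, while the continuity of $f_k$ is what collapses the other term to $\sum_v\xi_v\overline{f_k(v)}$ (the displayed identity itself is correct); and for the deterministic part, continuity into $D(\Delta_{\mathcal{G}}^{\alpha})$ at $t=0$ requires either $f\in D(\Delta_{\mathcal{G}}^{\alpha})$ or interpreting the statement on $(0,T]$, a caveat you flag but that should be stated as part of the theorem's reading rather than left open.
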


\section{PDEs on Embedded Metric Graphs}\label{sec:embed_PDEs}


\subsection{Traffic Analysis}\label{subsec:traffic}

An example application of metric graphs that are embedded into the plane is traffic models, in particular macroscopic ones, which are usually based on partial differential equations describing the evolution of aggregated values such as traffic density, traffic flow, and average speed. 
In this article, we describe the study \cite{vandissel2024paynewhitham} of Cartier van Dissel, Gora and Manea, focusing on one of the popular macroscopic traffic simulation models, the Payne-Whitham model \cite{Payne1971,Whitham1999}, originally designed for modelling highway traffic on straight road segments.
In \cite{vandissel2024paynewhitham}, a new model is derived from the one in \cite{KotsialosPapageorgiou}, with the main novelty being that some of the road segments are controlled by traffic signals. In particular, the study focuses on how drivers react to the change of state of a traffic light and improves the model from \cite{KotsialosPapageorgiou} by taking into account the reduction of speed when cars enter a junction and turn left or right.

The Payne-Whitham model on a single edge (i.e., road segment) consists of a conservation law for density, together with a characterisation of the instantaneous variation of speed based on the behaviour of drivers in terms of their adaptation to the current density and also their ability to anticipate the changes in density upstream. The equations are the following:
\begin{align}
\label{eq:density_continuouos}
&\frac{\partial \rho(t,s)}{\partial t} +\frac{\partial(v(t,s)\rho(t,s))}{\partial s}=0\\
\label{eq:speed_continuouos}
&\frac{\partial v(t,s)}{\partial t}+v(t,s)\frac{\partial(v(t,s))}{\partial s} = \frac{V(\rho(t,s))-v(t,s)}{\nu} - \frac{C}{\rho(t,s)} \frac{\partial \rho(t,s)}{\partial s},
\end{align}
where the function $V$ represents the theoretical relationship between density and speed on a road segment, commonly referred to as the 'fundamental diagram of traffic flow.' This relationship captures the variation in speed with respect to density based on basic intuitive principles, such as the observation that vehicles move slowly in congested traffic, but does not explicitly account for interactions between vehicles. In this work, $V$ is assumed to have the same form as in \cite{KotsialosPapageorgiou}, specifically:
\begin{equation}
\label{eq:fundamentalDiagram}
V(\rho)=v_{max}\exp\left(-\frac{1}{a}\left(\frac{\rho}{\rho_{cr}}\right)^a\right),
\end{equation}
where $v_{max}$ (the maximum speed), $\rho_{cr}$ (the density where the speed starts to decrease considerably) and $a$ (characterising the steepness of this decrease) are positive parameters inherent to every edge of the road network (see \cite[Section 2.1]{vandissel2024paynewhitham} for more details).
The meaning of the conservation law \eqref{eq:density_continuouos} is that the density of cars propagates with the speed given by $v(t,s)$. The speed likewise propagates with the traffic flow, but its variation also depends on other factors given on the right-hand side (RHS) of \eqref{eq:speed_continuouos}. In particular, the first term on the RHS of \eqref{eq:speed_continuouos} drives the speed towards the ideal speed given by the fundamental diagram \eqref{eq:fundamentalDiagram}. The strength of this adaptive behaviour is controlled by the parameter $\nu>0$, which must be calibrated to describe the real-world situation of interest. The second term on the RHS of \eqref{eq:speed_continuouos} represents the ability of drivers to anticipate the traffic conditions ahead and adapt their speed to the change in density. The strength of this ability to anticipate is described by the parameter $C>0$.

Next, an explicit finite difference scheme is used to compute the density and speed at the next time step $k+1$ with respect to the current state (at time step $k$) for each road $e$ uniformly discretised into $N_e$ segments indexed by $i\in \{1,2,\ldots,N_e\}$ (where vehicles travel from segment $i-1$ directly to segment $i$):

\begin{equation}
\label{eq:density_numerical}
\frac{\rho(k+1,i)-\rho(k,i)}{\delta t}+\frac{\rho(k,i)v(k,i)-\rho(k,i-1)v(k,i-1)}{\delta s}=0;
\end{equation}

\begin{equation}
\label{eq:speed_numerical}
\begin{aligned}&\frac{v(k+1,i)-v(k,i)}{\delta t}+\frac{v(k,i)^2-v(k,i-1)^2}{2\delta s}\\
&\quad\quad=\frac{V(\rho(k,i+1))-v(k,i)}{\nu}-\frac{C}{\rho(k,i)+\chi}\frac{\rho(k,i+1)-\rho(k,i)}{\delta s}.
\end{aligned}
\end{equation}
Here $\delta t$ and $\delta s$ are the discretisation grid sizes and $\chi>0$ is a parameter used to avoid blow-ups in the last term of \eqref{eq:speed_numerical}. It may also need to be calibrated to reflect the local situation. Refer to \cite[Section 3]{vandissel2024paynewhitham} for more details about the discretisation technique and the stability conditions for \eqref{eq:density_numerical}-\eqref{eq:speed_numerical}.

\begin{figure}
\begin{center}
\includegraphics[scale=0.36]{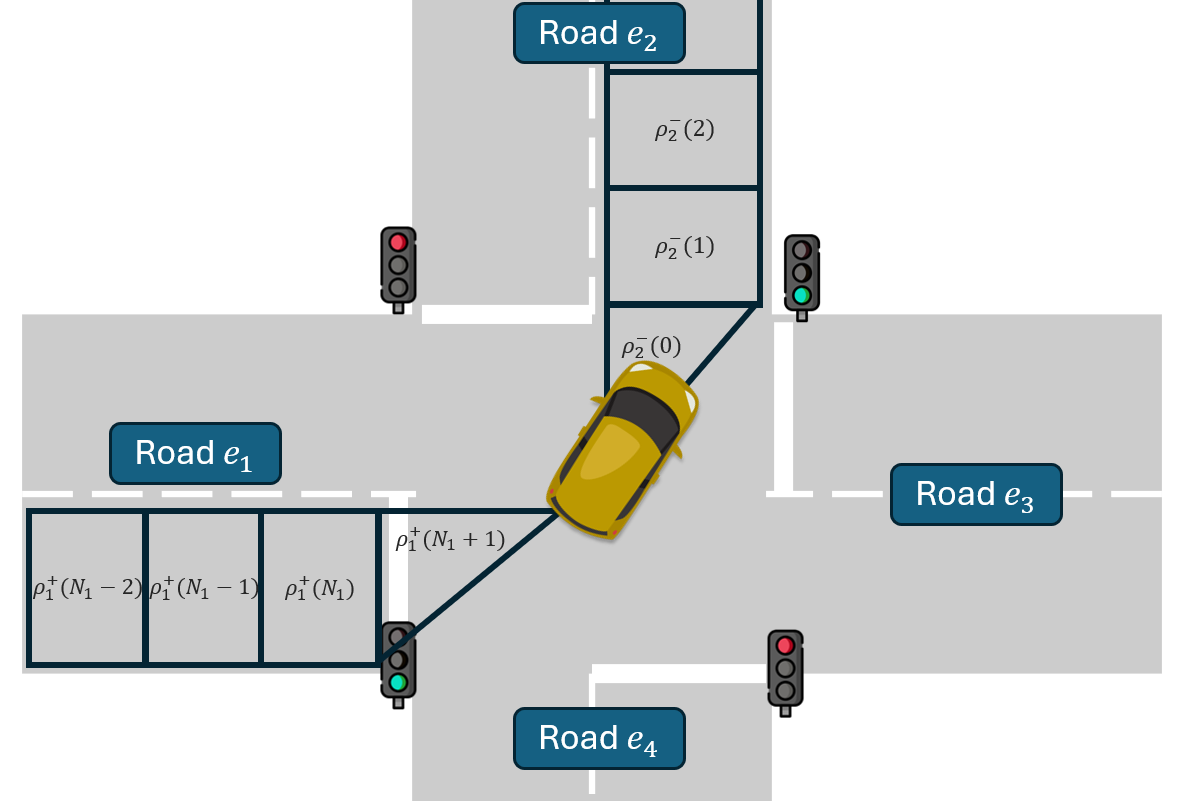}

\caption{An intersection with the virtual initial and final densities $\rho_2^{-}(0)$ and $\rho_{1}^{+}(N_1+1)$, respectively. The signs $\pm$ in the notation are used to help differentiate between the incoming and outgoing sides of each street, in relation to the depicted intersection.}
 \label{fig:intersection1}
\end{center}
\end{figure}


\subsubsection{Coupling at Intersections}\label{sec:intersections}
In the case of a road network, the number of lanes on each street might differ and cars need to adapt to changes in that number by modifying their speed. Therefore, the network version of the Payne-Whitham model should take into account the number of lanes $\ell(e)$ corresponding to each edge $e$ of the graph. The next step is to write equation \eqref{eq:density_numerical} in terms of the traffic flux (i.e. the number of cars passing on all lanes in each instant):
\begin{equation}
\label{eq:flux_numerical}
q(k,i)=\rho(k,i)v(k,i)\ell(e).
\end{equation}
where $\rho(k,i)$ is the density per lane averaged over all lanes. Therefore, equation \eqref{eq:density_numerical} can be rewritten as:
\begin{equation}
\label{eq:density_numerical_flux}
\frac{\rho(k+1,i)-\rho(k,i)}{\delta t}+\frac{1}{\ell(e)}\frac{q(k,i)-q(k,i-1)}{\delta s}=0.
\end{equation}
Further, it can be observed that some quantities needed to perform the iteration in \eqref{eq:density_numerical_flux} and \eqref{eq:speed_numerical} are not available from the previous time step. Namely, in order to compute the density and speed for both ends of a road segment $e$ (parameterised by $\{1,2,\ldots, N_e\}$), one needs to consider some virtual values for $q(k,0)$, $\rho(k,N_e+1)$ and $v(k,0)$.

In the case of a lack of traffic lights, we follow the ideas in \cite{vandissel2024paynewhitham,KotsialosPapageorgiou}, where these virtual values are defined as weighted sums of the traffic values on the other road segments adjacent to the intersection in question. We illustrate these calculations for the particular case of the intersection in Figure \ref{fig:intersection1}:
\begin{align}
\label{eq:density_virtual_initial}
q_{2}^-(k,0) &=\sum_{i=1}^4 q_{i}^+(k,N_{i})\cdot \frac{\omega(e_i,e_2)}{\sum_{j=1}^4 \omega(e_i,e_j)};\\
\label{eq:speed_virtual_initial}
v_{2}^-(k,0) &=\frac{1}{q_{2}^-(k,0)}\sum_{i=1}^4 v_{i}^+(k,N_{i})\cdot q_{i}^+(k,N_{i})\cdot \frac{\omega(e_i,e_2)}{\sum_{j=1}^4 \omega(e_i,e_j)},
\end{align}
where we recall that the flux $q$ is computed by the formula \eqref{eq:flux_numerical}. A similar approach is employed to calculate $\rho_{1}^+(k,N_{1}+1)$ (see \cite[Section 4.1]{vandissel2024paynewhitham}). The turning weights $\omega(e_i,e_j)$ represent the number of cars that choose to continue their trip on road $e_j$, after entering that particular intersection from road $e_i$. These weights are determined by processing real-world traffic data.

\subsubsection{Limiting the Turning Speed} The first improvement in \cite{vandissel2024paynewhitham} to the algorithm in \cite{KotsialosPapageorgiou} concerns the maximum speed of cars changing direction at an intersection. The virtual initial speed $v(k,0)$ is limited according to the speed at the other edges in the intersection and the geometry thereof. We illustrate the procedure using the notation in Figure \ref{fig:intersection1}: the virtual initial speed $v_2^-(k,0)$ must not exceed the following value:
\begin{align}
\label{eq:speed_limit_initial}
&v^{lim}_{2}(k,0)\coloneqq\frac{1}{q_{2}^-(k,0)}\sum_{i=1}^4 v^{max}_{i}\frac{(1-\cos(\widehat{e_i,e_2}))}{2}q_{i}^+(k,N_{i})\cdot \frac{\omega(e_i,e_2)}{\sum_{j=1}^4 \omega(e_i,e_j)},
\end{align}
where $v^{max}_{i}$ is the speed at which cars normally travel when the road $e_i$ is empty and by $\widehat{e_i,e_j}$ we understand the angle between the edges $e_i$ and $e_j$ in the traffic network. See \cite[Section 4.2]{vandissel2024paynewhitham} for more details about this way of limiting speed at intersections.

\subsubsection{Adding Traffic Lights}

The major contribution in \cite{vandissel2024paynewhitham} to the traffic simulation model is introducing traffic signal control for some intersections. The modifications imply both the speed values at the end of the roads controlled by the traffic lights and the weighted sums \eqref{eq:density_virtual_initial}-\eqref{eq:speed_limit_initial}. If the signal is red, the final speed $v(N_e)$ of edge $e$ is set to $0$ and $e$ is not taken as an input in the weighted sums for the virtual density and speed of the other edges (i.e., we set $\omega(e,e_i)=0$ for each road $e_i$).

\subsection{Approximating PDEs on $\RR^2$ with Equations on Embedded Graphs}\label{subsec:approx}

Another application of embedded graphs comes as a follow-up to the convergence results in Section \ref{sec:PDEs}. This time, we approximate solutions of PDEs in a two-dimensional space using a coupled system of one-dimensional equations on a grid contained within $\RR^2$.
In this context, we highlight the results in \cite{Nic240} on the Dirichlet problem in the square $\mathcal{S}=(0,1)\times(0,1)$
\begin{equation}
\label{eq:dirichlet-problem-square}
\begin{cases}
    -\Delta u = f, & \text{on } \mathcal{S};\\
    u = 0, & \text{on }\partial \mathcal{S},
\end{cases}
\end{equation}
where $f$ is a continuous function on $\overline{\mathcal{S}}$.

In order to approximate the solution of the above problem, we consider the traditional equidistant grid in the square, but now seen as a metric graph $\mathcal{G}_h$. Namely, for $N\geq 3$, we set the grid width $h=1/N$ and the digraph $\mathcal{G}_h=(V_h,E_h,\Phi^{\pm}_h,W_h)$, defined as follows (see Figure \ref{fig:discretise_square}):

The set of vertices
\begin{eqnarray*}
    V_h  &\coloneqq& \left\{v_i=\left(v_i^1,v_i^2\right): i\in I^h\right\}\\
    &\coloneqq& \left\{(kh,jh): k,j\in \mathbb{N}\cap [0,N] \right\}\setminus\{ (0,0), (0,Nh), (Nh,0), (Nh,Nh)\}
\end{eqnarray*} 
gets divided into two components, the set of internal and boundary vertices, respectively:
\[\begin{aligned}
V_h^{ext} &\coloneqq \left\{v\in V_h: \,\,v^1\in \{0,N\}\text{ or }v^2\in \{0,N\}\right\};\\
V_h^{int} &\coloneqq V_h\setminus V_h^{ext}.
\end{aligned}\]
The edges connect every two adjacent (in the sense of coordinate number) vertices that are not both exterior nodes, namely
\begin{eqnarray*} E_h &\coloneqq &\left\{e_j:\,j\in J^h\right\}\\
&\coloneqq&\left\{(v_i,v_k): \,\,\left|v_i^1-v_k^1\right|+\left|v_i^2-v_k^2\right|=1, \,\,\text{and } v_i \text{ or } v_k \in V_h^{int} \right\}.
\end{eqnarray*}

\begin{figure}[h]
\centering
 \begin{tikzpicture}[scale=1]
 \draw[-Stealth] (-5,0) -- (1,0) node[right] {$x$};
\draw[-Stealth] (-4,-1) -- (-4,5) node[above] {$y$};
 \foreach \x in {-3,-2,-1}
 \foreach \y in {0,1,2,3}
 \draw[blue,-{Stealth[sep][sep]}] (\x,\y) -- (\x,\y+1);
 \foreach \y in {0,1,2,3}
 \draw (0,0) -- (0,4);
\foreach \y in {1,2,3}
 \foreach \x in {-4,-3,-2,-1}
 \draw[blue,-{Stealth[sep][sep]}] (\x,\y) -- (\x+1,\y);
        \draw (-4,4) -- (0,4);
        
        \foreach \x in {-3,-2,-1}
            \foreach \y in {0,1,2,3,4}
                \filldraw (\x,\y) circle (2pt);

        \foreach \x in {-4,0}
            \foreach \y in {1,2,3}
                \filldraw (\x,\y) circle (2pt);
                
        \draw[decorate,decoration={brace,amplitude=5pt,mirror},thick] (-4.05,-0.2) -- node[below=6pt] {$h$} (-2.95,-0.2);
        \draw[decorate,decoration={brace,amplitude=5pt},thick] (-4.2,0.05) -- node[left=6pt] {$h$} (-4.2,0.95);
        \node[below=3pt] at (0,0) {$1$};
        \node[left=3pt] at (-4,4) {$1$};
    \end{tikzpicture}
    \caption{The discretisation of the square $\mathcal{S} \subset \RR^2$. The edges of the graph $\mathcal{G}_h$ are coloured in blue.}
    \label{fig:discretise_square} 
\end{figure}
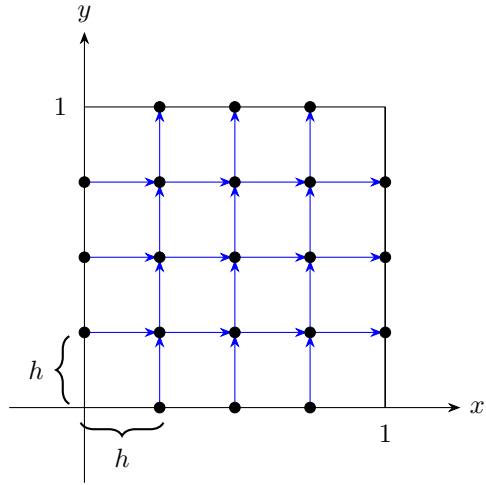

In order to obtain an oriented metric graph, we choose the orientation of the edges of $\mathcal{G}_h$ in the positive direction of each axis (see Figure \ref{fig:discretise_square}). Furthermore, we assume that digraph is unweighted, hence $W_h$ is an identity matrix. This allows us to consider the Dirichlet problem on $\mathcal{G}_h$ corresponding to the Kirchhoff Laplacian $\Delta_{\mathcal{G}_h}$ defined in Subsection \ref{ssLaplaceongraphs},
with Dirichlet boundary conditions at the exterior vertices that are now part of the operator domain:
\begin{equation}
\begin{aligned}
D\left(\Delta_{\mathcal{G}_h}\right)=&\left\{u=\left(u_j\right)_{j\in J^h} \in C(\mathcal{G}_h): u_j \in H^2(e_j),\, \forall e_j \in E_h,\right. \\
& \sum_{j\in J^h}\left(\phi_{h,ij}^{+}-\phi_{h,ij}^{-}\right) u_j^{\prime}\left(v_i\right)=0,\, \forall v_i \in V_h^{int} \text{ and }\\
& \left. u(v_i)=0,\, \forall v_i\in V_h^{ext}\right\}
\end{aligned}
\end{equation}

With this definition, one can prove that the following elliptic equation on graph $\mathcal{G}_h$ is well-posed \cite[Section 2.3]{Nic240}:

\begin{equation}
    \label{eq:dirichlet-problem-grid}
        \Delta_{\mathcal{G}_h} u_h = \frac{1}{2}f\vert_{\mathcal{G}_h}
\end{equation}
In order to state the approximation results, we need to define the $h_h^1$ and $\ell_h^\infty$ norms on the grid $\mathcal{G}_h$:
\[\begin{aligned}
\|u\|_{h_h^1}^2 &\coloneqq \sum_{j\in J^h} \int_{e_j} \left[(u_j)^2+(u_j')^2\right]{\rm d}s;\\
\|u\|_{\ell_h^\infty} &\coloneqq {\rm esssup} \{|u_h(s)|: s\in e_j\in E_h\}.
\end{aligned}
\]
In these norms, one has an approximation result of order $\sqrt{h}$, which is the subject of the main theorem in this section:
\begin{thm}\cite[Cor.~3.6]{Nic240}
Let $f\in W^{1,p}(\mathcal{S})$, with $p>2$, such that $f$ is zero at each corner of $\mathcal{S}$. Then the solution $u$ of \eqref{eq:dirichlet-problem-square} belongs to $H^3(\mathcal{S})$. Furthermore, there exists a constant $C>0$ independent of $h$ and $f$ such that, if $u_h$ is the solution of \eqref{eq:dirichlet-problem-grid}, the following error estimates hold:
\begin{align}
    \|u_h-u\vert_{\mathcal{G}_h}\|_{h_h^1}&\leq C\sqrt{h} \|u\|_{H^3(\mathcal{S})};\\
    \|u_h-u\vert_{\mathcal{G}_h}\|_{\ell^\infty}&\leq C\sqrt{h} \|u\|_{H^3(\mathcal{S})}.
\end{align}
\end{thm}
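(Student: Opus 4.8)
\emph{Overview and regularity.} The plan is to combine three ingredients: elliptic regularity on the square (which delivers the asserted $u\in H^3(\mathcal{S})$), a Lax--Milgram setting for the grid problem, and a Strang-type consistency estimate in which the factor $\tfrac12$ in \eqref{eq:dirichlet-problem-grid} is decisive. For the regularity, since $p>2$ one has $W^{1,p}(\mathcal{S})\hookrightarrow C^{0,1-2/p}(\overline{\mathcal{S}})$, so $f$ is continuous on $\overline{\mathcal{S}}$; vanishing of $f$ at the four corners is precisely the compatibility condition needed for $H^3$-regularity of the Dirichlet problem on a convex polygon with right-angle openings (the corner exponent $\pi/(\pi/2)=2$ produces only polynomial, hence smooth, corner functions, so no genuine singular part survives once the compatibility holds). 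I would invoke the polygonal-domain regularity theory (Grisvard; cf.\ the references in \cite{Nic240}) to get $u\in H^3(\mathcal{S})$, and record that $H^3(\mathcal{S})\hookrightarrow C^1(\overline{\mathcal{S}})$, so $u$ restricts to each edge $e_j$ as an $H^2(e_j)$-function and $u\vert_{\mathcal{G}_h}$ lies in $V_h^{0}:=\{v\in C(\mathcal{G}_h): v_j\in H^1(e_j)\ \forall j\in J^h,\ v(v_i)=0\ \forall v_i\in V_h^{ext}\}$ because $u=0$ on $\partial\mathcal{S}$.

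\emph{Variational setting and reduction to a consistency estimate.} Both problems have weak forms: $\int_{\mathcal{S}}\nabla u\cdot\nabla\chi\,dx=\int_{\mathcal{S}}f\,\chi\,dx$ for all $\chi\in H_0^1(\mathcal{S})$, and — after integrating \eqref{eq:dirichlet-problem-grid} by parts edge-by-edge, the Kirchhoff conditions emerging at the interior vertices — $a_h(u_h,w)=F_h(w)$ for all $w\in V_h^{0}$, where $a_h(v,w):=\sum_{j\in J^h}\int_{e_j}v_j'w_j'\,ds$ and $F_h(w):=\tfrac12\sum_{j\in J^h}\int_{e_j}f\,w_j\,ds$. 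A discrete Poincar\'e--Friedrichs inequality on $\mathcal{G}_h$ with $h$-independent constant (each point of the grid is joined to $V_h^{ext}$ by a grid path of length bounded uniformly in $h$) makes $a_h$ coercive on $V_h^{0}$ for $\|\cdot\|_{h_h^1}$, which is the well-posedness already quoted from \cite{Nic240}. Since $u_h-u\vert_{\mathcal{G}_h}\in V_h^{0}$, coercivity gives $\|u_h-u\vert_{\mathcal{G}_h}\|_{h_h^1}\le C\sup_{0\ne w\in V_h^{0}}|D(w)|/\|w\|_{h_h^1}$ with the consistency functional $D(w):=a_h(u\vert_{\mathcal{G}_h},w)-F_h(w)$. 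Integrating by parts on each edge, using that $u$ and $\nabla u$ are continuous (so the tangential derivatives of $u\vert_{\mathcal{G}_h}$ at an interior vertex cancel pairwise in the Kirchhoff sum) and that $-\Delta u=f$, one gets $D(w)=\tfrac12\sum_{j\in J^h}\sigma_j\int_{e_j}(\partial_{yy}u-\partial_{xx}u)\,w_j\,ds$ with $\sigma_j=+1$ on horizontal and $\sigma_j=-1$ on vertical edges. The coefficient $\tfrac12$ is exactly what matches $F_h$ — which carries $f$ on the horizontal \emph{and} the vertical edge through each point — with $a_h$, which on every edge carries only one component of $\nabla u$; equivalently, the $O(h^{-1})$ terms obtained by comparing the horizontal- and vertical-edge line sums with the two-dimensional Dirichlet integral cancel, leaving exactly the displayed $D(w)$.

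\emph{The consistency estimate — the main obstacle.} It remains to prove $|D(w)|\le C\sqrt{h}\,\|u\|_{H^3(\mathcal{S})}\,\|w\|_{h_h^1}$, and this is the heart of the theorem. I would split $w_j$ on each edge into its endpoint-linear interpolant plus an edge bubble vanishing at both endpoints. For the bubble part only $\partial_{xx}u,\partial_{yy}u\in H^1(\mathcal{S})$ is needed: Poincar\'e on the length-$h$ intervals gains one factor of $h$, and summing the squared edge contributions over the $\sim h^{-1}$ grid lines costs a factor $h^{-1/2}$ (a trace/sampling estimate $\|\psi\|_{L^2(\mathcal{G}_h)}^2\le Ch^{-1}\|\psi\|_{H^1(\mathcal{S})}^2$ for the restriction of an $H^1(\mathcal{S})$-function to a family of parallel lines of spacing $h$), so that part is $O(h\cdot h^{-1/2})=O(\sqrt{h})$. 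The interpolant part is the delicate one: this is where the direction-dependent sign $\sigma_j$ must be exploited, and where second derivatives of $u$ cannot be evaluated pointwise (in two dimensions $H^3\not\hookrightarrow C^2$). I would handle it cellwise with the Bramble--Hilbert lemma — each $h$-cell of the grid producing a local error of order $h$ measured against the full $H^3$-seminorm — and then pair these cell errors against $w$, again paying the $h^{-1/2}$ sampling price, now for $D^3u\in L^2(\mathcal{S})$, to land at $O(\sqrt{h}\,\|u\|_{H^3})$. This two-scale bookkeeping — local consistency of order $h$, against the $h^{-1/2}$ cost of reading a two-dimensional function on a one-dimensional skeleton — is precisely why the rate is $\sqrt{h}$ rather than the $h$ one would expect from a genuine planar discretisation. (An alternative in principle would be a separation-of-variables comparison of the two spectral problems through the transition-matrix correspondence recalled earlier in this section, but dealing with the source term and with the $\ell_h^\infty$ bound makes the variational route cleaner.)

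\emph{The $\ell_h^\infty$ bound and conclusion.} Write $\varepsilon:=u_h-u\vert_{\mathcal{G}_h}$. On each edge $e_j$ one has $\|\varepsilon\|_{L^\infty(e_j)}\le|\varepsilon(v_i)|+\sqrt{h}\,\|\varepsilon_j'\|_{L^2(e_j)}$ for an endpoint $v_i$ of $e_j$, and since $\varepsilon$ is continuous on the connected graph $\mathcal{G}_h$ and vanishes on $V_h^{ext}$, integrating along a grid path to the boundary gives $|\varepsilon(v_i)|\le C\|\varepsilon'\|_{L^2(\mathcal{G}_h)}$. Hence $\|\varepsilon\|_{\ell_h^\infty}\le C\|\varepsilon\|_{h_h^1}$, and combining with the $h_h^1$-estimate of the previous step yields both asserted bounds $\|u_h-u\vert_{\mathcal{G}_h}\|_{h_h^1}\le C\sqrt{h}\,\|u\|_{H^3(\mathcal{S})}$ and $\|u_h-u\vert_{\mathcal{G}_h}\|_{\ell_h^\infty}\le C\sqrt{h}\,\|u\|_{H^3(\mathcal{S})}$.
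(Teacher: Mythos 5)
The paper does not actually prove this theorem: it is imported verbatim from \cite[Cor.~3.6]{Nic240}, so there is no internal proof to measure you against, and your argument has to be judged on its own. On those terms your route is sound, and I could verify its main steps. The reduction is correct: $u_h-u\vert_{\mathcal{G}_h}\in V_h^0$, the discrete Poincar\'e inequality along full grid lines (each horizontal or vertical line runs between two exterior vertices where $w$ vanishes) gives coercivity with an $h$-independent constant, the vertex terms in the integration by parts cancel because $H^3(\mathcal{S})\hookrightarrow C^1(\overline{\mathcal{S}})$, and your residual identity $D(w)=\tfrac12\sum_j\sigma_j\int_{e_j}(\partial_{yy}u-\partial_{xx}u)\,w_j\,ds$ is exactly right — this is where the factor $\tfrac12$ in \eqref{eq:dirichlet-problem-grid} is used. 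The bubble estimate (edgewise Poincar\'e gaining $h$, sampling estimate losing $h^{-1/2}$) and the final $\ell_h^\infty$ bound via integration along a grid path to the boundary are fine as written.

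Two points need attention. First, your parenthetical justification of the $H^3$ regularity is off: at a right-angle corner the exponent-$2$ singular function $r^2\sin 2\theta$ is indeed the harmless polynomial $2xy$, but the actual obstruction to $H^3$ is the resonant logarithmic singularity of type $r^2(\log r\,\sin 2\theta+\theta\cos 2\theta)$, whose coefficient is proportional to the value of $f$ at the corner; the hypothesis that $f$ vanishes there is precisely what removes it. The conclusion and the appeal to Grisvard stand, but for the stated reason, not yours. Second, the interpolant part of the consistency estimate — which you rightly call the heart — is only gestured at, and it is the one step where the sign structure must actually be cashed in. To complete it along your lines: write $I_hw=\sum_{v\in V_h^{int}}w(v)\phi_v$ in the hat basis and set $g=\tfrac12(\partial_{yy}u-\partial_{xx}u)\in H^1(\mathcal{S})$; the per-vertex functional $\delta_v(g)=\int_{\mathrm{horiz}}\phi_v\,g-\int_{\mathrm{vert}}\phi_v\,g$ annihilates constants exactly because of $\sigma_j$, so Deny--Lions/Bramble--Hilbert on the $2h\times 2h$ patch $K_v$ plus scaling gives $|\delta_v(g)|\le Ch\,|g|_{H^1(K_v)}$; combining with the discrete trace bound $\sum_v h|w(v)|^2\le C\|w\|^2_{h_h^1}$, Cauchy--Schwarz and the bounded overlap of the patches yields $|\sum_v w(v)\delta_v(g)|\le C\sqrt{h}\,\|u\|_{H^3(\mathcal{S})}\|w\|_{h_h^1}$, which is the missing bound. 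With these two repairs your proof goes through; whether it coincides with the argument actually given in \cite{Nic240} cannot be determined from this paper.
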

The meaning of the aforementioned result is that the Dirichlet problem on a square in $\RR^2$ can be regarded as a limit of equations posed on a family of embedded metric graphs, as the embedding progressively encompasses the two-dimensional domain, in what we will later call the ``densification of the graph'', refer to Figure \ref{fig:diagram_up}.

For similar findings on arbitrary domains, interested readers may refer to \cite[Section 5]{Nic240}. Additionally, approximation results for the spectrum of operators on domains in $\RR^2$ with the graph spectrum of embedded networks can be found in \cite{Nic63} and the references therein. 

 
\section{Interdependence of Dynamical Network Models}\label{sec:interdep}

In this paper we have made an attempt to gather into a single overview a vast group of applied problems where dynamics is driven on network-like-structures. At first glance, this idea seems plausible to attain since mathematics operates on such a high level of generality that topics from distant research fields such as education, economy, ecology, or epidemiology, can be tackled with similar tools. Unfortunately, the above statement is true only to a certain extent. When going into details we understand that the peculiarity of each field coerces the development of tailor-made theories that cannot always be easily applied in other disciplines. As a result, even in one applied mathematical field, we develop parallel theories with different basic assumptions (related to state space, definition of solution, etc.).

In this summary, we look into the perspective of major groups of networked models considered. Having unified the language, in Section \ref{sec:prelimi}, we start tracing both differences and similarities in the models' foundations. Our goal is to sketch a map that depicts a world of dynamical systems on networks with cities/vertices being the major groups of mathematical objects considered in the field and roads/edges giving their interrelations. The text below is nothing else than a pocket guide with a description of large cities as well as known (but not necessarily main) paths in this world. Figure \ref{fig:multi-layer} can serve us as a large-scale map while Figures \ref{fig:diagram_up} and \ref{fig:diagram_down} as car GPS navigation.   

\subsection{Description of the world of networked dynamical systems} \label{subsec:world}

Let us start the construction of our world from scratch, namely from the definition of the networked model. In order to apply the network paradigm, one needs to observe a group of objects that interact with one another. These interactions constitute the backbone of every networked model. In this study we mostly indicate the difference in the role of the interaction's initiator and its recipient, hence the object that structures the model is a digraph. The topology of a network may be related to the physical location of the objects involved, and then we talk about a graph which is embedded into $\RR^n$ space. On the other hand, it may be based on any other common feature of the objects, and then the interaction does not need to have a physical representation. In order to distinguish these two cases we build a two-layered network with vertices being mathematical tools related to unembedded graphs at the lower layer and tools related to embedded networks at the upper one. 

Let $V_i$, $i=1,\ldots 5$ be vertices at the lower layer representing respectively combinatorial graphs ($V_1$), operators on combinatorial graphs ($V_2$), spectral theory of operators on combinatorial graphs $(V_3)$, PDEs on edges of metric graphs $(V_4)$ and ODEs in vertices of combinatorial graphs $(V_5)$. In the upper layer, vertices are denoted by $V_i^e$, $i=1,\ldots, 6$. In the case $i=1,\ldots, 5$ they are embedded counterparts of notions at the lower layer, but there is one additional vertex $V_6$ representing PDEs/ODEs on vector space in which the system is embedded. In Figure \ref{fig:multi-layer} all vertices at both levels are presented together with a raw sketch of relations between them. In order to understand detailed interrelations one has to go further into the picture presenting layers separately,  Figure \ref{fig:diagram_down} for the lower and Figure \ref{fig:diagram_up} for the upper layer.

We hypothesize that embedding can be regarded as an additional assumption of the model that gives rise to additional relations between entities. One can observe that Figure \ref{fig:diagram_up} includes all the elements (vertices and edges) from Figure \ref{fig:diagram_down} but is enriched with some new properties. 

Let us now characterize all inscribed relations in more detail.\newline

\textbf{Paths $V_1 - V_2 - V_3 - V_i - V_1$ (with variant $V_1 -V_2 - V_4 - V_1$), $i=4,5$.} The notion of a network comes from a set theory but Section \ref{sec:combinat} clearly shows that combinatorial digraph ($V_1$) also provides information about dynamics. From a mathematical perspective, this knowledge can be captured using operators which describe the network (e.g. incidence $\Phi^{\pm}$ or weight $W$ matrices) or some network characteristics described either in one moment in time (e.g. when the process is at equilibrium point) or over several moments (which permits describing the temporary state of the system).

In order to understand the process, better one can define more sophisticated operators on digraphs ($V_2$). They can be of different natures, either purely related to the network structure 
(see \eqref{eq:unwundiradjacency} -- \eqref{eq:laplac-beltr}) or providing additional information 
(\eqref{eq:flow-op}, \eqref{eq:LaplC_dom}, \eqref{eq:LaplC}, \eqref{eq:Lapl-Lp-dom}, \eqref{eq:Lapl-Lp}); defined in vertices (eg. \eqref{eq:adv} -- 
\eqref{eq:laplac-beltr}) or on edges (eg. \eqref{eq:flow-op}, \eqref{eq:LaplC_dom}, \eqref{eq:LaplC}, \eqref{eq:Lapl-Lp-dom}, \eqref{eq:Lapl-Lp}), etc. The properties of operators such as boundedness, self-adjointness, positivity, irreducibility, etc.,\ provide additional information about phenomena such as the connectivity of groups, symmetry, or periodicity of interactions. Furthermore, one can define a functional, called a graph measure, acting from the network either to the subset of $\RR$ in the case of global measures (e.g. reciprocity \eqref{eq:reciprocity} or modularity \eqref{eq:modularity}), or to the subset of $\RR^n$/$\RR^m$ in the case of local measures (such as Finn cycling index \eqref{eq:singlenode}
). All of these notions are examined in Section \ref{sec:combinat}.

Among all algebraic methods that serve to examine operators on networks, we emphasize spectral theory ($V_3$) which has become a standard tool in analyses of dynamical properties. On the one hand, people compare the spectrum of network operators and define graph measures based on them (eg. eigenvector centrality, PageRank). From another perspective, having network operators defined, one can consider both linear and non-linear abstract Cauchy problems (ACP) generated by them, moving smoothly from algebraic methods to pure analysis. Depending on the type of an operator, we arrive at ODEs defined in vertices, $V_4$ (Section \ref{sec:ODEs}), or PDEs defined on edges, $V_5$ (Section \ref{sec:PDEs}). 
 
 In order to return to the starting point we indicate that a well-known method for recovering the weights of initial combinatorial digraphs ($V_1$) from the observed phenomenon is examination of long-time behaviours of the process given by the Abstract Cauchy Problem or its point evaluation in time.\newline
 
\indent \textbf{Path $V_4 - V_5 - V_4$.}
The classification of dynamics in vertices and on edges is not straightforward, which has already been shown in Subsection \ref{subsec:virus2}. If we allow for inhomogeneity of objects in vertices, we introduce an additional variable $x$ extending the dimension of the vertex $v_i$, for example into dimension one in the simplest case. Then the functional space above the vertex can be defined and one can consider PDEs in vertices instead of ODEs. In this case an edge having a head in $v_j$ and a tail in $v_i$ may, for example, inform about the flow of mass from one point at a locally one-dimensional metric space in the vertex $v_i$ to another point in another vertex $v_j$, \cite{BFN2016}. This can be obviously more complex taking for instance non-local transfers, \cite{BGS2011}. This reasoning can be easily extended to vertices of arbitrary dimension.

In order to state the relation of dynamics in locally one-dimensional vertices to corresponding dynamics defined on the edges of a metric graph, one needs to ensure that it is possible to build a metric graph based on network relations, proposed by incidence matrices $\Phi^{\pm}$. Figuratively speaking, if we, for instance, associate vertex $v_i$ with interval $[0,l_i]$, then we need to “glue” the ends of this interval with the ends of other intervals related to the vertices that are adjacent to $v_i$. The endpoints of the intervals constitute new vertices and the intervals themselves become the edges of a newly defined metric graph. If such a procedure holds, we say that the problem is graph realisable, \cite{BF:15}. However, it is always possible to represent PDE on metric graph edges by the PDEs in the vertices' framework.

Additionally, it is sometimes possible to relate PDEs on metric graph's edges to ODEs in vertices by methods of aggregation such as asymptotic state lumping \cite{BFN2016}.  This process can also be interpreted as the aggregation of the process in the micro-scale into the macro model.\newline

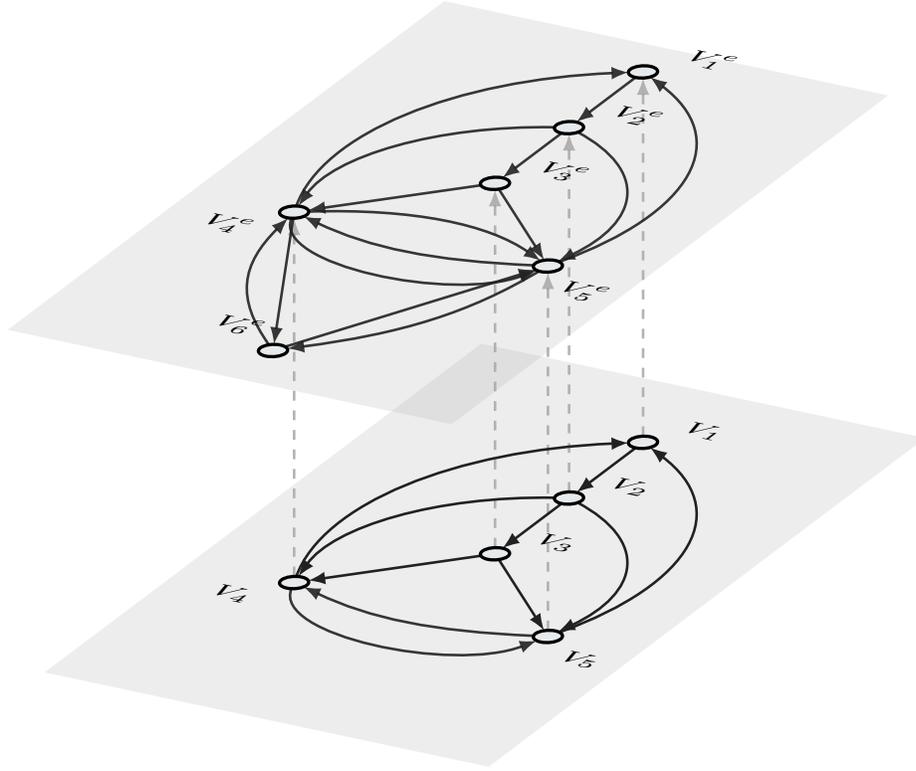
\begin{figure}[h!]
\SetCoordinates[yLength=1,xLength=1.4]
\resizebox{\textwidth}{!}{%
\begin{tikzpicture}[multilayer=3d]
\SetLayerDistance{4}
\Plane[x=-1.8,y=-.5,width=3.5,height=5.9,color={rgb, 255:red, 169; green, 169; blue, 169},NoBorder,layer=2,opacity=.2]
\Plane[x=-1.8,y=0,width=3.5,height=5.9,
color={rgb, 255:red, 169; green, 169; blue, 169},NoBorder,layer=1,opacity=.2]
\Vertices{multilayer_v2.csv}
\Edges{multilayer_e2.csv}
\end{tikzpicture}}
\caption{Two-layer digraph representing interrelations between different network models considered in the paper. Vertices $V_i$ and $V_i^e$ represent respectively unembedded notions and their embedded counterparts, namely combinatorial graphs (for $i=1$), operators on combinatorial graphs (for $i=2$), spectral theory of operators on combinatorial graphs (for $i=3$), PDEs on edges of metric graphs (for $i=4$) and ODEs in vertices of combinatorial graphs (for $i=5$). Vertex $V_6^e$ in the upper layer represents PDEs/ODEs on the whole embedded vector space, which appears only in the embedded case.}
\label{fig:multi-layer}
\end{figure}

\textbf{Path $V_i - V_i^e$, $i=1,\ldots, 5$.} Every unembedded mathematical tool considered in this study can be transformed into a tool applied to an embedded network. One needs to embed the graph structure into a larger vector space and then define the object which now has additional information about its embedding; see the definition of the embedded metric graph in Section \ref{sec:met_net}. It is worth underlining that information about the localisation in the space can be one of the parameters in a dynamical process changing its intensity, like that presented in Section \ref{subsec:traffic}.\newline

\textbf{Path $V_6^e - V_5^e - V_6^e - V_4^e - V_5^e$.} The application of embedded objects allows us to introduce geometry into the study, see the planar embedding of a digraph/metric graph in Subsection \ref{sec:met_net}, instead of giving only the weighted relation between groups of objects. Furthermore, as presented in Subsection \ref{subsec:traffic}, in many cases the dynamics depend on the geometry and consequently, PDEs on embedded edges and ODEs/PDEs in embedded vertices may constitute more complex dynamics compared to their unembedded counterparts.

{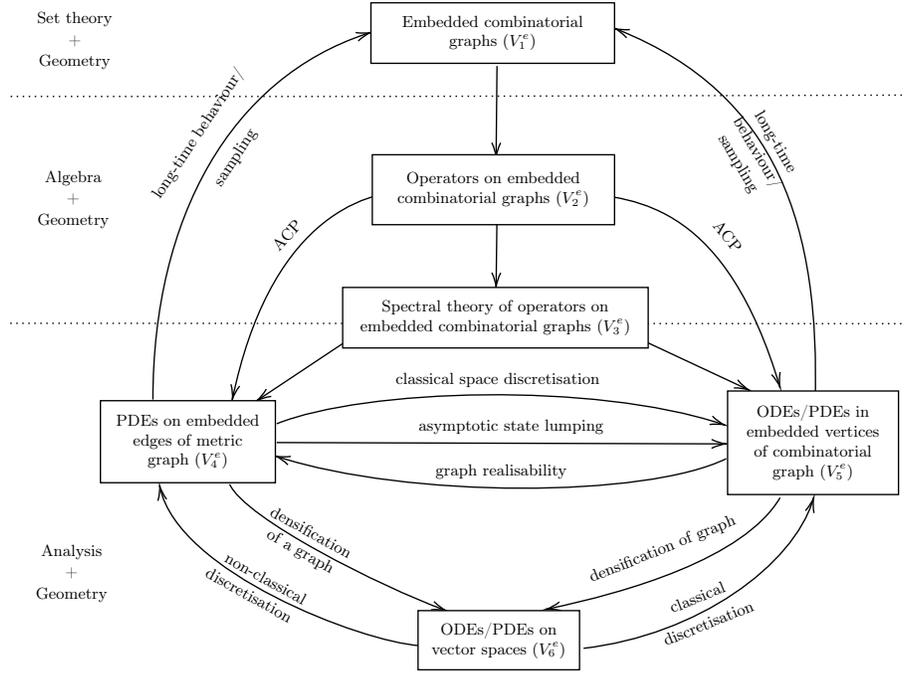
\begin{figure}[h!]
\begin{center}


\tikzset{every picture/.style={line width=0.75pt}} 

\resizebox{\textwidth}{!}{%

\tikzset{every picture/.style={line width=0.75pt}} 

\begin{tikzpicture}[x=0.75pt,y=0.75pt,yscale=-1,xscale=1]

\draw [color={rgb, 255:red, 0; green, 0; blue, 0 }  ,draw opacity=1 ] [dash pattern={on 0.84pt off 2.51pt}]  (-1.67,79.24) -- (267,79.24) -- (670,78.2) ;
\draw [color={rgb, 255:red, 0; green, 0; blue, 0 }  ,draw opacity=1 ] [dash pattern={on 0.84pt off 2.51pt}]  (-2,247.91) -- (668,248.2) ;
\draw [color={rgb, 255:red, 0; green, 0; blue, 0 }  ,draw opacity=1 ]   (262.93,32.58) .. controls (162.15,65.68) and (104,168.97) .. (104,304.6) ;
\draw [shift={(266,31.6)}, rotate = 162.69] [color={rgb, 255:red, 0; green, 0; blue, 0 }  ,draw opacity=1 ][line width=0.75]    (10.93,-3.29) .. controls (6.95,-1.4) and (3.31,-0.3) .. (0,0) .. controls (3.31,0.3) and (6.95,1.4) .. (10.93,3.29)   ;
\draw [color={rgb, 255:red, 0; green, 0; blue, 0 }  ,draw opacity=1 ]   (450.61,30.04) .. controls (568.96,77.93) and (599.96,176.92) .. (596,307.6) ;
\draw [shift={(447,28.6)}, rotate = 21.31] [color={rgb, 255:red, 0; green, 0; blue, 0 }  ,draw opacity=1 ][line width=0.75]    (10.93,-3.29) .. controls (6.95,-1.4) and (3.31,-0.3) .. (0,0) .. controls (3.31,0.3) and (6.95,1.4) .. (10.93,3.29)   ;
\draw    (296.33,149.24) .. controls (258,149.4) and (190.33,173.27) .. (163,304.6) ;
\draw [shift={(163,304.6)}, rotate = 281.76] [color={rgb, 255:red, 0; green, 0; blue, 0 }  ][line width=0.75]    (10.93,-3.29) .. controls (6.95,-1.4) and (3.31,-0.3) .. (0,0) .. controls (3.31,0.3) and (6.95,1.4) .. (10.93,3.29)   ;
\draw    (252.67,257.91) -- (182.67,304.49) ;
\draw [shift={(181,305.6)}, rotate = 326.36] [color={rgb, 255:red, 0; green, 0; blue, 0 }  ][line width=0.75]    (10.93,-3.29) .. controls (6.95,-1.4) and (3.31,-0.3) .. (0,0) .. controls (3.31,0.3) and (6.95,1.4) .. (10.93,3.29)   ;
\draw    (465,259.24) -- (545.19,296.75) ;
\draw [shift={(547,297.6)}, rotate = 205.07] [color={rgb, 255:red, 0; green, 0; blue, 0 }  ][line width=0.75]    (10.93,-3.29) .. controls (6.95,-1.4) and (3.31,-0.3) .. (0,0) .. controls (3.31,0.3) and (6.95,1.4) .. (10.93,3.29)   ;
\draw    (418,150.91) .. controls (466,154.4) and (530,161.6) .. (570,295.6) ;
\draw [shift={(570,295.6)}, rotate = 253.38] [color={rgb, 255:red, 0; green, 0; blue, 0 }  ][line width=0.75]    (10.93,-3.29) .. controls (6.95,-1.4) and (3.31,-0.3) .. (0,0) .. controls (3.31,0.3) and (6.95,1.4) .. (10.93,3.29)   ;
\draw [color={rgb, 255:red, 0; green, 0; blue, 0 }  ,draw opacity=1 ]   (161,367.6) .. controls (172.88,393.34) and (272.97,443.6) .. (320.57,461.08) ;
\draw [shift={(322,461.6)}, rotate = 199.86] [color={rgb, 255:red, 0; green, 0; blue, 0 }  ,draw opacity=1 ][line width=0.75]    (10.93,-3.29) .. controls (6.95,-1.4) and (3.31,-0.3) .. (0,0) .. controls (3.31,0.3) and (6.95,1.4) .. (10.93,3.29)   ;
\draw [color={rgb, 255:red, 0; green, 0; blue, 0 }  ,draw opacity=1 ]   (302,487.6) .. controls (258.44,484.63) and (123.08,432.98) .. (109.38,370.5) ;
\draw [shift={(109,368.6)}, rotate = 79.85] [color={rgb, 255:red, 0; green, 0; blue, 0 }  ,draw opacity=1 ][line width=0.75]    (10.93,-3.29) .. controls (6.95,-1.4) and (3.31,-0.3) .. (0,0) .. controls (3.31,0.3) and (6.95,1.4) .. (10.93,3.29)   ;
\draw [color={rgb, 255:red, 0; green, 0; blue, 0 }  ,draw opacity=1 ]   (424,489.6) .. controls (509.14,479.7) and (579.58,427.98) .. (594.56,379.08) ;
\draw [shift={(595,377.6)}, rotate = 105.84] [color={rgb, 255:red, 0; green, 0; blue, 0 }  ,draw opacity=1 ][line width=0.75]    (10.93,-3.29) .. controls (6.95,-1.4) and (3.31,-0.3) .. (0,0) .. controls (3.31,0.3) and (6.95,1.4) .. (10.93,3.29)   ;
\draw [color={rgb, 255:red, 0; green, 0; blue, 0 }  ,draw opacity=1 ]   (570,377.6) .. controls (542.94,423.14) and (455.77,448.1) .. (397.75,460.24) ;
\draw [shift={(396,460.6)}, rotate = 348.31] [color={rgb, 255:red, 0; green, 0; blue, 0 }  ,draw opacity=1 ][line width=0.75]    (10.93,-3.29) .. controls (6.95,-1.4) and (3.31,-0.3) .. (0,0) .. controls (3.31,0.3) and (6.95,1.4) .. (10.93,3.29)   ;
\draw    (359.67,56.58) -- (359.02,122.58) ;
\draw [shift={(359,124.58)}, rotate = 270.56] [color={rgb, 255:red, 0; green, 0; blue, 0 }  ][line width=0.75]    (10.93,-3.29) .. controls (6.95,-1.4) and (3.31,-0.3) .. (0,0) .. controls (3.31,0.3) and (6.95,1.4) .. (10.93,3.29)   ;
\draw    (359.67,167.24) -- (359.02,219.24) ;
\draw [shift={(359,221.24)}, rotate = 270.71] [color={rgb, 255:red, 0; green, 0; blue, 0 }  ][line width=0.75]    (10.93,-3.29) .. controls (6.95,-1.4) and (3.31,-0.3) .. (0,0) .. controls (3.31,0.3) and (6.95,1.4) .. (10.93,3.29)   ;
\draw    (197.87,347.53) .. controls (293.23,377.94) and (469.28,378.14) .. (530,348.6) ;
\draw [shift={(195,346.6)}, rotate = 18.31] [color={rgb, 255:red, 0; green, 0; blue, 0 }  ][line width=0.75]    (10.93,-3.29) .. controls (6.95,-1.4) and (3.31,-0.3) .. (0,0) .. controls (3.31,0.3) and (6.95,1.4) .. (10.93,3.29)   ;
\draw    (196,336.6) -- (528,337.59) ;
\draw [shift={(530,337.6)}, rotate = 180.17] [color={rgb, 255:red, 0; green, 0; blue, 0 }  ][line width=0.75]    (10.93,-3.29) .. controls (6.95,-1.4) and (3.31,-0.3) .. (0,0) .. controls (3.31,0.3) and (6.95,1.4) .. (10.93,3.29)   ;
\draw [color={rgb, 255:red, 0; green, 0; blue, 0 }  ,draw opacity=1 ]   (526.15,322.64) .. controls (431.22,291.44) and (262.11,296.4) .. (196,322.6) ;
\draw [shift={(529,323.6)}, rotate = 198.87] [color={rgb, 255:red, 0; green, 0; blue, 0 }  ,draw opacity=1 ][line width=0.75]    (10.93,-3.29) .. controls (6.95,-1.4) and (3.31,-0.3) .. (0,0) .. controls (3.31,0.3) and (6.95,1.4) .. (10.93,3.29)   ;

\draw (14,15) node [anchor=north west][inner sep=0.75pt]   [align=left] {\begin{minipage}[lt]{45.58pt}\setlength\topsep{0pt}
\begin{center}
{\footnotesize Set theory}\\{\footnotesize + Geometry}
\end{center}

\end{minipage}};
\draw (13,133) node [anchor=north west][inner sep=0.75pt]   [align=left] {\begin{minipage}[lt]{45.58pt}\setlength\topsep{0pt}
\begin{center}
{\footnotesize Algebra}\\{\footnotesize + Geometry}
\end{center}

\end{minipage}};
\draw  [fill={rgb, 255:red, 255; green, 255; blue, 255 }  ,fill opacity=1 ]  (267,122.59) -- (447,122.59) -- (447,174.59) -- (267,174.59) -- cycle  ;
\draw (357,148.59) node  [font=\footnotesize] [align=left] {\begin{minipage}[lt]{119.68pt}\setlength\topsep{0pt}
\begin{center}
Operators on\textcolor[rgb]{0.56,0.07,1}{ } embedded\\combinatorial graphs ($\displaystyle V_{2}^{e}$)
\end{center}

\end{minipage}};
\draw (99.67,151.36) node [anchor=north west][inner sep=0.75pt]  [color={rgb, 255:red, 0; green, 0; blue, 0 }  ,opacity=1 ,rotate=-303.41] [align=left] {{\footnotesize long-time behaviour/}};
\draw  [fill={rgb, 255:red, 255; green, 255; blue, 255 }  ,fill opacity=1 ]  (65,305.47) -- (195,305.47) -- (195,366.47) -- (65,366.47) -- cycle  ;
\draw (130,335.97) node  [font=\footnotesize] [align=left] {\begin{minipage}[lt]{85.68pt}\setlength\topsep{0pt}
\begin{center}
PDEs on\textcolor[rgb]{0.56,0.07,1}{ } embedded\\edges of metric \\graph ($\displaystyle V_{4}^{e})$
\end{center}

\end{minipage}};
\draw (145.87,142.54) node [anchor=north west][inner sep=0.75pt]  [color={rgb, 255:red, 0; green, 0; blue, 0 }  ,opacity=1 ,rotate=-303.01] [align=left] {{\footnotesize sampling}};
\draw (11.67,411) node [anchor=north west][inner sep=0.75pt]   [align=left] {\begin{minipage}[lt]{45.58pt}\setlength\topsep{0pt}
\begin{center}
{\footnotesize Analysis }\\{\footnotesize + Geometry}
\end{center}

\end{minipage}};
\draw  [fill={rgb, 255:red, 255; green, 255; blue, 255 }  ,fill opacity=1 ]  (531,298.26) -- (658,298.26) -- (658,375.26) -- (531,375.26) -- cycle  ;
\draw (594.5,336.76) node  [font=\footnotesize] [align=left] {\begin{minipage}[lt]{83.64pt}\setlength\topsep{0pt}
\begin{center}
ODEs/PDEs in\textcolor[rgb]{0.56,0.07,1}{ }\\embedded\textcolor[rgb]{0.56,0.07,1}{ } vertices\\ of combinatorial \\graph $\displaystyle \left( V_{5}^{e}\right)$
\end{center}

\end{minipage}};
\draw  [color={rgb, 255:red, 0; green, 0; blue, 0 }  ,draw opacity=1 ][fill={rgb, 255:red, 255; green, 255; blue, 255 }  ,fill opacity=1 ]  (301,461.58) -- (421,461.58) -- (421,505.58) -- (301,505.58) -- cycle  ;
\draw (361,483.58) node  [font=\footnotesize] [align=left] {\begin{minipage}[lt]{78.88pt}\setlength\topsep{0pt}
\begin{center}
ODEs/PDEs on \\vector spaces ($\displaystyle V_{6}^{e})$
\end{center}

\end{minipage}};
\draw  [fill={rgb, 255:red, 255; green, 255; blue, 255 }  ,fill opacity=1 ]  (245,221.26) -- (472,221.26) -- (472,266.26) -- (245,266.26) -- cycle  ;
\draw (358.5,243.76) node  [font=\footnotesize] [align=left] {\begin{minipage}[lt]{151.64pt}\setlength\topsep{0pt}
\begin{center}
Spectral theory of operators on\textcolor[rgb]{0.56,0.07,1}{ }\\embedded\textcolor[rgb]{0.56,0.07,1}{ } combinatorial graphs ($\displaystyle V_{3}^{e})$
\end{center}

\end{minipage}};
\draw (545.33,64.31) node [anchor=north west][inner sep=0.75pt]  [color={rgb, 255:red, 0; green, 0; blue, 0 }  ,opacity=1 ,rotate=-59.37] [align=left] {\begin{minipage}[lt]{76.66pt}\setlength\topsep{0pt}
\begin{center}
{\footnotesize long-time behaviour/}\\{\footnotesize  sampling }
\end{center}

\end{minipage}};
\draw (189.61,189.01) node [anchor=north west][inner sep=0.75pt]  [rotate=-312.86] [align=left] {{\footnotesize ACP}};
\draw (526.2,168.25) node [anchor=north west][inner sep=0.75pt]  [rotate=-50.74] [align=left] {{\footnotesize ACP}};
\draw (197.59,374.34) node [anchor=north west][inner sep=0.75pt]  [color={rgb, 255:red, 0; green, 0; blue, 0 }  ,opacity=1 ,rotate=-29] [align=left] {\begin{minipage}[lt]{50.35pt}\setlength\topsep{0pt}
\begin{center}
{\footnotesize densification of a graph}
\end{center}

\end{minipage}};
\draw (160,415.66) node [anchor=north west][inner sep=0.75pt]  [color={rgb, 255:red, 0; green, 0; blue, 0 }  ,opacity=1 ,rotate=-29] [align=left] {{\footnotesize non-classical}};
\draw (145.77,426.78) node [anchor=north west][inner sep=0.75pt]  [color={rgb, 255:red, 0; green, 0; blue, 0 }  ,opacity=1 ,rotate=-29] [align=left] {{\footnotesize discretisation}};
\draw (485.04,455) node [anchor=north west][inner sep=0.75pt]  [color={rgb, 255:red, 0; green, 0; blue, 0 }  ,opacity=1 ,rotate=-338] [align=left] {{\footnotesize classical}};
\draw (480.24,479.2) node [anchor=north west][inner sep=0.75pt]  [color={rgb, 255:red, 0; green, 0; blue, 0 }  ,opacity=1 ,rotate=-338] [align=left] {{\footnotesize discretisation}};
\draw (425.12,432.05) node [anchor=north west][inner sep=0.75pt]  [color={rgb, 255:red, 0; green, 0; blue, 0 }  ,opacity=1 ,rotate=-338] [align=left] {{\footnotesize densification of graph}};
\draw (282.97,283.65) node [anchor=north west][inner sep=0.75pt]  [color={rgb, 255:red, 0; green, 0; blue, 0 }  ,opacity=1 ] [align=left] {{\footnotesize classical space discretisation}};
\draw (312.67,351.58) node [anchor=north west][inner sep=0.75pt]   [align=left] {{\footnotesize graph realisability}};
\draw (300,318.24) node [anchor=north west][inner sep=0.75pt]   [align=left] {{\footnotesize asymptotic state lumping}};
\draw  [fill={rgb, 255:red, 255; green, 255; blue, 255 }  ,fill opacity=1 ]  (266,9.59) -- (447,9.59) -- (447,54.59) -- (266,54.59) -- cycle  ;
\draw (356.5,32.09) node  [font=\footnotesize] [align=left] {\begin{minipage}[lt]{120.36pt}\setlength\topsep{0pt}
\begin{center}
Embedded combinatorial graphs ($\displaystyle V_{1}^{e}$) 
\end{center}

\end{minipage}};

\end{tikzpicture}

}


\caption{The upper layer in the two-layer digraph given in Figure \ref{fig:multi-layer}. Diagram illustrates interrelations between different mathematical tools considered for embedded networks. Different paths in the graph have been presented in Section \ref{subsec:world}.} 
\label{fig:diagram_up}
\end{center}
\end{figure}}
{\begin{figure}[ht!]
\begin{center}


\tikzset{every picture/.style={line width=0.75pt}} 

\resizebox{\textwidth}{!}{%

\tikzset{every picture/.style={line width=0.75pt}} 

\begin{tikzpicture}[x=0.75pt,y=0.75pt,yscale=-1,xscale=1]

\draw [color={rgb, 255:red, 0; green, 0; blue, 0 }  ,draw opacity=1 ] [dash pattern={on 0.84pt off 2.51pt}]  (-1.67,79.24) -- (267,79.24) -- (670,78.2) ;
\draw [color={rgb, 255:red, 0; green, 0; blue, 0 }  ,draw opacity=1 ] [dash pattern={on 0.84pt off 2.51pt}]  (-2,247.91) -- (668,248.2) ;
\draw [color={rgb, 255:red, 0; green, 0; blue, 0 }  ,draw opacity=1 ]   (262.93,32.58) .. controls (162.15,65.68) and (104,168.97) .. (104,304.6) ;
\draw [shift={(266,31.6)}, rotate = 162.69] [color={rgb, 255:red, 0; green, 0; blue, 0 }  ,draw opacity=1 ][line width=0.75]    (10.93,-3.29) .. controls (6.95,-1.4) and (3.31,-0.3) .. (0,0) .. controls (3.31,0.3) and (6.95,1.4) .. (10.93,3.29)   ;
\draw [color={rgb, 255:red, 0; green, 0; blue, 0 }  ,draw opacity=1 ]   (450.61,30.04) .. controls (568.96,77.93) and (599.96,176.92) .. (596,307.6) ;
\draw [shift={(447,28.6)}, rotate = 21.31] [color={rgb, 255:red, 0; green, 0; blue, 0 }  ,draw opacity=1 ][line width=0.75]    (10.93,-3.29) .. controls (6.95,-1.4) and (3.31,-0.3) .. (0,0) .. controls (3.31,0.3) and (6.95,1.4) .. (10.93,3.29)   ;
\draw    (296.33,149.24) .. controls (258,149.4) and (190.33,173.27) .. (163,304.6) ;
\draw [shift={(163,304.6)}, rotate = 281.76] [color={rgb, 255:red, 0; green, 0; blue, 0 }  ][line width=0.75]    (10.93,-3.29) .. controls (6.95,-1.4) and (3.31,-0.3) .. (0,0) .. controls (3.31,0.3) and (6.95,1.4) .. (10.93,3.29)   ;
\draw    (252.67,257.91) -- (182.67,304.49) ;
\draw [shift={(181,305.6)}, rotate = 326.36] [color={rgb, 255:red, 0; green, 0; blue, 0 }  ][line width=0.75]    (10.93,-3.29) .. controls (6.95,-1.4) and (3.31,-0.3) .. (0,0) .. controls (3.31,0.3) and (6.95,1.4) .. (10.93,3.29)   ;
\draw    (465,259.24) -- (545.19,296.75) ;
\draw [shift={(547,297.6)}, rotate = 205.07] [color={rgb, 255:red, 0; green, 0; blue, 0 }  ][line width=0.75]    (10.93,-3.29) .. controls (6.95,-1.4) and (3.31,-0.3) .. (0,0) .. controls (3.31,0.3) and (6.95,1.4) .. (10.93,3.29)   ;
\draw    (418,150.91) .. controls (466,154.4) and (530,161.6) .. (570,295.6) ;
\draw [shift={(570,295.6)}, rotate = 253.38] [color={rgb, 255:red, 0; green, 0; blue, 0 }  ][line width=0.75]    (10.93,-3.29) .. controls (6.95,-1.4) and (3.31,-0.3) .. (0,0) .. controls (3.31,0.3) and (6.95,1.4) .. (10.93,3.29)   ;
\draw    (359.67,56.58) -- (359.02,122.58) ;
\draw [shift={(359,124.58)}, rotate = 270.56] [color={rgb, 255:red, 0; green, 0; blue, 0 }  ][line width=0.75]    (10.93,-3.29) .. controls (6.95,-1.4) and (3.31,-0.3) .. (0,0) .. controls (3.31,0.3) and (6.95,1.4) .. (10.93,3.29)   ;
\draw    (359.67,167.24) -- (359.02,219.24) ;
\draw [shift={(359,221.24)}, rotate = 270.71] [color={rgb, 255:red, 0; green, 0; blue, 0 }  ][line width=0.75]    (10.93,-3.29) .. controls (6.95,-1.4) and (3.31,-0.3) .. (0,0) .. controls (3.31,0.3) and (6.95,1.4) .. (10.93,3.29)   ;
\draw    (197.87,347.53) .. controls (293.23,377.94) and (469.28,378.14) .. (530,348.6) ;
\draw [shift={(195,346.6)}, rotate = 18.31] [color={rgb, 255:red, 0; green, 0; blue, 0 }  ][line width=0.75]    (10.93,-3.29) .. controls (6.95,-1.4) and (3.31,-0.3) .. (0,0) .. controls (3.31,0.3) and (6.95,1.4) .. (10.93,3.29)   ;
\draw    (196,336.6) -- (528,337.59) ;
\draw [shift={(530,337.6)}, rotate = 180.17] [color={rgb, 255:red, 0; green, 0; blue, 0 }  ][line width=0.75]    (10.93,-3.29) .. controls (6.95,-1.4) and (3.31,-0.3) .. (0,0) .. controls (3.31,0.3) and (6.95,1.4) .. (10.93,3.29)   ;

\draw (16,16) node [anchor=north west][inner sep=0.75pt]   [align=left] {\begin{minipage}[lt]{39.92pt}\setlength\topsep{0pt}
\begin{center}
{\footnotesize Set theory}
\end{center}

\end{minipage}};
\draw (16,135) node [anchor=north west][inner sep=0.75pt]   [align=left] {\begin{minipage}[lt]{30.85pt}\setlength\topsep{0pt}
\begin{center}
{\footnotesize Algebra}
\end{center}

\end{minipage}};
\draw  [fill={rgb, 255:red, 255; green, 255; blue, 255 }  ,fill opacity=1 ]  (267,122.59) -- (447,122.59) -- (447,174.59) -- (267,174.59) -- cycle  ;
\draw (357,148.59) node  [font=\footnotesize] [align=left] {\begin{minipage}[lt]{119.68pt}\setlength\topsep{0pt}
\begin{center}
Operators on\textcolor[rgb]{0.56,0.07,1}{ } embedded\\combinatorial graphs ($\displaystyle V_{2}^{e}$)
\end{center}

\end{minipage}};
\draw (99.67,151.36) node [anchor=north west][inner sep=0.75pt]  [color={rgb, 255:red, 0; green, 0; blue, 0 }  ,opacity=1 ,rotate=-303.41] [align=left] {{\footnotesize long-time behaviour/}};
\draw  [fill={rgb, 255:red, 255; green, 255; blue, 255 }  ,fill opacity=1 ]  (65,305.47) -- (195,305.47) -- (195,366.47) -- (65,366.47) -- cycle  ;
\draw (130,335.97) node  [font=\footnotesize] [align=left] {\begin{minipage}[lt]{85.68pt}\setlength\topsep{0pt}
\begin{center}
PDEs on\textcolor[rgb]{0.56,0.07,1}{ } embedded\\edges of metric \\graph ($\displaystyle V_{4}^{e})$
\end{center}

\end{minipage}};
\draw (145.87,142.54) node [anchor=north west][inner sep=0.75pt]  [color={rgb, 255:red, 0; green, 0; blue, 0 }  ,opacity=1 ,rotate=-303.01] [align=left] {{\footnotesize sampling}};
\draw (15.67,385) node [anchor=north west][inner sep=0.75pt]   [align=left] {\begin{minipage}[lt]{35.38pt}\setlength\topsep{0pt}
\begin{center}
{\footnotesize Analysis }
\end{center}

\end{minipage}};
\draw  [fill={rgb, 255:red, 255; green, 255; blue, 255 }  ,fill opacity=1 ]  (531,298.26) -- (658,298.26) -- (658,375.26) -- (531,375.26) -- cycle  ;
\draw (594.5,336.76) node  [font=\footnotesize] [align=left] {\begin{minipage}[lt]{83.64pt}\setlength\topsep{0pt}
\begin{center}
ODEs/PDEs in\textcolor[rgb]{0.56,0.07,1}{ }\\embedded\textcolor[rgb]{0.56,0.07,1}{ } vertices\\ of combinatorial \\graph $\displaystyle \left( V_{5}^{e}\right)$
\end{center}

\end{minipage}};
\draw  [fill={rgb, 255:red, 255; green, 255; blue, 255 }  ,fill opacity=1 ]  (245,221.26) -- (472,221.26) -- (472,266.26) -- (245,266.26) -- cycle  ;
\draw (358.5,243.76) node  [font=\footnotesize] [align=left] {\begin{minipage}[lt]{151.64pt}\setlength\topsep{0pt}
\begin{center}
Spectral theory of operators on\textcolor[rgb]{0.56,0.07,1}{ }\\embedded\textcolor[rgb]{0.56,0.07,1}{ } combinatorial graphs ($\displaystyle V_{3}^{e})$
\end{center}

\end{minipage}};
\draw (545.33,64.31) node [anchor=north west][inner sep=0.75pt]  [color={rgb, 255:red, 0; green, 0; blue, 0 }  ,opacity=1 ,rotate=-59.37] [align=left] {\begin{minipage}[lt]{76.66pt}\setlength\topsep{0pt}
\begin{center}
{\footnotesize long-time behaviour/}\\{\footnotesize  sampling }
\end{center}

\end{minipage}};
\draw (189.61,189.01) node [anchor=north west][inner sep=0.75pt]  [rotate=-312.86] [align=left] {{\footnotesize ACP}};
\draw (526.2,168.25) node [anchor=north west][inner sep=0.75pt]  [rotate=-50.74] [align=left] {{\footnotesize ACP}};
\draw (312.67,351.58) node [anchor=north west][inner sep=0.75pt]   [align=left] {{\footnotesize graph realisability}};
\draw (300,318.24) node [anchor=north west][inner sep=0.75pt]   [align=left] {{\footnotesize asymptotic state lumping}};
\draw  [fill={rgb, 255:red, 255; green, 255; blue, 255 }  ,fill opacity=1 ]  (266,9.59) -- (447,9.59) -- (447,54.59) -- (266,54.59) -- cycle  ;
\draw (356.5,32.09) node  [font=\footnotesize] [align=left] {\begin{minipage}[lt]{120.36pt}\setlength\topsep{0pt}
\begin{center}
Embedded combinatorial graphs ($\displaystyle V_{1}^{e}$) 
\end{center}

\end{minipage}};

\end{tikzpicture}

}


\caption{The lower layer in the two-layer digraph given in Figure \ref{fig:multi-layer}. The diagram illustrates interrelations between different mathematical tools considered for unembedded networks. Different paths in the graph have been presented in Section \ref{subsec:world}.} 
\label{fig:diagram_down}
\end{center}
\end{figure}
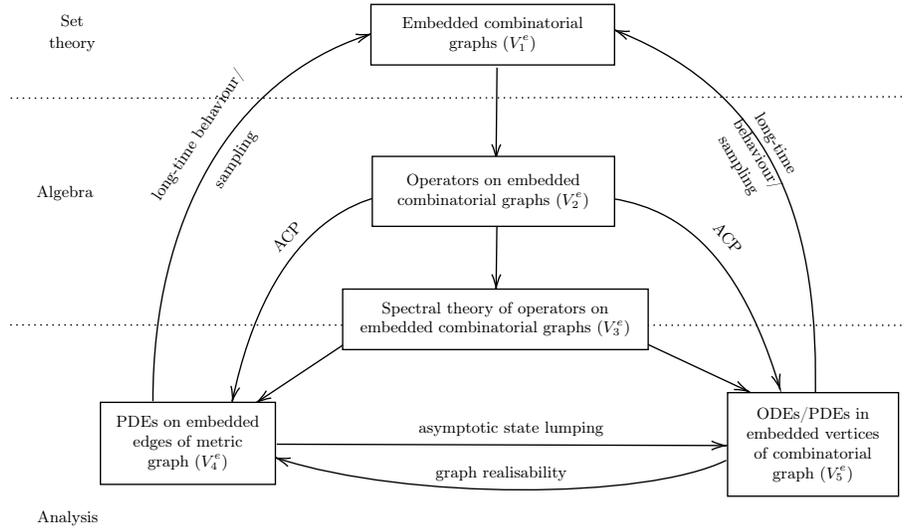}

Another advantage is the fact that by adding the geometry, one can consider the relation between the considered tool and the reference vector space in which this tool is embedded. Namely, a network of PDEs ($V_4^e$) can be regarded as a specific discretisation of the system of PDEs defined on the whole vector space $V_6^e$, as shown in Subsection \ref{subsec:approx}. Starting from a planar system one can choose for example standard discretization to obtain a set of vertices in which the system evolves, hence the model on vertices $V_5^e$. On the other hand, using non-standard discretisation by describing evolution along locally one-dimensional spaces we arrive at a metric graph model. Finally, discretizing the latter by considering the dynamical process at vertices of a metric graph only, we return to the combinatorial graph system.

\section{Conclusions}
To the best of our knowledge, this is the first paper that encompasses such a broad perspective on the world of dynamical systems on networks. Apart from analytical tools based on differential equations, we include statistical and numerical network methods closely used with real applied problems. Noticing the common aims and unifying a framework for further study is the first step on the way 
to building coherent mathematical theory. 

The breadth of this paper' perspective shows how the diversity of real-world problems related to dynamical systems on networks can be answered with a broad range of mathematical frameworks. Going too much into detail risks losing the bird's-eye view, but being too general we miss details that are crucial to understand interrelations between objects. Consequently, in this study we have chosen to present a selected group of topics for the broad field, going into the details to some extent to underline the interrelations.

It is not surprising that the world that we describe is convoluted, multilayered, and unclear at first glance. This follows from the complexity of the problems concerned. Nevertheless, the possibility of building first bridges between theories represented graphically in Figures \ref{fig:diagram_up}-\ref{fig:diagram_down}-\ref{fig:multi-layer} is a promising outcome and can serve as a stimulus for further study.

\bibliographystyle{plain}
\bibliography{joint_bib2}

@String { fm      = {Forum Math.} }

@String { siam    = {SIAM} }

@article {ADK:14,
    AUTHOR = {Arendt, W. and Dier, D. and Kramar Fijav\v{z},
              M.},
     TITLE = {Diffusion in networks with time-dependent transmission
              conditions},
   JOURNAL = {Appl. Math. Optim.},
  FJOURNAL = {Applied Mathematics and Optimization},
    VOLUME = {69},
      YEAR = {2014},
    NUMBER = {2},
     PAGES = {315--336},
      ISSN = {0095-4616,1432-0606},
   MRCLASS = {35R02 (35B40 35K15 35K51)},
  MRNUMBER = {3175197},
MRREVIEWER = {Claudio\ Marchi},
       DOI = {10.1007/s00245-013-9225-1},
       URL = {https://doi.org/10.1007/s00245-013-9225-1},
}

@article{BFN2016,
author = {Banasiak, J. and Falkiewicz, A. and Namayanja, P.},
title = {Asymptotic state lumping in transport and diffusion problems on networks with applications to population problems},
journal = {Math. Models Methods Appl. Sci.},
volume = {26},
pages = { 215-247},
year = {2016},
doi = {10.1142/S0218202516400017}
}

@article{BGS2011,
author={Banasiak, J. and Goswami, A. and Shindin, S.},
title={Aggregation in age and space structured opulation models: an asymptotic analysis approach},
journal={J. Evol. Equ.},
volume={11},
pages={ 121–154},
year={2011},
}

@article{Dewey,
author={Dewey, D. P.},
year={2017},
title={ Measuring social interaction during study abroad: Quantitative methods and challenges},
journal={System},
volume={71},
pages={49-59},
doi={ 10.1016/j.system.2017.09.026},
}

@article{Epskamp,
author={Epskamp, S. and Borsboom, D. and Fried, E. I.},
year={2018},
title={Estimating psychological networks and their accuracy: A tutorial paper},
journal={Behav. Res. Methods},
volume={50},
pages={195–212},
doi={10.3758/s13428-017-0862-1},
}

@article{Freeborn,
author={Freeborn, L. and Andringa, S. and Lunansky, G. and Rispens, J.},
year={2022},
title={Network analysis for modeling complex systems in SLA research},
journal={Stud. Second Lang. Acquis.},
volume={44},
issue={4},
pages={1–32},
doi={10.1017/S0272263122000407},
}

@book{Hasegawa,
author={Hasegawa, A.},
year={2019},
title={The social lives of study abroad: Understanding second language learners’ experiences through social network analysis and conversation analysis},
publisher={Routledge},
doi={10.4324/9780429505829},
}

@article{Jarynowski,
author={Jarynowski, A. and Paradowski, M. B. and Buda, A.},
year={2019},
title={Modelling communities and populations: An introduction to computational social science},
journal={Studia Metodologiczne – Dissertationes Methodologicae},
volume={39},
pages={123–152},
url={https://hdl.handle.net/10593/25476},
}

@inbook{Kennedy,
author={Kennedy Terry, K.},
year={2022},
title={Social networks},
booktitle={The Routledge handbook of sociolinguistics and second language acquisition},
chapter={9},
publisher={Routledge},
doi={10.4324/9781003017325-11}}

@book{Mitchell2,
author={Mitchell, R. F. and Tracy–Ventura, N. and McManus, K.},
year={2017},
title={Anglophone students abroad: Identity, social relationships and language learning},
publisher={Routledge},
doi={10.4324/9781315194851},
}

@article{finn,
	author = {Finn, J. T.},
	journal = {J. Theor. Biol.},
	pages = "363--380",
	title = "{Measures of ecosystem structure and function derived from analysis of flows}",
	volume = "56",
	year = "1976"
}

@article{MatIsk_cycling,
author = {Iskrzyński, M. and Janssen, F. and Picciolo, F. and Fath, B. and Ruzzenenti, F.},
title = {Cycling and reciprocity in weighted food webs and economic networks},
journal = {J. Ind. Ecol.},
volume = {26},
pages = {838-849},
keywords = {cycling, economic networks, food webs, industrial ecology, network analysis, reciprocity},
doi = {https://doi.org/10.1111/jiec.13217},
url = {https://onlinelibrary.wiley.com/doi/abs/10.1111/jiec.13217},
eprint = {https://onlinelibrary.wiley.com/doi/pdf/10.1111/jiec.13217},
year = {2022}
}

@article{MatIsk_foodwebviz,
author = {Pawluczuk, Ł. and Iskrzyński, M.},
title = {Food web visualisation: Heat map, interactive graph and animated flow network},
journal = {Methods Ecol. Evol.},
volume = {14},
pages = {57-64},
keywords = {food web, network visualisation, ecological network, animated flow network, interactive graph, trophic level, flow matrix heat map},
doi = {https://doi.org/10.1111/2041-210X.13839},
url = {https://besjournals.onlinelibrary.wiley.com/doi/abs/10.1111/2041-210X.13839},
eprint = {https://besjournals.onlinelibrary.wiley.com/doi/pdf/10.1111/2041-210X.13839},
year = {2023}
}

@book{Mitchell1,
author={Mitchell, R. and Tracy-Ventura, N. and McManus, K.},
year={2015},
title={Social interaction, identity and language learning during residence abroad},
publisher={The European Second Language Association},
url={http://www.eurosla.org/monographs/EM04/EM04tot.pdf}
}

@article{Brodka,
author={Paradowski, M. B. and Whitby, N. and Czuba, M. and Bródka, P.},
year={2024},
title={Peer interaction dynamics and second language learning trajectories during study abroad: A longitudinal investigation using dynamic computational Social Network Analysis},
journal={Lang. Learn.},
volume={74},
issue={S2},
pages={58–115},
doi={10.1111/lang.12681},
url={https://doi.org/10.1111/lang.12681}
}

@article{Cierpich,
author={Paradowski, M. B. and Cierpich-Kozieł and A., Chen, C.-C. and Ochab, J. K.},
year={2022},
title={How output outweighs input and interlocutors matter for study-abroad SLA: Computational Social Network Analysis of learner interactions},
journal={Mod. Lang. J.},
volume={106},
issue={4},
pages={694–725},
doi={10.1111/modl.12811},
url={https://doi.org/10.1111/modl.12811}
}

@incollection{Paradowski1,
author={Paradowski, M. B. and Jarynowski, A. and Czopek, K. and Jelińska, M.},
year={2021},
title={Peer interactions and second language learning: The contributions of Social Network Analysis in Study Abroad vs At-Home environments},
editor={R. Mitchell and H. Tyne},
booktitle={Language, Mobility and Study Abroad in the Contemporary European Context},
pages={99–116},
publisher={Routledge},
doi={10.4324/9781003087953-10},
crossref={MitchellTyne},
url={https://doi.org/10.4324/9781003087953-10}
}

@book{MitchellTyne,
    year={2021},
    editor={R. Mitchell and H. Tyne},
    booktitle = {Language, Mobility and Study Abroad in the Contemporary European Context},
    publisher = {Routledge}
}

@article{Paradowski2,
author={Paradowski, M. B. and Jarynowski, A. and Jelińska, M. and Czopek, K.},
year={2021},
title={Out-of-class peer interactions matter for second language acquisition during short-term overseas sojourns: The contributions of Social Network Analysis [Selected poster presentations from the American Association of Applied Linguistics conference, Denver, USA, March 2020]},
journal={Lang. Teach.},
volume={54},
issue={1},
pages={139–143},
doi={10.1017/S0261444820000580},
url={https://doi.org/10.1017/S0261444820000580}
}

@article{Jelinska,
author={Paradowski, M. B. and Jelińska, M.},
year={2024},
title={The predictors of L2 grit and their complex interactions in online foreign language learning: Motivation, self-directed learning, autonomy, curiosity, and language mindsets},
journal={Comput. Assist. Lang. Learn.},
doi={10.1080/09588221.2023.2192762},
volume={37},
issue={8},
pages={2320–2358},
doi={10.1080/09588221.2023.2192762},
url={https://doi.org/10.1080/09588221.2023.2192762}
}

@article{Jonak,
author={Paradowski, M. B. and Jonak, Ł.},
year={2012},
title={Diffusion of linguistic innovation as social coordination},
journal={Psychol. Lang. Commun.},
volume={16},
issue={2},
pages={53–64},
doi={10.2478/v10057-012-0010-z},
url={https://doi.org/10.2478/v10057-012-0010-z}
}

@article{Cinelli,
author = {Cinelli, M. and Ferraro, G. and Iovanella A. and Rotundo, G.},
pages = {1-25},
title = {Assessing the impact of incomplete information on the resilience of financial networks},
issn = {1572-9338},
journal = {Ann. Oper. Res.},
doi = {10.1007/s10479-019-03306-y},
year = {2019},
wosId = {WOS:000636306600029},
scopusId = {2-s2.0-85069212440},
}

@article{GRAMDJBRLevantesi,
author = {D'Arcangelis, A. M. and Levantesi, S. and Rotundo, G.},
pages = {687-702},
title = {A complex networks approach to pension funds},
issn = {0148-2963},
journal = {J. Bus. Res},
doi = {10.1016/j.jbusres.2019.10.071},
year = {2021},
wosId = {WOS:000639120000061},
scopusId = {2-s2.0-85076590804},
}

@article(below:85,
    author  = {J. von {B}elow},
    title   = {A characteristic equation associated to an eigenvalue problem
 on $c^2$-networks},
    journal = {Linear Algebra and Appl.},
    year     = 1985,
    volume  = 71,
    pages   = {309--325})

@book {BKFR:17,
    AUTHOR = {B\'{a}tkai, A. and  Kramar Fijav\v{z}, M. and
              Rhandi, A.},
     TITLE = {Positive operator semigroups: From finite to infinite dimensions},
    SERIES = {Operator Theory: Advances and Applications},
    VOLUME = {257},
 PUBLISHER = {Birkh\"{a}user/Springer, Cham},
      YEAR = {2017},
     PAGES = {xvii+364},
      ISBN = {978-3-319-42811-6; 978-3-319-42813-0},
   MRCLASS = {47-02 (15-02 47B65 47D06)},
  MRNUMBER = {3616245},
MRREVIEWER = {Christoph\ Kriegler},
       DOI = {10.1007/978-3-319-42813-0},
       URL = {https://doi.org/10.1007/978-3-319-42813-0},
}

@article {BF:15,
    AUTHOR = {Banasiak, J. and Falkiewicz, A.},
     TITLE = {Some transport and diffusion processes on networks and their
              graph realizability},
   JOURNAL = {Appl. Math. Lett.},
    VOLUME = {45},
      YEAR = {2015},
     PAGES = { 25--30},
}

@book {BaJeGu:01,
    AUTHOR = {Bang-Jensen, J. and Gutin, G.},
     TITLE = {Digraphs},
    SERIES = {Springer Monographs in Mathematics},
      NOTE = {Theory, algorithms and applications},
 PUBLISHER = {Springer-Verlag London, Ltd., London},
      YEAR = {2001},
     PAGES = {xxii+754},
      ISBN = {1-85233-268-9},
   MRCLASS = {05-01 (05C20 68-01 68R10 68W05)},
  MRNUMBER = {1798170},
MRREVIEWER = {Yubao\ Guo},
}

@article {BN:14,
    AUTHOR = {Banasiak, J. and Namayanja, P.},
     TITLE = {Asymptotic behaviour of flows on reducible networks},
   JOURNAL = {Netw. Heterog. Media},
VOLUME = {9},
      YEAR = {2014},
     PAGES = { 197--216},
}

@article{bkf-23,
      title = {Well-posedness of non-autonomous transport equation on metric graphs},
    JOURNAL = {Semigroup Forum},
     author = {Budde, Ch. and Kramar Fijavž, M.},
       year = {2024},
        doi = {10.1007/s00233-024-10417-x},
        url = {https://doi.org/10.1007/s00233-024-10417-x},
month = {03},
pages = {1-16},
}

@article {Hayeketall-20,
    AUTHOR = {Hayek, A. and Nicaise, S. and Salloum, Z. and Wehbe,
              A.},
     TITLE = {Existence, uniqueness and stabilization of solutions of a
              generalized telegraph equation on star shaped networks},
   JOURNAL = {Acta Appl. Math.},
  FJOURNAL = {Acta Applicandae Mathematicae},
    VOLUME = {170},
      YEAR = {2020},
     PAGES = {823--851},
      ISSN = {0167-8019},
   MRCLASS = {35R11},
  MRNUMBER = {4163264},
       DOI = {10.1007/s10440-020-00360-8},
       URL = {https://doi.org/10.1007/s10440-020-00360-8},
}

@article {EKF:19,
    AUTHOR = {Engel, K.-J. and Kramar Fijav\v{z}, M.},
     TITLE = {Waves and diffusion on metric graphs with general vertex
              conditions},
   JOURNAL = {Evol. Equ. Control Theory},
  FJOURNAL = {Evolution Equations and Control Theory},
    VOLUME = {8},
      YEAR = {2019},
      PAGES = {633--661},
      ISSN = {2163-2472,2163-2480},
   MRCLASS = {34G10 (34B45 35J15 35R02 47D06)},
  MRNUMBER = {3985968},
MRREVIEWER = {Valentin\ Keyantuo},
       DOI = {10.3934/eect.2019030},
       URL = {https://doi.org/10.3934/eect.2019030},
}

@article {EKF:22,
    AUTHOR = {Engel, K.-J. and Kramar Fijav\v{z}, M.},
     TITLE = {Flows on metric graphs with general boundary conditions},
   JOURNAL = {J. Math. Anal. Appl.},
  FJOURNAL = {Journal of Mathematical Analysis and Applications},
    VOLUME = {513},
      YEAR = {2022},
     PAGES = {Paper No. 126214, 27},
      ISSN = {0022-247X,1096-0813},
   MRCLASS = {47D06 (34B45 34G10)},
  MRNUMBER = {4405438},
       DOI = {10.1016/j.jmaa.2022.126214},
       URL = {https://doi.org/10.1016/j.jmaa.2022.126214},
}

@Book{EN:06,
 Author = {Engel, Klaus-Jochen and Nagel, Rainer},
 Title = {A short course on operator semigroups},
 FSeries = {Universitext},
 Series = {Universitext},
 ISSN = {0172-5939},
 ISBN = {0-387-31341-9},
 Year = {2006},
 Publisher = {New York, NY: Springer},
 Language = {English},
 Keywords = {47-01,47D03,47D06,34G10},
 zbMATH = {5051510},
 Zbl = {1106.47001}
}

@article {KS23,
    AUTHOR = {Kov\'{a}cs, M. and Sikolya, E.},
     TITLE = {On the parabolic {C}auchy problem for quantum graphs with
              vertex noise},
   JOURNAL = {Electron. J. Probab.},
  FJOURNAL = {Electronic Journal of Probability},
    VOLUME = {28},
      YEAR = {2023},
     PAGES = {Paper No. 74, 20},
      ISSN = {1083-6489},
   MRCLASS = {60H15 (35R60 47D06 81Q35)},
  MRNUMBER = {4596349},
       DOI = {10.1214/23-ejp962},
       URL = {https://doi.org/10.1214/23-ejp962},
}

@article {KS:05,
    AUTHOR = {Kramar, M. and Sikolya, E.},
     TITLE = {Spectral properties and asymptotic periodicity of flows in
              networks},
   JOURNAL = {Math. Z.},
  FJOURNAL = {Mathematische Zeitschrift},
    VOLUME = {249},
      YEAR = {2005},
     PAGES = {139--162},
      ISSN = {0025-5874},
   MRCLASS = {47N20 (05C99 35F15 47D06 90B10)},
  MRNUMBER = {2106975},
MRREVIEWER = {Giuseppe Maria Coclite},
       DOI = {10.1007/s00209-004-0695-3},
       URL = {https://doi.org/10.1007/s00209-004-0695-3},
}

@article {KMS:07,
    AUTHOR = {Kramar Fijav{\v{z}}, M. and Mugnolo, D. and Sikolya,
              E.},
     TITLE = {Variational and semigroup methods for waves and diffusion in
              networks},
   JOURNAL = {Appl. Math. Optim.},
  FJOURNAL = {Applied Mathematics and Optimization. An International Journal
              with Applications to Stochastics},
    VOLUME = {55},
      YEAR = {2007},
     PAGES = {219--240},
      ISSN = {0095-4616},
     CODEN = {AMOMBN},
   MRCLASS = {34G10 (34B45 35K05 35L05 47D06 47D09)},
  MRNUMBER = {2305092},
MRREVIEWER = {Nicole Turb{\'e}},
       DOI = {10.1007/s00245-006-0887-9},
       URL = {http://dx.doi.org/10.1007/s00245-006-0887-9},
}

@article {KMN:21,
    AUTHOR = {Kramar Fijav\v{z}, M. and Mugnolo, D. and Nicaise, S.},
     TITLE = {Linear hyperbolic systems on networks: well-posedness and
              qualitative properties},
   JOURNAL = {ESAIM Control Optim. Calc. Var.},
  FJOURNAL = {ESAIM. Control, Optimisation and Calculus of Variations},
    VOLUME = {27},
      YEAR = {2021},
     PAGES = {Paper No. 7, 46 pp.},
      ISSN = {1292-8119},
   MRCLASS = {35L40 (35R02 47D06 47N20 81Q35)},
  MRNUMBER = {4223869},
       DOI = {10.1051/cocv/2020091},
       URL = {https://doi.org/10.1051/cocv/2020091},
}

@article {KP:20,
    AUTHOR = {Kramar Fijav\v{z}, M. and Puchalska, A.},
     TITLE = {Semigroups for dynamical processes on metric graphs},
   JOURNAL = {Philos. Trans. Roy. Soc. A},
  FJOURNAL = {Philosophical Transactions of the Royal Society A.
              Mathematical, Physical and Engineering Sciences},
    VOLUME = {378},
      YEAR = {2020},
      PAGES = {20190619, 16},
      ISSN = {1364-503X,1471-2962},
   MRCLASS = {47D06 (35Q92 37E25 47N20 92D10)},
  MRNUMBER = {4176394},
MRREVIEWER = {Marko\ Kosti\'{c}},
       DOI = {10.1098/rsta.2019.0619},
       URL = {https://doi.org/10.1098/rsta.2019.0619},
}

@ARTICLE{MS:07,
  author = {T. M\'atrai and E. Sikolya},
  year = 2007,
  title = {Asymptotic behavior of flows in networks},
  journal = fm,
  volume = 19,
  pages = {429--461}
}

@book {Mugnolo:14,
    AUTHOR = {Mugnolo, D.},
     TITLE = {Semigroup methods for evolution equations on networks},
    SERIES = {Understanding Complex Systems},
 PUBLISHER = {Springer, Cham},
      YEAR = {2014},
     PAGES = {xvi+286},
      ISBN = {978-3-319-04620-4; 978-3-319-04621-1},
   MRCLASS = {34B45 (34G10 35R02)},
  MRNUMBER = {3243602},
MRREVIEWER = {Bruce A. Watson},
       DOI = {10.1007/978-3-319-04621-1},
       URL = {https://doi.org/10.1007/978-3-319-04621-1},
}

@incollection {nicaise:85,
    AUTHOR = {Nicaise, S.},
     TITLE = {Some results on spectral theory over networks, applied to
              nerve impulse transmission},
 BOOKTITLE = {Orthogonal polynomials and applications ({B}ar-le-{D}uc,
              1984)},
    SERIES = {Lecture Notes in Math.},
    VOLUME = {1171},
     PAGES = {532--541},
 PUBLISHER = {Springer, Berlin},
      YEAR = {1985},
   MRCLASS = {94C05 (92A09 94A11)},
  MRNUMBER = {839024},
       DOI = {10.1007/BFb0076584},
       URL = {https://doi.org/10.1007/BFb0076584},
}

@article {nic:hyperbolic,
    AUTHOR = {Nicaise, S.},
     TITLE = {Control and stabilization of {$2\times 2$} hyperbolic systems
              on graphs},
   JOURNAL = {Math. Control Relat. Fields},
  FJOURNAL = {Mathematical Control and Related Fields},
    VOLUME = {7},
      YEAR = {2017},
     PAGES = {53--72},
}

@ARTICLE{KotsialosPapageorgiou,
  author={Kotsialos, A. and Papageorgiou, M. and Diakaki, C. and Pavlis, Y. and Middelham, F.},
  journal={	IEEE Trans. Intell. Transp. Syst.},
  title={Traffic flow modeling of large-scale motorway networks using the macroscopic modeling tool METANET},
year={2002},
  volume={3},
  pages={282-292},
  doi={10.1109/TITS.2002.806804}}

@INPROCEEDINGS{Payne1971,
  author={Payne, H. J.},
  booktitle={Mathematical Models
of Public Systems, Simulation Council Proceedings},
  title={Models of freeway traffic and control},
  year={1971},
  volume={},
  number={},
  pages={51 - 61}}

@book {Whitham1999,
    AUTHOR = {Whitham, G. B.},
     TITLE = {Linear and nonlinear waves},
    SERIES = {Pure and Applied Mathematics (New York)},
      NOTE = {Reprint of the 1974 original,
              A Wiley-Interscience Publication},
 PUBLISHER = {John Wiley \& Sons, Inc., New York},
      YEAR = {1999},
     PAGES = {xviii+636},
      ISBN = {0-471-35942-4},
   MRCLASS = {35-01 (35Lxx 35Qxx 76B15 76B25 76D33)}
}

@misc{vandissel2024paynewhitham,
    AUTHOR={Cartier van Dissel, M. and Gora, P. and Manea, D.},
      TITLE={A {Payne}-{Whitham} model of urban traffic networks in the presence of traffic lights and its application to traffic optimisation}, 
      year={2024},
      howpublished={https://arxiv.org/abs/2401.04436},
      eprint={2401.04436},
      archivePrefix={arXiv},
      primaryClass={math.AP}
}

@Book{Chung1997,
  Title                    = {Spectral graph theory},
  Author                   = {Chung, F. R.},
  Publisher                = {American Mathematical Soc. Providence, RI},
  Year                     = {1997},
  Volume                   = {92}
}

@Article{Budisic2012,
  Title                    = {{Applied Koopmanism}},
  Author                   = {Budi{\v{s}}i{\'c}, M. and Mohr, R. and Mezi{\'c}, I.},
  Journal                  = {Chaos},
  Year                     = {2012},
  Pages                    = {047510--047510},
  Volume                   = {22},

  File                     = {:Articles\\Budisic_Koopman.pdf:PDF},
  Publisher                = {American Institute of Physics}
}

@Article{Koopman1931,
  Title                    = {{Hamiltonian systems and transformation in Hilbert space}},
  Author                   = {Koopman, B. O.},
  Journal                  = {Proceedings of the National Academy of Sciences of the United States of America},
  Year                     = {1931},
  Pages                    = {315},
  Volume                   = {17},

  File                     = {:Articles\\Koopman.pdf:PDF},
  Publisher                = {National Academy of Sciences}
}

@Article{Mauroy2017,
  author    = {Mauroy, A. and Hendrickx, J. M.},
  title     = {Spectral identification of networks using sparse measurements},
  journal   = {SIAM J. Appl. Math.},
  year      = {2017},
  volume    = {16},
  pages     = {479--513},
  owner     = {Alex},
  timestamp = {2016.01.19},
}

@InProceedings{Gulina2022,
  author    = {Gulina, M. and Mauroy, A.},
  booktitle = {Proceedings of the 25th International Symposium on Mathematical Theory of Networks and Systems},
  title     = {Spectral identification of networks with generalized diffusive coupling},
  year      = {2022},
}

@inproceedings{chen2007algorithms,
  title={Algorithms for the Traffic Light Setting Problem on the Graph Model},
  author={Chen, S. W. and Yang, C. B. and Peng, Y. H.},
  booktitle={Proceedings of the 12th Conference on Artificial Intelligence and Applications},
  year={2007},
  organization={TAAI}
}

@Article{Tu2014,
  Title                    = {On dynamic mode decomposition: Theory and applications},
  Author                   = {Tu, J. H. and Rowley, C. W. and Luchtenburg, D. M. and Brunton, S. L. and Kutz, J. N.},
  Journal                  = {	J. Comput. Dyn.},
  Year                     = {2014},
  Pages                    = {391 -- 421},
  Volume                   = {1},

  File                     = {:Articles\\Tu2.pdf:PDF}
}

@article {Nic240,
    AUTHOR = {Bourlard-Jospin, M. and Nicaise, S. and Venel,
              J.},
     TITLE = {Approximation of the two-dimensional {D}irichlet problem by
              continuous and discrete problems on one-dimensional networks},
   JOURNAL = {Confluentes Math.},
  FJOURNAL = {Confluentes Mathematici},
    VOLUME = {7},
      YEAR = {2015},
      PAGES = {13--33},
      ISSN = {1793-7434},
   MRCLASS = {65N30 (35J25 35R02 65L10 65L60)},
  MRNUMBER = {3407728},
MRREVIEWER = {Carlos\ A.\ de Moura},
       DOI = {10.5802/cml.16},
       URL = {https://doi.org/10.5802/cml.16},
}

@article {Nic63,
    AUTHOR = {Nicaise, S. and Penkin, O.},
     TITLE = {Relationship between the lower frequency spectrum of plates
              and networks of beams},
   JOURNAL = {Math. Methods Appl. Sci.},
  FJOURNAL = {Mathematical Methods in the Applied Sciences},
    VOLUME = {23},
      YEAR = {2000},
     PAGES = {1389--1399},
      ISSN = {0170-4214,1099-1476},
   MRCLASS = {74H45 (74K10 74K20)},
  MRNUMBER = {1785593},
       DOI = {10.1002/1099-1476(20001110)23:16<1389::AID-MMA171>3.0.CO;2-K},
       URL =
              {https://doi.org/10.1002/1099-1476(20001110)23:16<1389::AID-MMA171>3.0.CO;2-K},
}

@article{Bayazit_2013,
   title={Asymptotic periodicity of flows in time-depending networks},
   volume={8},
   ISSN={1556-181X},
   url={http://dx.doi.org/10.3934/nhm.2013.8.843},
   DOI={10.3934/nhm.2013.8.843},
   journal={Netw. Heterog. Media},
   publisher={American Institute of Mathematical Sciences (AIMS)},
   author={Bayazit, F. and Dorn, B. and  Kramar Fijavž, M.},
   year={2013},
   pages={843–855} }

@misc{LNP_2024,
   title={Impulsive vs network transport
- the first approach},
      author={Lonc, A. and Nicaise, S. and Puchalska, A.},
   year={2024} }

@Book{Barrat_zbMATH05377449,
 Author = {Barrat, A. and Barth{\'e}lemy, M. and Vespignani, A.},
 Title = {Dynamical processes on complex networks.},
 ISBN = {978-0-521-87950-7},
 Year = {2008},
 Publisher = {Cambridge University Press},
 Language = {English},
 DOI = {10.1017/CBO9780511791383},
 Keywords = {90-02,90B10,37N25,92D30,91D30,60J60},
 zbMATH = {5377449},
 Zbl = {1198.90005}
}

@article{NAKIC20229110,
title = {Numerically efficient ${H}_\infty$ analysis of cooperative multi-agent systems},
journal = {J. Frank. Inst.},
volume = {359},
pages = {9110-9128},
year = {2022},
issn = {0016-0032},
doi = {https://doi.org/10.1016/j.jfranklin.2022.09.013},
url = {https://www.sciencedirect.com/science/article/pii/S0016003222006561},
author = {I. Naki\'{c} and D. Toli\'{c} and Z. Tomljanovi\'{c} and I. Palunko},
}

@book{lewis2013cooperative,
  title={Cooperative control of multi-agent systems: optimal and adaptive design approaches},
  author={Lewis, F. L. and Zhang, H. and Hengster-Movric, K. and Das, A.},
  year={2013},
  publisher={Springer Science \& Business Media}
}

@book{dullerud2013course,
  title={A course in robust control theory: a convex approach},
  author={Dullerud, G. E. and Paganini, F.},
  volume={36},
  year={2013},
  publisher={Springer Science \& Business Media}
}

@Article{hannon1973structure,
  author    = {Hannon, B.},
  title     = {The structure of ecosystems},
  journal   = {J. Theor. Biol.},
  year      = {1973},
  volume    = {41},
  pages     = {535--546},
  publisher = {Elsevier},
}

@Article{LEONTIEF1991181,
  author  = {Leontief, W.},
  title   = {The economy as a circular flow},
  journal = {	Struct. Chang. Econ. Dyn.},
  year    = {1991},
  volume  = {2},
  pages   = {181 - 212},
  doi     = {https://doi.org/10.1016/0954-349X(91)90012-H},
  issn    = {0954-349X},
}

@Article{ghisellini2016review,
  author    = {Ghisellini, P. and Cialani, C. and Ulgiati, S.},
  title     = {A review on circular economy: the expected transition to a balanced interplay of environmental and economic systems},
  journal   = {J. Clean. Prod.},
  year      = {2016},
  volume    = {114},
  pages     = {11--32},
  publisher = {Elsevier},
}

@Book{webster2017circular,
  title     = {The circular economy: A wealth of flows},
  publisher = {Ellen MacArthur Foundation Publishing},
  year      = {2017},
  author    = {Webster, K.},
}

@article{Layton2012,
	author = {Layton, A. and Reap, J. and Bras, B. and Weissburg, M.},
	doi = {10.1371/journal.pone.0051841},
	file = {:C$\backslash$:/Users/Mateusz.Oz/Dropbox/ToRead/Layton{\_}journal.pone.0051841.PDF:PDF},
	issn = {19326203},
	journal = {PLoS ONE},
	pages = {1--7},
	pmid = {23251638},
	title = {{Correlation between Thermodynamic Efficiency and Ecological Cyclicity for Thermodynamic Power Cycles}},
	volume = {7},
	year = {2012}
}

@article{Layton2016Bras,
author = {Layton, A. and Bras, B. and Weissburg, M.},
doi = {10.1021/acs.est.6b03066},
file = {:C\:/Users/Mateusz/Documents/FrancoFreek/Literature/new/Layton-et-al-2016-EST.pdf:pdf},
issn = {15205851},
journal = {	Environ. Sci. Technol.},
pages = {11243--11252},
pmid = {27611963},
title = {{Designing Industrial Networks Using Ecological Food Web Metrics}},
volume = {50},
year = {2016}
}

@article{Layton2016,
	author = {Layton, A. and Bras, B. and Weissburg, M.},
	doi = {10.1115/1.4033689},
	file = {:C$\backslash$:/Users/Mateusz.Oz/Dropbox/ToRead/Layton{\_}manu{\_}138{\_}10{\_}101002.pdf:pdf},
	issn = {15288935},
	journal = {J. Manuf. Sci. Eng.},
	pages = {1--12},
	title = {Ecological Principles and Metrics for Improving Material Cycling Structures in Manufacturing Networks},
	volume = {138},
	year = {2016}
}

@article{schwarz1997implementing,
  title={Implementing nature's lesson: the industrial recycling network enhancing regional development},
  author={Schwarz, E. J. and Steininger, K. W.},
  journal={J. Clean. Prod.},
  volume={5},
  number={1-2},
  pages={47--56},
  year={1997},
  publisher={Elsevier}
}

@book{FathLimits,
	author = {Jørgensen, S. E. and Fath, B. and Nielsen, S. and Pulselli, F. and Fiscus, D. and Bastianoni, S.},
	year = {2015},
	month = {08},
	pages = {},
	title = {Flourishing Within Limits to Growth Following nature's way},
	isbn = {PB 978-1-13-884 253-3 HB 978-1-13-884252-6},
	journal = {Flourishing Within Limits to Growth: Following Nature's way},
	doi = {10.4324/9781315731445},
publisher={Earth Scan}
}

@article{erdHos1960evolution,
  title={On the evolution of random graphs},
  author={Erd{\H{o}}s, P. and R{\'e}nyi, A. and others},
  journal={Publ. math. inst. hung. acad. sci},
  volume={5},
  pages={17--60},
  year={1960}
}

@article{milgram1967small,
  title={The small world problem},
  author={Milgram, S.},
  journal={Psychol. Today},
  volume={2},
  pages={60--67},
  year={1967},
  publisher={New York}
}

@incollection{travers1977experimental,
  title={An experimental study of the small world problem},
  author={Travers, J. and Milgram, S.},
  booktitle={Social networks},
  pages={179--197},
  year={1977},
  publisher={Elsevier}
}

@article{albert2002statistical,
  title={Statistical mechanics of complex networks},
  author={Albert, R. and Barab{\'a}si, A.-L.},
  journal={Rev. Mod. Phys.},
  volume={ 74 },
  pages={ 47 },
  year={2002}}

@misc{Bloomberg,
  author={Bloomberg database},
howpublished={https://www.bloomberg.com/professional/products/bloomberg-terminal},
  year={2018}
}

@article{k-shells,
  title={Identification of influential spreaders in complex networks},
  author={Kitsak, M. and others},
  journal={Nature Phys.},
  volume={6},
  pages={888},
  year={2010}
}

@article{Louvain,
author = {Blondel, V. and Guillaume, J.-L. and Lambiotte, R. and Lefebvre, E.},
year = {2008},
title = {Fast Unfolding of Communities in Large Networks},
volume = {2008},
journal = {	J. Stat. Mech.: Theory Exp.},
doi = {10.1088/1742-5468/2008/10/P10008}
}

@article{von2007tutorial,
  title={A tutorial on spectral clustering},
  author={Von Luxburg, U.},
  journal={Statistics and computing},
  volume={17},
  pages={395--416},
  year={2007},
  publisher={Springer}
}
\end{document}